\def\pb{\pagebreak}  
\def\beq{\begin{equation} }\def\eeq{\end{equation} }\def\ep{\varepsilon}\def\1{\mathbf{1}}
\numberwithin{equation}{section}
\newtheorem{lemma}{Lemma}
\newtheorem{theorem}{Theorem}
\newtheorem{proposition}{Proposition}
\newtheorem{definition}{Definition}
\newtheorem{remark}{Remark}
\newtheorem{assumption}{Assumption}
\newcommand{\bT}{\top}
\newcommand{\cD}{\mathcal{D}}
\newcommand{\cO}{\mathcal{O}}
\newcommand{\cH}{\bm{\mathcal{H}}}
\newcommand{\EE}{\mathbb{E}}
\newcommand{\RR}{\mathbb{R}}
\newcommand{\xb}{\mathbf{x}}
\newcommand{\bH}{\mathbf{H}}
\newcommand{\I}{\mathbf{I}}
\newcommand{\cF}{{\bm{\mathcal{F}}}}
\newcommand{\PP}{\mathbb{P}}
\newcommand{\ud}{d}
\newcommand{\ty}{\tilde{\y}}
\newcommand{\argmin}{\mathop{\mathrm{argmin}}}
\newcommand{\minimize}{\mathop{\mathrm{minimize}}}
\newcommand{\bbx}{\bar{\x}}
\newcommand{\cI}{\mathcal{I}}
\newcommand{\tC}{\tilde{C}}
\def\cK{\mathcal{K}}
\newcommand{\tO}{\tilde{\cO}}
\def\ep{\epsilon}
\newcommand{\vv}{\mathbf{v}}
\newcommand{\x}{\mathbf{x}}
\newcommand{\y}{\mathbf{y}}
\newcommand{\z}{\mathbf{z}}
\renewcommand{\b}{\mathbf{b}}
\newcommand{\A}{\mathbf{A}}
\newcommand{\cA}{\mathcal{A}}
\newcommand{\w}{\mathbf{w}}
\newcommand{\E}{\mathbb{E}}
\newcommand{\cP}{\bm{\mathcal{P}}}
\renewcommand{\a}{\mathbf{a}}
\newcommand{\e}{\mathbf{e}}
\newcommand{\<}{\left\langle}
\renewcommand{\>}{\right\rangle}
\def\NEON{\textsc{Neon}}
\def\SPIDER{\textsc{Spider}}
\newcommand{\bzeta}{\bm{\zeta}}
\begin{document}
\title{
Sharp Analysis for Nonconvex SGD Escaping from Saddle Points
}

\author{
Cong Fang
\thanks{
email: fangcong@pku.edu.cn
}
\and
Zhouchen Lin
\thanks{
	email:  zlin@pku.edu.cn
}
\and
Tong Zhang
\thanks{
	email: tongzhang@tongzhang-ml.org
}
}
\date{\today}

\maketitle

\begin{abstract}
In this paper, we give a sharp analysis\footnote{``Sharp analysis'' does not mean that our result is the tightest. It means an improved analysis.} for  Stochastic Gradient Descent (SGD)  and  prove that SGD is able to efficiently escape from  saddle points and find an $(\ep, \cO(\ep^{0.5}))$-approximate second-order stationary point in $\tilde{\cO}(\epsilon^{-3.5})$ stochastic gradient computations for generic nonconvex optimization problems, when the objective function satisfies  gradient-Lipschitz, Hessian-Lipschitz, and dispersive noise assumptions.
This  result subverts the classical belief that  SGD requires at least $\cO(\epsilon^{-4})$ stochastic gradient computations for obtaining an $(\ep,\cO(\ep^{0.5}))$-approximate second-order stationary point.
Such SGD rate matches, up to a polylogarithmic factor of problem-dependent parameters, the rate of most accelerated  nonconvex stochastic optimization algorithms that adopt additional techniques, such as Nesterov's momentum acceleration, negative curvature search, as well as quadratic and cubic regularization tricks.
Our novel analysis gives new insights into nonconvex SGD and can be potentially generalized to a broad class of stochastic optimization algorithms.
\end{abstract}


\def\epH{\delta}

\newcommand{\bchi}{\bm{\chi}}
\newcommand{\bxi}{\bm{\xi}}
\newcommand{\bgamma}{\bm{\gamma}}
\newcommand{\bpsi}{\bm{\psi}}
\newcommand{\bep}{\bm{\ep}}
\def\epH{\delta_2}
\def\radius{B}
\def\Ko{K_{o}}
\def\g{\mathbf{g}}
\newcommand{\uu}{\mathbf{u}}
\renewcommand{\a}{\mathbf{a}}
\renewcommand{\u}{\mathbf{u}}
\renewcommand{\v}{\mathbf{v}}
\newcommand{\Ib}{\mathbf{I}}
\newcommand{\Hb}{\mathbf{H}}
\def\Db{\mathbf{D}}
\newcommand{\cB}{\mathcal{B}}
\newcommand{\sK}{\mathscr{K}}
\newcommand{\cS}{\mathcal{S}}

\def\redd#1{\textcolor{orange}{#1}}

\section{Introduction}\label{sec:intro}
Nonconvex stochastic  optimization  is crucial in machine learning and have attracted tremendous attentions and unprecedented popularity.
 Lots of modern tasks that include low-rank matrix factorization/completion and principal component analysis \citep{candes2009exact,jolliffe2011principal}, dictionary learning \citep{sun2017complete}, Gaussian mixture models \citep{reynolds2000speaker}, as well as notably deep  neural networks \citep{hinton2006reducing} are formulated as nonconvex stochastic optimization problems.
In this paper, we concentrate on finding an approximate solution to the following minimization problem:
\beq\label{opt_eq}
\minimize_{\xb\in\RR^d} ~~~
f(\xb)
  \equiv
\EE_{\bzeta\sim \cD}  \left[ F(\xb; \bzeta) \right]
.
\eeq
Here, $F(\x; \bzeta)$ denotes a family of stochastic functions indexed by some random variable $\bzeta$ that obeys some prescribed distribution $\cD$, and we consider the  general case where $f(\x)$ and $F(\x;\bzeta)$ have Lipschitz-continuous gradients and Hessians and might be \textit{nonconvex}. 
In empirical risk minimization tasks, $\bzeta$ is an  uniformly discrete distribution over the set of training sample indices, and the stochastic function $F(\xb; \bzeta)$ corresponds to the nonconvex loss associated with such a sample.

One of the classical algorithms for optimizing \eqref{opt_eq} is the \textbf{Stochastic Gradient Descent} (SGD) method, which performs descent updates iteratively via the inexpensive stochastic gradient $\nabla F(\x; \bzeta)$ that serves as an unbiased estimator of (the inaccessible) gradient $\nabla F(\x)$ \citep{robbins1951stochastic,bottou2008tradeoffs}, i.e.~$\EE_{\bzeta\sim \cD} \left[ \nabla f(\xb;\bzeta) \right]  =  \nabla f(\xb)$.
Let $\eta$ denote the positive  stepsize, then at steps $t=1,2,\ldots$, the iteration performs the following update:
\beq\label{SGD}
\xb^t 
= 
\xb^{t-1} - \eta \nabla F(\xb^{t-1}; \bzeta^{t})
,
\eeq
where $\bzeta^{t}$ is randomly sampled at iteration $t$.
SGD admits perhaps the simplest update rule among stochastic first-order methods.
See Algorithm \ref{algo:SGDmeta} for a formal illustration of the meta algorithm.
It has gained tremendous popularity due to its exceptional practical performance.
Taking the example of training deep neural networks, the dominating algorithm at present time is SGD \citep{abadi2016tensorflow}, where the stochastic gradient is computed via one backpropagation step.
Superior characteristics of SGD have been observed in many empirical studies, including but \textit{not} limited to fast convergence, desirable solutions of low training loss, as well as its generalization ability.

Turning to the theoretical side, relatively mature and concrete analysis in existing literatures \citet{rakhlin2012making, agarwal2009information} show that SGD achieves an \textit{optimal} rate of convergence for convex objective function under some standard regime.
Specifically, the convergence rate   of $\cO(1/T)$ in term of the function optimality gap match the algorithmic lower bound for an appropriate class of strongly convex functions \citep{agarwal2009information}.

Despite  the optimal convex optimization rates that SGD achieves, the provable \textit{nonconvex} SGD convergence rate result has long stayed upon on finding an $\ep$-approximate first-order stationary point $\xb$: with high probability SGD finds an $\x$ such that $\|\nabla f(\x)\| \le \ep$ in $\cO(\ep^{-4})$ stochastic gradient computational cost under the gradient Lipschitz condition of $f(\x)$ \citep{NESTEROV}.
In contrast, our goal in this paper is to find an \textit{$(\ep,\sqrt{\rho\ep})$-approximate second-order stationary point} $\x$ such that $\|\nabla f(\x)\| \le \ep$ and the least eigenvalue of the Hessian matrix $\nabla^2 f(\x)$ is $\ge -\sqrt{\rho \ep}$, where $\rho>0$ denotes the so-called Hessian-Lipschitz parameter to be specified later \citep{nesterov2006cubic,tripuraneni2018stochastic,carmon2018accelerated,agarwal2017finding}.
Putting it differently, we need to escape from all first-order stationary points that admit a strong negative Hessian eigenvalue (a.k.a.~saddle points) \citep{dauphin2014identifying} and lands at a point that quantitatively resembles a local minimizer in terms of the  gradient norm and least Hessian eigenvalue.

\begin{algorithm}[tb]
	\caption{SGD (Meta version)}
	\begin{algorithmic}[1]
	\STATE {\bf for} $t =1,2,\dots$ {\bf do}
	\STATE \quad
	Draw an independent $\bzeta^{t}\sim \cD$ and set 
	$\x^t \leftarrow \x^{t-1} - \eta \nabla F(\x^{t-1}; \bzeta^{t})$
				{\hfill $\diamond$ SGD step} 
	\STATE \quad {\bf if} {Stopping criteria is satisfied} {\bf then}
			\STATE \quad\quad \textbf{break}
	\end{algorithmic}
\label{algo:SGDmeta}
\end{algorithm}

Results on the convergence rate of SGD for finding an $(\ep,\sqrt{\rho\ep})$-approximate second-order stationary point have been scarce until very recently.\footnote{Some authors work with $(\ep,\delta)$-stationary point and we ignore such expression due to the natural choice $\delta = \sqrt{\rho\ep}$ in optimization literature \citep{nesterov2006cubic,jin2017escape}.}
To the best of our knowledge, \citet{ge2015escaping} provided the first theoretical result that SGD with \textit{artificially injected spherical noise} can escape from all saddle points in  polynomial time.
Moreover, \citet{ge2015escaping} showed that SGD finds an $(\ep,\sqrt{\rho\ep})$-approximate second-order stationary point at a stochastic gradient computational cost of $\tilde{\cO}(\text{poly}(d)\epsilon^{-8})$.
A recent follow-up work by \cite{daneshmand2018escaping} derived a convergence rate of $\tO(d\epsilon^{-10})$ stochastic gradient computations.
These milestone works \cite{ge2015escaping, daneshmand2018escaping} showed that SGD can always escape from saddle points and can find an approximate local solution of \eqref{opt_eq} with a stochastic gradient computational cost that is polynomially dependent on problem-specific parameters.
Motivated by these recent works, the current paper tries to answer the following questions:
\textit{
\begin{enumerate}[(i)]
\item
Is it possible to sharpen the analysis of SGD algorithm and obtain a reduced stochastic gradient computational cost for finding an $(\ep,\cO(\ep^{0.5}))$-approximate second-order stationary point?
\item
Is artificial noise injection absolutely necessary for SGD to find an approximate second-order stationary point with an \textit{almost} dimension-free stochastic gradient computational cost? 
\end{enumerate}
}

To answer aforementioned question (i), we provide a \textit{sharp} analysis and prove that SGD with variants only on stopping criteria finds an $(\ep,\sqrt{\rho\ep})$-approximate stationary point at a remarkable $\tilde{\cO}(\ep^{-3.5})$ stochastic gradient computational cost for solving \eqref{opt_eq}. This is a  unexpected result because
it has been conjectured by many \citep{xu2018first,allen2018neon2,tripuraneni2018stochastic} that an $\tO(\ep^{-4})$ cost is required to find an $(\ep,\sqrt{\rho\ep})$-approximate second-order stationary point. Our result on SGD \textit{negates} this conjecture and serves as the sharpest stochastic gradient computational cost for SGD prior to this work.
To answer question (ii) above, we propose a novel \textit{dispersive noise assumption} and prove that under such an assumption, SGD requires \textit{no} artificial noise injection in order to achieve the aforementioned sharp stochastic gradient computational cost.
Such noise assumption is satisfied in the case of infinite online samples and Gaussian sampling zeroth-order optimization, and can be satisfied automatically by injecting artificial ball-shaped, spherical uniform, or Gaussian noises.

We emphasize that the $\tilde{\cO}(\ep^{-3.5})$ stochastic gradient computational cost is, however, \textit{not} the lower bound complexity for finding an $(\ep,\sqrt{\rho\ep})$-approximate second-order stationary point for  problem \eqref{opt_eq}.
Recently, \cite{fang2018spider} applied a novel variance reduction technique named \SPIDER\ \textit{tracking} and proposed the \SPIDER-SFO\textsuperscript{+} algorithm which  achieves a stochastic gradient computational cost of $\tilde{\cO}(\ep^{-3})$ for finding an $(\ep,\sqrt{\rho\ep})$-approximate second-order stationary point.
It is our belief that variance reduction techniques are necessary to achieve a stochastic gradient computational cost that is strictly sharper than $\tilde\cO(\ep^{-3.5})$.  We also note that the promising $\tilde{\cO}(\ep^{-3.5})$ complexity relies on the Hessian-smooth assumption,  whereas the standard $\cO(\ep^{-4})$ complexity for searching an approximate first-order stationary point does not need this assumption.

\subsection{Our Contributions}
We study theoretically in this work the SGD algorithm for minimizing nonconvex function $\EE[F(\x;\bzeta)]$.
Specially, this work contributes the following:

\begin{enumerate}[(i)]
\item 
We propose a sharp convergence analysis for the classical and simple SGD and prove that the total stochastic gradient computational cost to find a second-order stationary point is at most $\tO(\ep^{-3.5})$ under both Lipschitz-continuous gradient and Hessian assumptions of objective function.
Such convergence rate matches the most accelerated nonconvex stochastic optimization results that such as Nesterov's momentum acceleration, negative curvature search, and quadratic and cubic regularization tricks.

\item
We propose the \textit{dispersive noise assumption} and prove that under such an assumption, SGD ensures to escape all saddles that has a strongly negative Hessian eigenvalue.  Such type of noise generalizes the existing artificial ball-shaped noise and is widely applicable to many tasks.

\item
Our novel analytic tools for proving saddle escaping and fast convergence of SGD is of independent interests, and they shed lights on developing and analyzing  new  stochastic optimization algorithms.
\end{enumerate}

\paragraph{Organization}
The rest of the paper is organized as follows.
\S\ref{sec:main} provides the SGD algorithm and the main convergence rate theorem for finding an $(\ep,\sqrt{\rho\ep})$-approximate second-order stationary point. Related Works are discussed in \S\ref{sec:compar}. We conclude our paper in \S\ref{sec:conclusion} with proposed future directions. In Appendix \ref{sec:proof_tech}, we sketch the proof of our convergence rate theorem by providing and discussing three core propositions. And all the missing proofs are detailed  in the   Appendix rest sections.

\paragraph{Notation}
Let $\|\cdot\|$ denote the Euclidean norm of a vector or spectral norm of a square matrix.
Denote $p_n = \cO(q_n)$ for a sequence of vectors $p_n$ and positive scalars $q_n$ if there is a global constant $C$ such that $|p_n| \le Cq_n$, and $p_n = \tilde{\cO}(q_n)$ such $\tC$ hides a poly-logarithmic factor of $d$ and $\ep$.
Denote $p_n = \tilde{\Omega} (q_n)$ if there is $\tC$ which hides a poly-logarithmic factor such that $| p_n|\geq \tC q_n$. We denote $p_n \asymp q_n$ if there is  $\tC$ which hides a poly-logarithmic factor of $d$ and $\ep$ such that $p_n = \tC q_n$.  Further, we denote linear transformation of set $\cA\subseteq \RR^d$ as $c_1 + c_2\cA := \{c_1 + c_2 a: a\in\cA\}$.
Let $\lambda_{\min}(\A)$ denote the least eigenvalue of a real symmetric matrix $\A$.  We denote $\cB(\x, R)$ as the $R$-neighborhood of $\x^0$, i.e. the set  $\{\y\in \RR^d  : \| \y - \x^0\|\leq R  \}$.




\section{Algorithm and Main Result}\label{sec:main}
In this section, we formally state  SGD  and the corresponding convergence rate theorem.
In \S\ref{sec:assu}, we propose the key assumptions for the objective functions and noise distributions.
In \S\ref{sec:SGDmain}, we detail  SGD in Algorithm \ref{algo:SGD} and present the main convergence rate theorem.

\subsection{Assumptions and Definitions}\label{sec:assu}

\begin{assumption}[Smoothness]\label{assu:smooth}
We assume that the objective function satisfies some smoothness\footnote{The smoothness gradient condition for $F(\xb;\bzeta)$ is only needed for searching an approximate second-order stationary point. To find a first stationary point, we only can replace \eqref{GradSmooth} with a relaxed one: $\|\nabla f(\xb) - \nabla f(\xb') \|
 \le 
L \|\xb-\xb'\|$.
} conditions: for all $\xb,\xb' \in \RR^d$, we have
\beq\label{GradSmooth}
\|\nabla F(\xb;\bzeta) - \nabla F(\xb';\bzeta) \|
 \le 
L \|\xb-\xb'\|
 ,
\eeq
and
\beq\label{HessSmooth}
\|\nabla^2 f(\xb) - \nabla^2 f(\xb') \|
 \le 
\rho \|\xb-\xb'\|
 .
\eeq
\end{assumption}

With Hessian-Lipschitz parameter $\rho$ prescribed in \eqref{HessSmooth}, we formally define the $(\ep,\sqrt{\rho\ep})$-approximate second-order stationary point.
To best of our knowledge, such concept firstly appeared in \cite{nesterov2006cubic}:

\begin{definition}[Second-order Stationary Point]\label{defi:SSP}
Call $\xb\in \RR^d$ an \textit{$(\ep,\sqrt{\rho\ep})$-approximate second-order stationary point} if
\[
\|\nabla f(\xb)\| \le \epsilon
 ,\qquad  
\lambda_{\min} (\nabla^2 f(\xb))  \ge -\sqrt{\rho \ep}
.
\]
\end{definition}

Let the starting point of our SGD algorithm be $\tilde{\x}\in\RR^d$.
We assume the following boundedness assumption:

\begin{assumption}[Boundedness]\label{assu:main}
The $\Delta := f(\tilde{\x})- f^* < \infty$ where $f^* = \inf_{\x\in\RR^d} f(\x)$ is the global infimum value of $f(\x)$.
\end{assumption}

Turning to the assumptions on noise, we first assume the following:
\begin{assumption}[Bounded Noise]\label{assu:noise}
For any $\x\in\RR^d$, the stochastic gradient $\nabla F(\xb; \bzeta)$ satisfies:
\begin{eqnarray}\label{bounded}
\left\| \nabla F(\x, \bzeta) -\nabla f(\x) \right\|^2\leq \sigma^2
,\qquad a.s.
\end{eqnarray}

An alternative (slighter weaker) assumption that also works is to assume  that the norm  of noise satisfies subgaussian distribution, i.e. for any $\x\in\RR^d$,

\begin{eqnarray}\label{bound2}
\EE_{\bzeta} \left[\exp(\left\| \nabla F(\x; \bzeta) -\nabla f(\x) \right\|^2/\sigma^2)\right]\leq 1
.
\end{eqnarray}

\end{assumption}

Assumptions \ref{assu:smooth}, \ref{assu:main} and \ref{assu:noise} are standard in nonconvex optimization literatures \citep{ge2015escaping,xu2018first,allen2018neon2,fang2018spider}. We treat the parameters $L$, $\rho$, $\Delta$, and $\sigma$ as global constants, and focus on the dependency for stochastic gradient complexity on $\ep$ and $d$.

For the purpose of fast saddles escaping, we need an extra noise shape assumption.
Let $q^*$ be a positive real, and let $\v$ be a unit vector.
We define a set property as follows:

\begin{definition}[$(q^*,\v)$-narrow property]\label{defi:qnarrow}
We say that a Borel set $\cA\subseteq \RR^d$ satisfies the $(q^*,\v)$-\textbf{narrow property}, if for any $\u \in \cA$ and $q\ge q^*$, $\u+q\v \in \cA^c$ holds, where $\cA^c$ denotes the complement set of $\cA$.
\end{definition}

It is easy to verify that the first parameter in the narrow property is linearly scalable and translation invariant with sets, i.e.~if $\cA$ satisfies $(q^*,\v)$-narrow property, then for any $c_1\in\RR^d$ and $c_2\in\RR$, $c_1 + c_2\cA$ satisfies the $(|c_2|q^*,\v)$-narrow property. Next, we introduce the $\v$-dispersive property as follows:

\begin{definition}[$\v$-dispersive property]\label{defi:vdispersive}
Let $\tilde{\bxi}$ be a  random vector satisfying Assumption \ref{assu:noise}.
We say that $\tilde{\bxi}$ has the $\v$-\textbf{dispersive property}, if for an arbitrary set $\cA$ that satisfies the $(\sigma / (4\sqrt{d}), \v )$-narrow property (as in Definition \ref{defi:qnarrow}) the following holds:
\beq\label{smallprob1}
\PP\left( \tilde{\bxi} \in \cA \right)
 \le \frac14
.
\eeq
\end{definition}
Obviously, if $\tilde{\bxi}$ satisfies  $\v$-dispersive property, for any fixed vector $\a$,  then $\tilde{\bxi}+\a$ also  satisfies  $\v$-dispersive property. We then present the dispersive noise assumption as follows:

\begin{assumption}[Dispersive Noise]\label{assu:noise2}
For an arbitrary point $\xb\in \RR^d$, $\nabla f(\xb; \bzeta)$ admits the $\v$-dispersive property (as in Definition \ref{defi:vdispersive}) \textbf{for any unit vector $\v$}.
\end{assumption}

Assumption \ref{assu:noise2} is motivated from the key lemma for escaping from saddle points in \citet{jin2017escape}, which obtains a sharp rate for gradient descent escaping from saddle points.
Such an assumption  enables SGD to move out of a \textit{stuck region} with probability $\ge 3/4$ in its first step and enables escaping from saddle points (by repeating  logarithmic rounds).
We would like to emphasize that the $\v$-dispersive noises contain many canonical examples; see the following

\paragraph{Examples of Dispersive Noises}
Here we exemplify a few noise distributions that satisfy the $\v$-dispersive property, that is, for an arbitrary set $\cA$ with $(q^*,\v)$-narrow property, where $q^* = \sigma / (4\sqrt{d})$.
We have the following proposition:

\begin{proposition}\label{prop:noise}
For the following noise distributions, \eqref{smallprob1} in Definition \ref{defi:vdispersive} is satisfied:
\begin{enumerate}[(i)]
\item
\textit{Gaussian noise}:
$\tilde\bxi = \sigma/\sqrt{d} * \bchi$ where $\bchi$ is the standard Gaussian noise with covariance matrix  $\Ib_d$;
\item
\textit{Uniform ball-shaped noise or spherical noise}:
$\tilde\bxi = \sigma * \bxi_b$, where $\bxi_b$ is uniformly sampled from the unit ball centered at ${\bf 0}$;
\item
\textit{Artificial noise injection}:
$\tilde\bxi = \nabla f(\xb;\bzeta) + \tilde\bgamma$, where $\tilde{\bgamma}$ is some independent artificial noise that is $\v$-dispersive for any $\v$.
\end{enumerate}
\end{proposition}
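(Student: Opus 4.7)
The plan is to exploit a single geometric observation underlying all three cases: if $\cA$ has the $(q^{*},\v)$-narrow property, then for every line $\ell_{y}=\{y+t\v:t\in\RR\}$ parallel to $\v$ with $y\in\v^{\perp}$, the slice $\cA\cap \ell_{y}$ has $1$-D Lebesgue measure at most $q^{*}=\sigma/(4\sqrt{d})$. Indeed, if $\u,\u+q\v\in\cA$ with $q\ge q^{*}$ then Definition \ref{defi:qnarrow} is violated, so the projection of $\cA\cap\ell_{y}$ onto $\v$ has diameter at most $q^{*}$. Given this, I would bound $\PP(\tilde{\bxi}\in\cA)$ by Fubini, integrating the density along $\v$ first.

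For (i), write $\tilde{\bxi}=(\sigma/\sqrt{d})\bchi$ with $\bchi\sim\mathcal{N}(0,\mathbf{I}_{d})$, and decompose into its $\v$-component and its $\v^{\perp}$-component, which are independent Gaussians. Conditional on the orthogonal part, the $\v$-component has density bounded by $\sqrt{d/(2\pi\sigma^{2})}$, so by the slice bound
\[
\PP(\tilde{\bxi}\in\cA)\le q^{*}\cdot\sqrt{\tfrac{d}{2\pi\sigma^{2}}}=\tfrac{1}{4\sqrt{2\pi}}<\tfrac14.
\]
For (ii) with $\tilde{\bxi}=\sigma\bxi_{b}$, I would apply the same slicing: the density of $\tilde{\bxi}$ is $1/V_{d}(\sigma)$ on the ball of radius $\sigma$, so
\[
\PP(\tilde{\bxi}\in\cA)\le \frac{1}{V_{d}(\sigma)}\int_{\{y\in\v^{\perp}:|y|<\sigma\}}q^{*}\,dy=\frac{q^{*}V_{d-1}(\sigma)}{V_{d}(\sigma)}=\frac{q^{*}}{\sigma}\cdot\frac{V_{d-1}(1)}{V_{d}(1)}.
\]
The Gamma-function identity $V_{d-1}(1)/V_{d}(1)=\Gamma(d/2+1)/[\sqrt{\pi}\,\Gamma((d+1)/2)]$ combined with Gautschi's inequality gives $V_{d-1}(1)/V_{d}(1)\le\sqrt{d/(2\pi)}\cdot(1+o(1))$, yielding $\PP(\tilde{\bxi}\in\cA)\le 1/(4\sqrt{2\pi})<1/4$. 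The spherical uniform case is analogous after conditioning on the radius of the projection onto $\v^{\perp}$, or by treating the sphere as a limit of thin spherical shells whose $\v$-marginal density is again of order $\sqrt{d}/\sigma$.

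For (iii), I would condition on the stochastic gradient $\g:=\nabla F(\x;\bzeta)$ and use the independence of $\tilde{\bgamma}$. For each realization $\g$,
\[
\PP\bigl(\tilde{\bxi}\in\cA\,\bigm|\,\g\bigr)=\PP\bigl(\tilde{\bgamma}\in\cA-\g\bigr),
\]
and the translation invariance of the narrow property (noted right after Definition \ref{defi:qnarrow}, taking $c_{1}=-\g$, $c_{2}=1$) shows that $\cA-\g$ still has the $(q^{*},\v)$-narrow property. By the hypothesis that $\tilde{\bgamma}$ is $\v$-dispersive, the conditional probability is at most $1/4$, and averaging over $\bzeta$ preserves this bound.

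The only real obstacle is the dimensional constant for the ball/sphere case: I need $V_{d-1}(1)/V_{d}(1)\le\sqrt{d}/(\text{something sufficient})$ to absorb the $1/(4\sqrt{d})$ from $q^{*}$ and stay strictly below $1/4$. This is a classical fact but must be stated carefully (e.g.\ via Wendel's inequality or an explicit Beta-function computation) rather than asymptotically; once established, all three cases follow immediately from the common slice estimate.
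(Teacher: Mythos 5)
Your proposal is correct and takes essentially the same route as the paper: slice $\cA$ along lines parallel to $\v$, bound each one-dimensional slice's Lebesgue measure by $q^{*}$ via the narrow property, integrate the marginal density in the $\v$-direction for (i) and (ii), and condition on $\nabla F(\x;\bzeta)$ plus use translation invariance for (iii). The only loose end you flag yourself is the $(1+o(1))$ in the ball case; the paper makes it non-asymptotic via the exact inequality $\Gamma(x+\tfrac12)\le\sqrt{x}\,\Gamma(x)$, which gives $V_{d-1}(1)/V_{d}(1)\le\sqrt{(d+1)/(2\pi)}\le\sqrt{d}$ and hence $\PP(\tilde{\bxi}\in\cA)\le q^{*}\sqrt{d}/\sigma=1/4$ directly, so your Wendel/Gautschi suggestion is exactly what is needed.
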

The proof of Proposition \ref{prop:noise} is shown in Appendix \ref{sec:proof,prop:noise}.

\subsection{SGD  and Main Theorem}\label{sec:SGDmain}
Our SGD algorithm for analysis purposes is detailed in Algorithm \ref{algo:SGD}. Our SGD algorithm only differs from classical SGD algorithms  on  stopping criteria.
Distinct from the classical ones that simply terminate in a certain number of steps and output the final iterate or a randomly drawn iterate, the SGD  we consider here introduces a \textit{ball-controlled mechanism} as the stopping criteria: if $\x^k$ exits a small neighborhood in $K_0$ iterations (Line \ref{line:stop_criteria} to  \ref{line:break}), one starts over and do the next round of SGD;
if exiting does \textit{not} occur in $K_0$ iterations, then the algorithm simply outputs an arithmetic average of $\x^k$ of the last $K_0$ iterates within the neighborhood, which in turns is an $(\ep,\sqrt{\rho\ep})$-approximate second-order stationary point with high probability.
In contrast with the stopping criteria in the deterministic setting that checks the descent in function values \citep{jin2017escape}, the function value in stochastic setting is reasonably costly to approximate (costs  $O(\ep^{-2})$ stochastic gradient computations), and the error plateaus might be hard to observe theoretically.

\begin{algorithm}[tb]
	\caption{SGD (For finding an $(\ep,\sqrt{\rho\ep})$-approximate second-order stationary point): Input $\tilde{\x}$, $K_0\asymp \ep^{-2}$, $\eta \asymp \ep^{1.5}$, and $\radius \asymp \ep^{0.5}. $}
	\begin{algorithmic}[1]
	\STATE Set $t = 0$, $k=0$, $\x^0 = \tilde{\x}$
    \WHILE{$k < K_0$} \label{line:stop_criteria}
		\label{line3}
			\STATE
				Draw an independent $\bzeta^{k+1}\sim \cD$ and set 
				$\x^{k+1} \leftarrow \x^{k} - \eta \nabla F(\x^{k}; \bzeta^{k+1})$
				{\hfill $\diamond$ SGD step} 
			\STATE
				$t \leftarrow t+1, ~ k \leftarrow k+1$
				{\hfill $\diamond$ Counter of SGD steps}
		 \label{line:SGDupdate}
		\IF{$\| \x^{k} - \x^0 \|> \radius$ }
			\STATE $\x^0 \leftarrow \x^k$, $k\leftarrow 0$
		\label{line:break}
		\ENDIF

	\ENDWHILE 	\label{line10}
	\STATE
		$\bar{\x}_{output} \leftarrow (1 / K_0) \sum_{k=0}^{K_0-1} \x^k$
							{\hfill $\diamond$ Reach this line in $t \le T_0 = \tO(\ep^{-3.5})$ SGD steps w.h.p.}
	\RETURN $\bar{\x}_{output}$
							{\hfill $\diamond$ Return an $(\ep,\sqrt{\rho\ep})$-approximate second-order stationary point}
	\end{algorithmic}
	
	\label{algo:SGD}
\end{algorithm}

\paragraph{Parameter Setting} We set the hyper-parameters\footnote{  Set  $\tilde{\eta} = \frac{B^2 \delta}{512 \max(\sigma^2, 1) \log(48p^{-1}) \left\lfloor\frac{\log(3\cdot p^{-1})}{\log(0.7)}+1 \right\rfloor \log(d)}\asymp \ep^{1.5}$. Because $\eta$ in \eqref{parametersetting}  involves logarithmic factors on $K_0$ and $\tC_1$,  a simple choice to set   the step size is  as $\eta =\tilde{\eta} \log^{-3}(\tilde{\eta}^{-1})\asymp \ep^{1.5}$.  } for Algorithm \ref{algo:SGD} as follows:
\begin{eqnarray}\label{parametersetting}
&&\tC_1 =  2 \left\lfloor\frac{\log(3 \cdot p^{-1})}{\log(0.7)}+1 \right\rfloor\log\left(\frac{24\sqrt{d}}{\eta}\right)\asymp 1, \quad \delta = \sqrt{\rho \ep} \asymp \ep^{0.5},\notag\\
&&\delta_2 = 16 \delta\asymp\ep^{0.5},  \quad B  = \frac{ \delta}{\rho\tC_1}\asymp \ep^{0.5},  \quad K_0 = \tC_1 \eta^{-1}\delta_2^{-1}\asymp\ep^{-2},\notag\\
 &&  \eta \leq \frac{B^2\delta}{64\max(\sigma^2,1) \tC_1 \log(48K_0/p)  } \cdot \frac{1}{ 3+ \log(K_0) }\asymp\ep^{1.5}.
\end{eqnarray}
 For brevity of analysis, we assume $B\leq \min(1, \frac{\sigma}{L}, \frac{1}{L})\asymp \cO(1)$, and $\delta\leq 1$.  In other words, we assume the accuracy $\ep \leq \cO(1)$.

Now we are ready to present our main result of SGD theorem.

\begin{theorem}[SGD  Rate]\label{theo:main}
Let Assumptions \ref{assu:smooth}, \ref{assu:main}, \ref{assu:noise}, and \ref{assu:noise2} hold.
Let the parameters $K_0$, $\eta$ and $B$ be set in \eqref{parametersetting} with   $p \in (0,1)$ being the  error probability, and set $T_1 = \left\lceil \frac{7\Delta \eta K_0}{B^2}\right\rceil+1 \asymp\ep^{-1.5}$, then  running Algorithm \ref{algo:SGD}  in $T_0 = T_1\cdot K_0\asymp\frac{\Delta\rho^{1/2}}{\max(\sigma^2,1)\ep^{3.5}}\asymp\ep^{-3.5}$, with probability at least $1-(T_1+1)  \cdot p$,  SGD  outputs an $\bar{\x}_{output}$ satisfying 
\beq\label{ESSP}
\|\nabla f(\bar{\x}_{output})\| \le  18 \rho B^2\asymp \ep
 ,\qquad
\lambda_{\min}(\nabla^2 f(\bar{\x}_{output})) \ge - 17\delta \asymp  -\sqrt{\rho\ep}
.
\eeq
Treating $\sigma$, $L$, and $\rho$ as global constants, the stochastic gradient computational cost is $\tO(\ep^{-3.5})$.	
\end{theorem}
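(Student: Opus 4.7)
The plan is to decompose the execution of Algorithm \ref{algo:SGD} into a sequence of \emph{epochs}, where each epoch starts at a reset point $\x^0$ and ends either by the exit event $\|\x^k-\x^0\|>B$ triggered within $K_0$ SGD steps (a \emph{successful epoch}), or by reaching $K_0$ interior steps and outputting $\bar\x_{output}$ (a \emph{terminating epoch}). I will establish three core propositions: (i) if $\x^0$ has large gradient, an epoch exits early with high probability; (ii) if $\x^0$ has a strongly negative Hessian eigenvalue, an epoch still exits early with high probability via a saddle-escape argument that exploits Assumption \ref{assu:noise2}; and (iii) in either case, the exit produces a guaranteed function value decrease of order $\Omega(B^2/(\eta K_0)) = \Omega(\ep^{1.5})$. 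The termination count $T_1 \asymp \Delta/\ep^{1.5}$ and step budget $T_0 = T_1\cdot K_0 \asymp \ep^{-3.5}$ then follow immediately from $\Delta = f(\tilde\x)-f^*$, and a union bound over the $\le T_1$ epochs gives the failure probability $\le (T_1+1)p$.

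The analytic backbone for (i) and (ii) is a coupled decomposition of the SGD iterate relative to $\x^0$. Writing $\x^{k+1}-\x^0 = -\eta\sum_{j=0}^{k}\nabla F(\x^j;\bzeta^{j+1})$, I Taylor-expand $\nabla f(\x^j)$ around $\x^0$ to split the sum into (a) a \emph{drift term} driven by $\nabla f(\x^0)$, (b) a \emph{linear term} $\eta\sum_j \nabla^2 f(\x^0)(\x^j-\x^0)$ which is where the negative curvature is harnessed, (c) a \emph{cubic Taylor remainder} of size $\cO(\rho B^2)$, and (d) a martingale \emph{noise term} controlled via sub-Gaussian/Hoeffding-type concentration under Assumption \ref{assu:noise}. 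Case (i): when $\|\nabla f(\x^0)\| \gtrsim \rho B^2$, the drift term dominates within $K_0$ steps and pushes the iterate outside the $B$-ball with high probability. Case (ii) is the delicate saddle-escape step: define a ``stuck region'' $\cS$ consisting of initial perturbations $\x^1-\x^0$ whose component along the unit eigenvector $\v$ corresponding to $\lambda_{\min}(\nabla^2 f(\x^0)) \le -\delta$ is tiny (of order $\sigma/(4\sqrt d)$). A standard coupling argument shows $\cS$ is $(\sigma/(4\sqrt d),\v)$-narrow in the sense of Definition \ref{defi:qnarrow}; Assumption \ref{assu:noise2} then yields $\PP(\x^1-\x^0\in\cS)\le 1/4$. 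Conditioned on the complement, the linear operator $\prod_j(\I-\eta\nabla^2 f(\x^j))$ amplifies the $\v$-component geometrically with rate $(1+\eta\delta)$, driving the iterate out of $\cB(\x^0,B)$ within the prescribed $K_0 = \tilde\cO(1/(\eta\delta))$ steps. Boosting by $\tilde\cO(1)$ independent restarts (absorbed into $\tC_1$) upgrades probability $3/4$ to $1-p$.

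Once exit is shown, the descent quantity comes from the ``improve-or-localize'' style inequality $f(\x^{k+1})\le f(\x^k) - (\eta/2)\|\nabla f(\x^k)\|^2 + \cO(\eta^2 L\sigma^2) + \text{mtg}$ summed over the epoch, combined with $\|\x^{exit}-\x^0\|>B$: the sum of squared step sizes alone forces $\sum_k \eta^2\|\nabla F\|^2 \gtrsim B^2$, so the guaranteed descent is $\gtrsim B^2/(\eta K_0)$ after subtracting variance. The choice of $\eta$ in \eqref{parametersetting} precisely balances the noise accumulation against this descent. For the terminating epoch, I will prove (iii) by contraposition of (i)–(ii): if $\bar\x_{output}$ failed \eqref{ESSP}, then with constant probability an exit would have occurred, contradicting the fact that all $K_0$ iterates remained in the ball. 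Passing from $\x^0$ to $\bar\x_{output}$ only costs $\cO(\rho B^2)$ in gradient and $\cO(\rho B)$ in Hessian spectrum by the smoothness assumptions, which match the final constants $18\rho B^2$ and $17\delta$.

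The main obstacle is the saddle-escape argument in Case (ii), specifically the coupling between the geometric amplification of the linear term and the boundedness of the cubic remainder plus noise martingale inside the ball. Prior works (e.g., \cite{jin2017escape}) rely on isotropic perturbations to control the initial $\v$-component; replacing isotropy with the weaker $\v$-dispersive property requires me to reformulate the stuck region as a set satisfying Definition \ref{defi:qnarrow} for every unit $\v$ simultaneously and to argue that all competing nonlinear/noise contributions during the amplification window remain of smaller order than the $(1+\eta\delta)^k$ growth, which is where the careful logarithmic tuning of $K_0$ and $\tC_1$ in \eqref{parametersetting} enters. This is the step I expect to require the most intricate bookkeeping; the other two propositions are, by comparison, direct consequences of standard concentration and descent inequalities.
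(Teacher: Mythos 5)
Your high-level structure (epochs terminated by a ball-exit event, a saddle-escape proposition relying on Assumption~\ref{assu:noise2}, a per-exit function decrease, and an SSP certificate for the terminating epoch, all stitched together by a union bound over $T_1$ epochs) is the same three-part skeleton the paper uses. Your Part~(ii) is essentially Proposition~\ref{prop:first}, with one technical caveat: because escape in the stochastic setting depends on the \emph{entire} noise sequence and not a one-shot initial perturbation, the ``stuck region'' cannot be the set of initial steps whose $\v$-component is small; the paper instead defines it probabilistically as $\cS_{\Ko}^{\radius}(\x^0)=\{\u:\PP(\cK_{exit}(\u)<\Ko)\le 0.4\}$ and establishes the $(q_0,\e_1)$-narrow property via the two-trajectory coupling Lemma~\ref{lemm:key}. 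This is a substantive, not cosmetic, refinement of the deterministic argument.

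The serious gap is in your Part~(iii). The improve-or-localize inequality $f(\x^{k+1})\le f(\x^k)-\tfrac{\eta}{2}\|\nabla f(\x^k)\|^2+\cO(\eta^2L\sigma^2)+\text{mtg}$, summed over $K_0$ steps, accumulates a variance term of order $K_0\eta^2 L\sigma^2$. With the scaling of \eqref{parametersetting} ($\eta\asymp\ep^{1.5}$, $K_0\asymp\ep^{-2}$, $B\asymp\ep^{0.5}$) this is $\asymp\ep$, which \emph{strictly dominates} the target descent $B^2/(\eta K_0)\asymp\ep^{1.5}$. ``Subtracting variance'' therefore leaves nothing; balancing the naive noise accumulation against the descent forces $\eta\asymp\ep^2$, which is exactly the classical $\cO(\ep^{-4})$ obstruction the paper explicitly identifies in \eqref{tra}--\eqref{step size}. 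The missing idea is the directional decomposition of the quadratic model $g(\x)$ at $\x^0$ into the positive eigenspace piece $g_\cS$ and nonpositive piece $g_{\cS\bot}$. On $\cS^\bot$ the curvature term $\tfrac12(\cdot)^\top\bH_{\cS\bot}(\cdot)\le 0$ eliminates the $\tfrac{L}{2}\eta^2\|\cdot\|^2$ penalty outright (Lemma~\ref{analysis on vv}); on $\cS$ the paper couples the SGD iterate $\uu^k$ to an auxiliary deterministic GD trajectory $\y^k$ and, using $\|(\I-a\A)^i\A(\I-a\A)^j\|\le\tfrac1{a(i+j+1)}$ (inequality \eqref{basic2}), shows the accumulated noise in $(\z^{\sK})^\top\bH_\cS\z^{\sK}$ is only $\tO(\eta\sigma^2\log^2 K_0)\asymp\tO(\ep^{1.5})$ rather than $\cO(K_0\eta^2 L\sigma^2)\asymp\ep$ (Lemmas~\ref{zz} and~\ref{analysis on uu}). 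Without this $g_\cS/g_{\cS\bot}$ split and the auxiliary-trajectory coupling, the per-exit decrease of order $\ep^{1.5}$ --- and hence the $\ep^{-3.5}$ total cost --- does not follow.

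A secondary issue: your Proposition~(i), ``$\|\nabla f(\x^0)\|\gtrsim\rho B^2$ implies early exit,'' is both not proved here and not needed. At that threshold the deterministic drift over $K_0$ steps and the accumulated martingale noise are both of order $B$, so the claim is fragile at best. The paper instead proves the gradient bound on $\bar\x_{output}$ directly in Proposition~\ref{local convergence} by telescoping $\eta\sum_k\nabla f(\x^k)=-(\x^{K_0-1}-\x^0)+\eta\sum_k\bxi^k$ and applying vector-martingale concentration to $\sum_k\bxi^k$; the large-gradient regime only enters the descent proof (Proposition~\ref{faster descent}, Case~1) with the $\cO(1)$ threshold $5\sigma$, whose role is to ensure the gradient dominates the noise pointwise, not to force an exit.
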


 Strikingly, Theorem \ref{theo:main} indicates that SGD in Algorithm \ref{algo:SGD} achieves a stochastic gradient computation cost of $\tO(\ep^{-3.5})$ to find an $(\ep, \sqrt{\rho\ep})$-approximate second-order stationary point\footnote{For searching a more general  $(\ep_g,\ep_H)$-approximate second-order stationary point, one can obtain an complexity of $\tO(\ep_g^{-3.5}+\ep_H^{-7})$ using the same technique. }.
Compared with existing algorithms that achieves an $\tO(\ep^{-3.5})$ convergence rate, SGD is comparatively simpler to implement and does \textit{not} invoke any additional techniques or iterations such as momentum acceleration \citep{jin2018accelerated}, cubic regularization \citep{tripuraneni2018stochastic}, regularization \citep{allen2018make}, or \NEON-type negative curvature search \citep{xu2018first,allen2018neon2}.  

\begin{algorithm}[!tb]
	\caption{Noise-Scheduled SGD (For finding an $(\ep,\sqrt{\rho\ep})$-approximate second-order stationary point): Input $\tilde{\x}$, $K_o= 2\log\left( \frac{24\sqrt{d}}{\eta}\right)\eta^{-1}\delta_2^{-1}\asymp \ep^{-2}$, $K_0\asymp \ep^{-2}$, $\eta \asymp \ep^{1.5}$, and $\radius \asymp \ep^{0.5}. $}
	\begin{algorithmic}[1]
		\STATE Set $t = 0$, $k=0$, $\x^0 = \tilde{\x}$
		\WHILE{$k < K_0$}
		\label{line:stop_criteria2}
		\IF{$\text{mod}(k,K_o)=0$}
		\STATE Draw an independent $\bzeta^{k+1}\sim \cD$ and Gaussian noise $\bxi_g \sim N(0,(\sigma^2/d) \Ib_d)$ 
		\\ \hspace{.2in}
		$\x^{k+1} \leftarrow \x^{k+1} - \eta \left( \nabla F(\x^k; \bzeta^{k+1}) + \bxi_g \right)$
		{\hfill $\diamond$ SGD step (with noise injection)} 
		\ELSE
		\STATE
		Draw an independent $\bzeta^{k+1}\sim \cD$ and set 
		$\x^{k+1} \leftarrow \x^{k} - \eta \nabla F(\x^{k}; \bzeta^{k+1})$
		{\hfill $\diamond$ SGD step} 
		\ENDIF
		\STATE
		$t \leftarrow t+1$, $k \leftarrow k+1$
		{\hfill $\diamond$ Counter of SGD steps}
		\label{line:SGDupdate2}
		\IF{$\| \x^k - \x^0 \|> \radius$ }
		\STATE $\x^0 \leftarrow \x^k$, $k \leftarrow 0$
		\ENDIF
		\ENDWHILE
		\STATE
		$\bar{\x}_{output} \leftarrow (1 / K_0) \sum_{k=0}^{K_0-1} \x^k$
		{\hfill $\diamond$ Reach this line in $t \le T_0 = \tO(\ep^{-3.5})$ SGD steps, w.h.p.}
		\RETURN $\bar{\x}_{output}$
		{\hfill $\diamond$ Return an $(\ep,\sqrt{\rho\ep})$-approximate second-order stationary point}
	\end{algorithmic}
	\label{algo:SGD2}
\end{algorithm}

Admittedly, the best-known SGD theoretical guarantee in Theorem \ref{theo:main} relies on a dispersive noise assumption.
To remove such an assumption,  we argue that only $\tO(1)$ steps of each round does one need to run an SGD step of dispersive noise to enable efficient escaping.
We propose a variant of SGD called \textit{Noise-Scheduled SGD} which requires artificial noise injection but does \textit{not}  rely on a dispersive noise assumption. The algorithm  is shown  in Algorithm \ref{algo:SGD2}.  One can obtain the  convergence property straightforwardly.

\begin{remark}
For the function class that admits the \textit{strict-saddle} property \citep{carmon2018accelerated,ge2015escaping,jin2017escape}, an approximate second-order stationary point is guaranteed to be an approximate local minimizer.
For example for optimizing a $\sigma_*$-strict-saddle function, one can first find an $(\ep_*,\sqrt{\rho\ep_*})$-approximate second-order stationary point with $\ep_* \le \sigma_*^2/(2\rho)$ which is guaranteed to be an approximate local minimizer due to the strict-saddle property.
Our SGD convergence rate $\cO(\ep_*^{-3.5}) = \cO(\sigma_*^{-7})$ is independent of the target accuracy $\ep$, and one can run a standard convex optimization theory to obtain an $\cO(1/t)$ convergence rate in terms of the optimality gap.
Limited by space we omit the details.
\end{remark}


\section{Discussions on Related Works}\label{sec:compar}
Due to the recent heat of deep learning, many researchers have studied  the nonconvex SGD method from various perspectives in the machine learning community.
We compare our results with concurrent theoretical works on  nonconvex SGD in the following discussions.
For clarity, we also compare the convergence rates of some works most related to ours in Table~\ref{tab:comparison}.

\begin{table}[!tb]
	\centering
	\begin{tabular}{|l|ll|c|}
		\hline
		&Algorithm			&						&	SG Comp.~Cost
		\\ \hline
		\multirow{6}{*}{\begin{tabular}{@{}l@{}}
				SGD Variants
		\end{tabular}}
		&\NEON+SGD			&\citep{xu2018first}			&	\multirow{2}{*}{\begin{tabular}{@{}l@{}}
				$\ep^{-4}$
		\end{tabular}}
		\\ 
		&\NEON2+SGD		&\citep{allen2018neon2}		&
		\\ \cline{2-4} 
		&Stochastic Cubic		&\citep{tripuraneni2018stochastic}
		&	\multirow{2}{*}{\begin{tabular}{@{}l@{}}
				$\ep^{-3.5}$
		\end{tabular}}
		\\
		&RSGD5				&\citep{allen2018make}		&
		\\ \cline{2-4} 
		&Natasha2$^\Delta$		&\citep{allen2018natasha2}	&	\multirow{2}{*}{\begin{tabular}{@{}l@{}}
				$\ep^{-3.5}$
		\end{tabular}}
		\\ 
		&\NEON2+SNVRG$^\Theta$
		&\citep{zhou2018finding}		&
		\\ 
		&\SPIDER
		& \citep{fang2018spider}
		&\cellcolor{orange!25}	
				 $\ep^{-3}$

		\\
		
		\hline
		\multirow{3}{*}{\begin{tabular}{@{}l@{}}
				Original SGD
		\end{tabular}}
		&\multirow{4}{*}{
			\begin{tabular}{@{}l@{}}
				SGD
		\end{tabular}}
		&\citep{ge2015escaping}		&	$poly(d) \ep^{-8}$
		\\ 
		&					&\citep{daneshmand2018escaping}
		&	$d^4\ep^{-5}$
		\\ 
	&	&\citep{jin2019stochastic}
		&	$\ep^{-4}$
		\\ 
		&					&(this work)				&	\cellcolor{yellow!25} $\ep^{-3.5}$
		\\ \hline
	\end{tabular}
	\caption{Comparable results on the stochastic gradient computational cost for nonconvex optimization algorithms in finding an $(\epsilon, \sqrt{\rho\epsilon})$-approximate second-order stationary point for problem \eqref{opt_eq} under standard assumptions.
		Note that each stochastic gradient computational cost may hide a poly-logarithmic factors of $d$, $n$, $\ep$.
		\\
		{\footnotesize Orange-boxed: \SPIDER\ reported in orange-boxed is the only existing variant stochastic algorithm that achieves provable faster rate by order than simple SGD.}
		\\
		{\footnotesize $^\Delta$: \citet{allen2018natasha2} also obtains a stochastic gradient computational cost of $\tO(\ep^{-3.25})$ for finding a relaxed $(\epsilon, \cO(\epsilon^{0.25}))$-approximate second-order stationary point.}
		\\
		{\footnotesize $^\Theta$: With additional third-order smoothness assumptions, SNVRG \citep{zhou2018finding} achieves complexity of $\tO(\ep^{-3})$.}\\
	}
	\label{tab:comparison}
	
\end{table}

\begin{enumerate}[(i)]
\item \textbf{Pioneer SGD:}
The first work on SGD escaping from saddle points \cite{ge2015escaping} obtain a stochastic gradient computational cost of $\tO(\text{poly(d)}\ep^{-8})$.%
\footnote{%
The analysis in \citep{ge2015escaping} indicates a $poly(d)$ factor of $\cO(d^8)$ at least.
}
Later, \citet{jin2017escape,jin2018accelerated} noise-perturbed GD and AGD and achieve sharp gradient computational costs, which suggests the possibility of sharper SGD rate for escaping saddles.
Our analysis in this work is partially motivated by \citet{jin2017escape} for escaping from saddle points, but generalizes the noise condition and needs \textit{no} deliberate noise injections which is \textit{not} the original GD/SGD algorithm in a strict sense.

\item \textbf{Concurrent SGD:}
A recent result by \citet{daneshmand2018escaping} obtains a stochastic computation cost of $\tO(\tau^{-2}\ep^{-10})$ to find an $(\ep,\sqrt{\rho\ep})$-approximate second-order stationary point.
The highlight of their work is that they need \textit{no} injection of artificial noises.
Nevertheless in their work, the Correlated Negative Curvature parameter $\tau^{-2}$ \textit{cannot} be treated as an $\cO(1)$-constant.
Taking the case of injected spherical noise or Gaussian noise, it can be at most linearly dependent on $d$ [Assumption \ref{assu:noise2}], so the result is  \textit{not} (almost) dimension-free, and  worst-case convergence rate shall be interpreted  as $\tO(d^4\ep^{-5})$.  Concurrently with our work, \citet{jin2019stochastic} also  extend the technique in \cite{jin2017escape} to work on SGD and prove that SGD with the injected noise can find an approximate second-order stationary point with stochastic computation cost of $\tO(\ep^{-4})$. Besides, they  further study the case when the individual function $F(\x;\bzeta)$ does not satisfy gradient-smooth  condition and obtain a complexity of   $\tO(d\ep^{-4})$.

\item\textbf{NC search + SGD:}
The \NEON+SGD \citep{xu2018first,allen2018neon2} methods achieve a dimension-free convergence rate of $\tO(\ep^{-4})$ for the general problem of form \eqref{opt_eq} to reach an $(\ep, \sqrt{\rho\ep})$-approximate second-order stationary point.
Prior to this, classical nonconvex GD/SGD only achieves such a rate for finding an $\ep$-approximate first-order stationary point \citep{NESTEROV}, which, with the help of \NEON\ method, successfully escapes from saddles via a Negative Curvature (NC) search iteration \citep{xu2018first,allen2018neon2}.

\item \textbf{Regularization + SGD:}
Very recently, \citet{allen2018make} takes a quadratic regularization approach and equips it with a negative-curvature search iteration \NEON2 \citep{allen2018neon2}, which successfully improves the rate to $\tO(\ep^{-3.5})$.
In comparison, our method achieves essentially the same rate without using  regularization methods.
\citet{tripuraneni2018stochastic} proposed a stochastic variant of cubic regularization method \citep{nesterov2006cubic,agarwal2017finding} and achieves the same $\tilde\cO(\ep^{-3.5})$ convergence rate, which is the first achieving such rate without invoking variance reduced gradient techniques.%
\footnote{%
Note in the convergence rate here, we also includes the number of stochastic Hessian-vector product evaluations, each of which takes about the same magnitude of time as per stochastic gradient evaluation.}

\item \textbf{NC search + VR:}
\citet{allen2018natasha2} converted a NC search method to the online stochastic setting \citep{carmon2018accelerated} and achieved a convergence rate of $\tO(\ep^{-3.5})$ for finding an $(\ep,\sqrt{\rho\ep})$-approximate second-order stationary point.  For finding a relaxed $(\epsilon, \cO(\epsilon^{0.25}))$-approximate second-order stationary point, \citet{allen2018natasha2}  obtains a lower stochastic gradient computational cost of $\tO(\ep^{-3.25})$.
With a recently proposed \textit{optimal} variance reduced gradient techniques applied, \SPIDER\ achieves the state-of-the-art $\tilde{\cO} (\ep^{-3})$ stochastic gradient computational cost \citep{fang2018spider}.%
\footnote{The independent work \citet{zhou2018finding} achieves a similar convergence rate for finding an $\ep$-approximate second-order stationary point by imposing a third-order smoothness conditions on the objective.
} Very recently, \citet{zhou2019stochastic} and \citet{shen2019stochastic} have independently designed  powerful cubic algorithms using~\SPIDER~techinique and also obtained a complexity of $\tO(\ep^{-3})$.

\end{enumerate}

\subsection{More Related Works}

\paragraph{VR  Methods}
In the recent two years, sharper convergence rates for nonconvex stochastic optimization can be achieved using \textit{variance reduced gradient techniques} \citep{schmidt2017minimizing, johnson2013accelerating, xiao2014proximal, defazio2014saga}.
The SVRG/SCSG \citep{lei2017non} adopts the technique from \citet{johnson2013accelerating} and novelly introduces a random stopping criteria for its inner loops and achieve a stochastic gradient costs of $\cO(\ep^{-3.333})$.
Very recently, two independent works, namely SPIDER \citep{fang2018spider} and SVRC \citep{zhou2018stochastic}, design sharper variance reduced gradient methods and obtain a stochastic gradient computational costs of $\cO(n^{1/2}\epsilon^{-2} \land \epsilon^{-3})$, which is state-of-the-art and \textit{near-optimal} in the sense that they achieve the algorithmic lower bound in the finite-sum setting.

\paragraph{Escaping Saddles in Single-Function Case}
Recently, many theoretical works care about convergence to an approximate second-order stationary point or escaping from saddles for the case of one single function \citep{carmon2016gradient,jin2017escape,carmon2018accelerated,carmon2017convex,agarwal2017finding,jin2018accelerated,lee2017first,du2017gradient}.
Among them, the work \citet{jin2017escape} proposed a ball-shaped-noise-perturbed variant of gradient descent which can efficiently escape saddle points and achieves a  sharp stochastic gradient computational cost of $\ep^{-2}$, which is also achieved by \NEON+GD \citep{xu2018first,allen2018neon2}.
Another line of works apply momentum acceleration techniques \citep{agarwal2017finding, carmon2017convex, jin2018accelerated} and achieve a rate of $\epsilon^{-1.75}$ for a general optimization problem.

\paragraph{Escaping Saddles in Finite-Sum Case}
For the finite-sum setting, many works have applied variance reduced gradient methods \citep{agarwal2017finding,carmon2018accelerated,fang2018spider,zhou2018finding} and further reduce the stochastic gradient computational cost to $\tO(n\ep^{-1.5} + n^{3/4}\ep^{-1.75})$ \citep{agarwal2017finding, allen2018neon2}.
\citet{reddi2018generic} proposed a simpler algorithm that obtains a stochastic gradient cost of $\tO\left(n\ep^{-1.5} + n^{3/4} \ep^{-1.75} + n^{2/3} \ep^{-2}\right)$.
With recursive gradient method applied \citep{fang2018spider,zhou2018finding}, the stochastic gradient cost further reduces to $\tO\left( (n\ep^{-1.5} + n^{3/4}\ep^{-1.75}) \land (n + n^{1/2} \ep^{-2} + \ep^{-2.5}) \right)$, which is the state-of-the-art.

\paragraph{Miscellaneous}
It is well-known that for general nonconvex optimization problem in the form of \eqref{opt_eq}, finding an approximate global minimizer is in worst-case \textit{NP-hard} \citep{hillar2013most}.
Seeing this, many works turn to study the convergence properties based on specific models.
Faster convergence rate to local or even global minimizers can be guaranteed for many statistical learning tasks such as principal component analysis \citep{li2018near, jain2017non}, matrix completion \citep{jain2013low, ge2016matrix,sun2016guaranteed}, dictionary learning \citep{sun2015nonconvex,sun2017complete} as well as linear and nonlinear neural networks \citep{zhong2017recovery,li2017convergence,li2018algorithmic}.

In retrospect, our focus in this paper is on escaping from saddles, and we refer the readers to recent inspiring works studying how to escape from local minimizers \citet{zhang2017hitting,jin2018local}.

\section{Conclusions and Future Direction}\label{sec:conclusion}
In this paper, we presented a sharp convergence analysis for the classical SGD algorithm.
We showed that equipped with a ball-controlled stopping criterion, SGD achieves a stochastic gradient computational cost of $\tO(\ep^{-3.5})$ for finding an $(\ep,\cO(\ep^{0.5}))$-approximate second-order stationary point, which improves over the best-known SGD convergence rate $\cO\left( \min(poly(d) \ep^{-8}, d\ep^{-10}) \right)$ prior to our work.
While this work focuses on sharpened convergence rate, there are still some important questions left:

\begin{enumerate}[(i)]
\item 
It is still unknown whether SGD achieves a rate that is faster than $\tO(\ep^{-3.5})$ or $\tO(\ep^{-3.5})$ is exactly the lower bound for SGD to solve the general problem in the form of \eqref{opt_eq}.
As we mentioned in \S\ref{sec:intro}, it is our conjecture that variance reduction methods are necessary to achieve an $(\ep,\sqrt{\rho\ep})$-approximate second-order stationary point in fewer than $\tO(\ep^{-3.5})$ steps.

\item 
We have \textit{not} considered several important extensions in this work, such as the convergence rate of SGD in solving constrained optimization problems, and how one extends the analysis in this paper to the proximal case.

\item
It will be also interesting to study the stochastic version of Nesterov's accelerated gradient descent (AGD) \citep{jin2018accelerated}.
\end{enumerate}

\paragraph{Acknowledgement}
The authors would like to greatly thank Chris Junchi Li providing us with a  proof of SGD to escape saddle points in $\tO(\ep^{-4})$ computational costs and  carefully revising our paper. The authors also would  like  to thank Haishan Ye for very helpful discussions and  Huan Li, Zebang Shen,  and Li Shen for very helpful comments.

Zhouchen Lin is supported by 973 Program of China (grant no. 2015CB352502), NSF of
China (grant nos. 61625301 and 61731018), Qualcomm, and Microsoft Research
Asia.



\pb
\bibliographystyle{apalike2}
\bibliography{Nonconvex}

\pagebreak\appendix

\pb\section{Proof Sketches for Theorem \ref{theo:main}}
\label{sec:proof_tech}
 We briefly introduce our proof techniques to prove our main Theorem \ref{theo:main} in this section. The rigorous proof is shown in Appendix \ref{proofone}, \ref{prooftwo}, and \ref{proofthree}.
For convenience, when  we study Algorithm \ref{algo:SGD} in each inner loop from Line \ref{line3} to Line \ref{line10},  we  override the definition of $\x^0$ as its initial vector.   Our poof  basically  consists of two ingredients. The first is to prove that SGD can efficiently escape saddles: with high probability, if $\lambda_{\min} \left(\nabla^2 f(\x^{0}) \right)  \leq -\delta_2\asymp -\ep^{0.5}$,  $\x^k$ moves out of $\cB(\x^0, B)$ in $K_0$ iterations (Refer to Appendix \ref{proofone}).
The second is to show that SGD  converges with a faster rate of $\tO(\ep^{-3.5})$, rather than $\tO(\ep^{-4})$. We further separate the second destination into two parts:
\begin{enumerate}
	\item Throughout the execution of the algorithm,  each time $\x^k$ moves out of $\cB(\x^0, B)$,  with high probability, the function value shall decrease with a magnitude at least $\tO(\ep^{1.5})$ (Refer to Appendix \ref{prooftwo}). 
	\item Once  $\x^k$  does not move out of $\cB(\x^0, B)$ until $K_0$ iteration, with high probability, we  find a desired approximate second-order stationary point (Refer to Appendix \ref{proofthree}).
\end{enumerate}

Let $\cF^k = \sigma \{\x^0, \bzeta^1, \cdots, \bzeta^k  \}$ be the filtration involving the full information of all the previous $k$ times
iterations, where $\sigma\{\cdot\}$ denotes the sigma field. And let $\sK_0$ be the first time (mathematically, a stopping time) that $\x^k$ exits the $\radius$-neighborhood of $\x^0$, i.e.	
\beq\label{sK0}
\sK_0  = \inf_k\{k\geq 0: \|\x^{k} - \x^{0} \| > \radius \}
.
\eeq
Both  $\x^{k}$ and $\cI_{\sK_0>k}$ is measurable on $\cF^{k}$, where  $\cI$ denotes the indicator function.

\subsection{Part \mbox{I}: Escaping Saddles}\label{1Escaping Saddles} 

Our goal is to prove the following proposition:

\begin{proposition}\label{prop:first}
	Assume $\lambda_{\min} \left(\nabla^2 f(\x^{0}) \right)  \leq -\delta_2$, and recall the parameter set in \eqref{parametersetting}. 
	Initialized at $\x^0$ and running Line \ref{line3} to Line \ref{line10}, with probability at least $1- \frac{p}{3}$ we have 
	\beq\label{sK0bdd}
	\sK_0
	\le K_0 = 
	\left(
	\left\lfloor \frac{\log( 3 \cdot p^{-1})}{ \log (0.7^{-1}) } \right\rfloor + 1
	\right) K_o
	,
	\eeq
	where $K_o = 2 \log\left( \frac{24\sqrt{d}}{\eta}\right)\eta^{-1}\delta_2^{-1}$.
\end{proposition}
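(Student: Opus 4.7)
My plan is to adapt the stuck-region template of \cite{jin2017escape} for saddle escape, but with the dispersive noise of Assumption \ref{assu:noise2} playing the role of the ball-shaped perturbation, so that no artificial noise injection is required. I first partition the $K_0$ iterations into $L := \lfloor \log(3 p^{-1})/\log(0.7^{-1})\rfloor + 1$ consecutive epochs, each of length $K_o$. For each epoch, I aim to show that the probability of escaping $\cB(\x^0, B)$ within that epoch is at least $0.3$, conditional on not having escaped in prior epochs. By the strong Markov property (the noise across epochs is independent of the past given the current state), the probability of failing in all $L$ epochs is at most $0.7^L \leq p/3$, giving the desired bound. Throughout, let $\v$ be the unit eigenvector of $\nabla^2 f(\x^0)$ realizing $\lambda_{\min}$; the parameter choice gives $\rho B = \delta/\tC_1 \leq \delta_2/32$, so $\v^\top \nabla^2 f(\x) \v \leq -\delta_2 + \rho B \leq -\delta_2/2$ for every $\x \in \cB(\x^0, B)$, and the negative-curvature direction $\v$ persists across all epochs of one outer attempt.

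\textbf{Single-epoch reduction to stuck-set narrowness.} Fix an epoch starting at time $k_0$ with $\x^{k_0} \in \cB(\x^0, B)$, and condition on $\bzeta^{k_0+2}, \ldots, \bzeta^{k_0+K_o}$. Define the (random) stuck set
\[
\cS \;:=\; \bigl\{ \y \in \RR^d : \text{SGD with initial state } \y \text{ and this noise keeps } \x^k \in \cB(\x^0, B) \text{ for all of the next } K_o - 1 \text{ steps}\bigr\}.
\]
Because $\x^{k_0+1} = \x^{k_0} - \eta\, \nabla F(\x^{k_0}; \bzeta^{k_0+1})$ is an affine image of a $\v$-dispersive random vector, the scalability/translation remark following Definition \ref{defi:qnarrow} combined with Definition \ref{defi:vdispersive} yields $\PP(\x^{k_0+1} \in \cA) \leq 1/4$ for every $(\eta\sigma/(4\sqrt{d}),\v)$-narrow Borel set $\cA$. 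Thus, if I can show that $\cS$ is $(\eta\sigma/(4\sqrt{d}),\v)$-narrow with high probability over the conditioning noise, then the escape probability in one epoch is at least $3/4 - \text{(narrowness-failure slack)} \geq 0.3$.

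\textbf{Narrowness via coupling.} Suppose for contradiction that $\y_1, \y_2 \in \cS$ with $\y_2 - \y_1 = q\v$ for some $q > q^* := \eta\sigma/(4\sqrt{d})$. I couple two SGD sequences $\{\x^k_1\}, \{\x^k_2\}$ starting at $\y_1, \y_2$ using the common sample path $\bzeta^{k_0+2}, \ldots, \bzeta^{k_0+K_o}$, and track $\z^k := \x^k_2 - \x^k_1$, so that $\z^{k_0+1} = q\v$. A first-order Taylor expansion of $\nabla f$ yields
\[
\z^{k+1} \;=\; (\I - \eta \bar{\bH}^k)\, \z^k - \eta\, \bdelta^{k+1}, \qquad \bar{\bH}^k := \int_0^1 \nabla^2 f(\x^k_1 + s \z^k)\, ds,
\]
where $\bdelta^{k+1} := [\nabla F(\x^k_2; \bzeta^{k+1}) - \nabla F(\x^k_1; \bzeta^{k+1})] - [\nabla f(\x^k_2) - \nabla f(\x^k_1)]$ is a martingale-difference vector with $\|\bdelta^{k+1}\| \leq 2L\|\z^k\|$ by Assumption \ref{assu:smooth}. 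Projecting onto $\v$, and using that $\|\bar{\bH}^k - \nabla^2 f(\x^0)\| = O(\rho B)$ on trajectories that remain in $\cB(\x^0, B)$, I obtain the scalar lower bound
\[
z^{k+1}_\v \;\geq\; (1 + \eta \delta_2/2)\, z^k_\v \;-\; O(\eta \rho B)\, \|\z^k\| \;-\; \eta\, \langle \bdelta^{k+1}, \v\rangle.
\]

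\textbf{Amplification, contradiction, and the main obstacle.} Introducing a stopping time at which $\|\z^k\|$ would first exceed $4B$, I will induct to show that, up to that stopping time, $|z^k_\v|$ remains the dominant component of $\|\z^k\|$ while a Freedman-type inequality controls the cumulative martingale $\eta \sum_j \langle \bdelta^{j+1}, \v\rangle$ with high probability. The Hessian-Lipschitz remainder $O(\eta \rho B)\|\z^k\|$ is absorbed into the margin obtained by halving the growth base from $1 + \eta\delta_2$ to $1 + \eta\delta_2/2$. The choice $K_o = 2\log(24\sqrt{d}/\eta)\,\eta^{-1}\delta_2^{-1}$ makes $(1+\eta\delta_2/2)^{K_o} \gtrsim 24\sqrt{d}/\eta$, so the initial separation is amplified to $q^* \cdot (1+\eta\delta_2/2)^{K_o} \gtrsim 6\sigma$, which vastly exceeds $2B$ since $B \leq \min(1, \sigma/L)$ in the parameter setting. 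This contradicts $\|\z^{k_0+K_o}\| \leq 2B$ guaranteed by $\y_1, \y_2 \in \cS$, establishing narrowness. The main obstacle is executing this induction rigorously over $K_o = \tO(\ep^{-2})$ steps: one must simultaneously bound the orthogonal growth $\|\z^k_\perp\|$ (whose worst-case rate $1+\eta\delta_2$ nearly matches the diagonal), tame the Freedman tail with failure probability small enough to fit inside the $0.7^L$ budget across all epochs, and verify that the Hessian-Lipschitz nonlinearity never overtakes the exponential amplification along $\v$.
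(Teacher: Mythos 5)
Your high-level strategy — chain together $L$ epochs of length $K_o$, argue each epoch escapes with probability at least $0.3$ via a narrow-stuck-set property combined with the dispersive-noise assumption, then iterate by the Markov property to get $0.7^L \le p/3$ — matches the paper's structure, and your coupling recursion $\z^{k+1} = (\I - \eta\bar{\bH}^k)\z^k - \eta\bdelta^{k+1}$ is the same difference iteration the paper analyzes (the paper splits $\bar{\bH}^k = \nabla^2 f(\x^0) - \Db^{k}$ and rescales to $\bpsi^k = q^{-1}(1+\eta\delta_m)^{-k}\z^k$, which packages your "$|z^k_\v|$ dominates $\|\z^k\|$" and "halved growth base" bookkeeping into the two inequalities $\|\bpsi^k\|\le 2$ and $\e_1^\top\bpsi^k>1/2$).

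However, there is a genuine gap in the step where you claim the coupling argument "establishes narrowness." You define $\cS$ as a realization-dependent random set — the initial points that stay in $\cB(\x^0,B)$ under the particular noise draw $\bzeta^{k_0+2},\dots,\bzeta^{k_0+K_o}$. Your coupling plus Freedman-type bound shows that for each \emph{fixed} pair $\y_1,\y_1+q\v$ with $q\ge q^*$, with high probability at least one escapes; this is a pointwise statement and does not bound $\PP(\cS\text{ is not narrow})$, which requires all uncountably many pairs to simultaneously avoid being co-stuck, with the Freedman exceptional event differing from pair to pair. A net/union-bound rescue is hopeless here, since $\y\mapsto\w^k(\y)$ has Lipschitz constant $(1+\eta L)^{K_o}=\exp(\tO(\ep^{-1/2}))$. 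The paper avoids this by defining the stuck region \emph{deterministically} via the exit probability, $\cS_{\Ko}^{\radius}(\x^0)=\{\u:\PP(\cK_{exit}(\u)<\Ko)\le 0.4\}$: if $\u,\u+q\e_1$ were both in this set, inclusion-exclusion would force $\PP(\text{neither exits by }\Ko)\ge 2(0.6)-1=0.2$, directly contradicting the fixed-pair coupling bound $\le 0.1$ of Lemma~\ref{lemm:key}, so the set is narrow outright. Only then is dispersive noise invoked: the first SGD step lands outside this deterministic narrow set with probability $\ge 3/4$, and from outside it the conditional escape probability exceeds $0.4$, giving $\ge 0.3$. Replacing your realization-based $\cS$ with this probability-based definition repairs the proposal and brings it into agreement with the paper's proof.
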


Proposition \ref{prop:first} essentially says that assuming if the function has a negative Hessian eigenvalue $\le -\epH$ at $\x^0$, the iteration exits the $\radius$-neighborhood of $\x^0$ in $K_0 = \tilde\cO(\eta^{-1} \epH^{-1})$ steps with a high probability.

To prove Proposition \ref{prop:first}, we let $\w^k(\u)$, $k\geq 0$ be the iteration by SGD starting from a fixed $\u \in \RR^d$ using the same stochastic samples as iteration $\x^k$, i.e.
\beq\label{wku}
\w^k(\u) = \w^{k-1}(\u) - \eta \nabla F( \w^{k-1}(\u); \bzeta^{k})
.
\eeq
Obviously, we have $\x^k = \w^k(\x^0)$.
Let $\cK_{exit}(\u)$ be the first step number $k$ (a stopping time) such that $\w^k(\u)$ exits the $\radius$-neighborhood of $\x^0$.
Formally,
\beq\label{cKexit}
\cK_{exit}(\u) := \inf\{k\ge 0: \|\w^{k}(\u) - \x^0 \| > \radius\}
.
\eeq
It is easy to see from \eqref{sK0} that $\sK_0 = \cK_{exit}(\x^0)$.
Inspired from \cite{jin2017escape}, we cope with the stochasticity of gradients and define the so-called \textit{bad initialization region} as the point $\u$ initialized from which iteration $\w^k(\u)$ exits the $\radius$-neighborhood of $\x^0$ with probability $\le 0.4$:
\beq\label{bad_initS}
\cS_{\Ko}^{\radius}(\x^0)
:=
\left\{\u \in \RR^d: \PP\left(\cK_{exit}(\u)  < \Ko \right) \le 0.4 \right\}
.
\eeq
We will show that the bad initialization region $\cS_{\Ko}^{\radius}(\x^0)$ enjoys the $(q_0,\e_1)$-narrow property, where $q_0 = \frac{\sigma}{4\sqrt{d}}$.
Since the first step will provide a continuous noise as supposed by Assumption \ref{assu:noise},  with the properly selected $q_0$,  it will move the iteration out of the bad initialization region in its first step with probability $\ge 3/4$.
Repeating such an argument in a logarithmic number of rounds enables escaping to occur with high probability.

The idea is to prove the following lemma:

\begin{lemma}\label{lemm:key}
	Let the assumptions of Proposition \ref{prop:first} hold, and assume WLOG $\e_1$ be an arbitrary eigenvector of $\nabla^2 f(\x^0)$ corresponding to its smallest eigenvalue $-\delta_m$, which satisfies $\delta_m \ge \delta_2 > 0$.
	Then we have  for any fixed $q \ge q_0$ and pair of points $\u,\u+q\e_1 \in \cB(\x^0,\radius)$ that
	\beq\label{kep_rep}
	\begin{split}
		&
		\PP\left( \cK_{exit}(\u) \ge K_o
		\text{ and } \cK_{exit}(\u+q\e_1) \ge K_o \right)
		\le
		0.1
		.
	\end{split}\eeq
\end{lemma}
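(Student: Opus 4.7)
The plan is a coupling-and-contradiction argument: I construct the two SGD trajectories $\w^k(\u)$ and $\w^k(\u+q\e_1)$ on the \emph{same} noise sequence $\{\bzeta^k\}$, and study the difference process
\[
\d^k := \w^k(\u+q\e_1) - \w^k(\u), \qquad \d^0 = q\e_1.
\]
Because the two iterations share $\bzeta^{k+1}$, the recursion becomes
\[
\d^{k+1} = \d^k - \eta\bigl[\nabla F(\w^k(\u+q\e_1); \bzeta^{k+1}) - \nabla F(\w^k(\u); \bzeta^{k+1})\bigr],
\]
and by the individual gradient-Lipschitz bound \eqref{GradSmooth} the stochastic term is controlled \emph{multiplicatively} by $\|\d^k\|$ rather than by the $\sigma$-level additive noise. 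This is the key structural reason why a coupling argument can beat the naive $\sigma^2$-budget.

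Next, expand around $\x^0$ with $\Hb := \nabla^2 f(\x^0)$ and write
\[
\d^{k+1} = (\Ib - \eta \Hb)\d^k + \eta \bep^k,
\]
where $\bep^k$ splits into (a) a deterministic Hessian-error of norm $\le \rho B \|\d^k\|$, which holds on the good event $\{k < \cK_{exit}(\u)\wedge \cK_{exit}(\u+q\e_1)\}$ when both iterates are in $\cB(\x^0,B)$, and (b) a mean-zero stochastic part (sample-difference of gradients), whose norm is $\le 2L\|\d^k\|$ a.s. by \eqref{GradSmooth}. Let $\e_1$ be the bottom eigenvector with $\Hb\e_1 = -\delta_m\e_1$, and decompose $\d^k = \alpha^k \e_1 + \bm{\beta}^k$ with $\alpha^k := \langle \e_1, \d^k\rangle$. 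The scalar sequence satisfies
\[
\alpha^{k+1} = (1+\eta\delta_m)\,\alpha^k + \eta\,\langle \e_1, \bep^k\rangle,
\]
with initial value $\alpha^0 = q \ge q_0 = \sigma/(4\sqrt{d})$.

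Now I would use the parameter choice $K_o = 2\log(24\sqrt{d}/\eta)\,\eta^{-1}\delta_2^{-1}$: since $\delta_m\ge \delta_2$ and $\log(1+\eta\delta_2)\ge \eta\delta_2/2$ for $\eta\delta_2$ small,
\[
(1+\eta\delta_m)^{K_o} \;\ge\; \exp\!\Bigl(\tfrac{\eta\delta_2 K_o}{2}\Bigr) \;=\; \frac{24\sqrt{d}}{\eta},
\]
so the pure-drift lower bound would give $\alpha^{K_o} \ge q_0\cdot 24\sqrt{d}/\eta = 6\sigma/\eta$, which by the standing choice $\eta \lesssim \sigma/B$ comfortably exceeds $2B$. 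On the event in \eqref{kep_rep}, however, both trajectories lie in $\cB(\x^0,B)$ at step $K_o$, forcing $\|\d^{K_o}\|\le 2B$. Therefore the event in \eqref{kep_rep} is contained in the event that the accumulated stochastic perturbation along $\e_1$ is large enough to cancel the exponential drift, and it suffices to bound the latter probability by $0.1$.

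The main obstacle is making this perturbation bound rigorous: the stochastic part of $\bep^k$ is state-dependent (its a.s.\ norm bound is proportional to $\|\d^k\|$), so the weighted sum $M_k := \sum_{j<k} \eta(1+\eta\delta_m)^{k-1-j}\langle \e_1,\bep^j_{\mathrm{stoch}}\rangle$ is a martingale with increments scaled by the very quantity we are trying to control. I would handle this by an inductive/stopping-time argument: define $\tau$ as the minimum of $\cK_{exit}(\u)$, $\cK_{exit}(\u+q\e_1)$, $K_o$, and the first step where either $\|\bm{\beta}^k\|$ or the perturbation accumulated in $\alpha$ exceeds $\tfrac12\alpha^k$. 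On the stopped process, the increment bound is deterministic (in terms of the known drift envelope), and a Freedman/Azuma inequality — whose deviation constant is absorbed by the $\log(48K_0/p)$ factor built into $\eta$ in \eqref{parametersetting} — yields that the martingale stays below the drift with probability $\ge 0.9$, closing the contradiction. The orthogonal component $\bm{\beta}^k$ is handled in parallel: it is contracted (or only mildly expanded) by $\Ib-\eta\Hb$ outside the $\e_1$ eigenspace, so an analogous (cruder) concentration bound shows $\|\bm{\beta}^k\|\le \tfrac12\alpha^k$ on the same good event, legitimizing $\|\d^{K_o}\|\ge \alpha^{K_o}/\sqrt{1+1/4} > 2B$.
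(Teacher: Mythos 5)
Your proposal is essentially the paper's argument: you couple the two SGD trajectories on a shared noise sequence, study the difference process, project onto the bottom eigendirection $\e_1$, show the drift $(1+\eta\delta_m)^{K_o}$ pushes that coordinate well past $2B$, and control the state-dependent martingale noise (increments $\lesssim L\|\d^k\|$) by truncation so that Azuma applies. The paper implements the truncation via indicators $\cI_{\|\bpsi^{k-1}\|\leq 2}$ on a rescaled process $\bpsi^k = q^{-1}(1+\eta\delta_m)^{-k}\z^k$, and tracks $\|\bpsi^k\|$ and $\e_1^\top\bpsi^k$ simultaneously by induction; your stopping-time version plays the identical role. Two small slips worth flagging: the correct $q_0$ in this lemma is $\eta\sigma/(4\sqrt d)$ (the factor of $\eta$ comes from mapping the stuck-region's narrow width back through one SGD step), so the pure-drift lower bound is $\approx 6\sigma$, not $6\sigma/\eta$ (still $>3B$, the conclusion is unaffected); and your final sentence has the Pythagoras inequality backwards — the bound $\|\bm\beta^k\|\le \tfrac12\alpha^k$ is what you need for an \emph{upper} bound $\|\d^k\|\le(\sqrt5/2)\alpha^k$, which controls the noise scale in the induction, while the lower bound $\|\d^{K_o}\|\ge\alpha^{K_o}$ needed for the contradiction is free. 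Neither slip changes the structure; the argument closes.
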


Lemma \ref{lemm:key} is inspired from Lemma 15 in \cite{jin2017escape}.  Nevertheless due to the noise brought in at each update step, the analysis of stochastic gradient differs from that of the gradient descent in many  aspects. For example,  instead of showing the decrease of function value, we need to show that  with a positive probability, at least one of the two iterations, $\w^k(\u+q\e_1)$ or $\w^k(\u)$, exits the $\radius$-neighborhood of $\x^0$.  Our proof is also more intuitive compared with  Lemma 15 in \cite{jin2017escape}. The core idea is to focus on   analyzing the  difference trajectory for $\w^k(\u+q\e_1)$ and $\w^k(\u)$, and  to show that the rotation speed for the difference trajectory  is the same as the expansion speed.  Detailed proof is provided in \S\ref{sec:proof,lemm:key}.

\subsection{Part \mbox{II}: Faster Descent}\label{2Sufficient Descent}

The goal of Part \mbox{II} is to prove the following proposition:
\begin{proposition}[Faster Descent]\label{faster descent}
	For  Algorithm \ref{algo:SGD} with parameter set in \eqref{parametersetting}. With probability at least $1-\frac{2}{3}p$,  if $\x^k$ moves out of  $\cB(\x^{0}, B)$ in $K_0$ iteration, we have
	\begin{eqnarray}
	f\left(\x^{\sK_0}\right) \leq f\left(\x^{0}\right) - \frac{B^2}{7\eta K_0}.
	\end{eqnarray}	
\end{proposition}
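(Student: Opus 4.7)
}
The plan is to combine the gradient-Lipschitz descent lemma with the exit condition $\|\x^{\sK_0} - \x^0\| > B$, using vector and scalar martingale concentrations to control the SGD noise; the step-size bound in \eqref{parametersetting} is precisely calibrated so that the stochastic fluctuations do not overwhelm the gradient-driven descent.

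I would first write $\g^k := \nabla F(\x^k;\bzeta^{k+1}) = \nabla f(\x^k) + \bxi^{k+1}$, where $\bxi^{k+1}$ is a martingale difference with $\|\bxi^{k+1}\| \le \sigma$ a.s.\ (Assumption \ref{assu:noise}). Applying the standard descent lemma to each SGD step, expanding $\|\g^k\|^2 \le 2\|\nabla f(\x^k)\|^2 + 2\|\bxi^{k+1}\|^2$, and using $L\eta \le 1/2$ (a consequence of \eqref{parametersetting} together with $B \le 1/L$), summing from $k=0$ to $\sK_0-1$ yields the master inequality
\begin{equation*}
f(\x^{\sK_0}) - f(\x^0) \;\le\; -\tfrac{\eta}{2}\sum_{k<\sK_0}\|\nabla f(\x^k)\|^2 - \eta\sum_{k<\sK_0}\langle \nabla f(\x^k), \bxi^{k+1}\rangle + L\eta^2\sum_{k<\sK_0}\|\bxi^{k+1}\|^2.
\end{equation*}

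Next, I would exploit the exit condition. Writing $\x^{\sK_0} - \x^0 = -\eta\sum_k \g^k$ and applying a vector Azuma--Hoeffding bound to the martingale $\sum_k \bxi^{k+1}$ gives, with probability $\ge 1 - p/3$, $\eta\|\sum_k \bxi^{k+1}\| \le B/4$; here \eqref{parametersetting} ensures $\eta^2 K_0 \sigma^2 \log(1/p) \ll B^2$. Hence $\eta\|\sum_k \nabla f(\x^k)\| \ge 3B/4$, and Cauchy--Schwarz with $\sK_0 \le K_0$ yields $\sum_{k<\sK_0}\|\nabla f(\x^k)\|^2 \ge 9B^2/(16\eta^2 K_0)$. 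The cross-term scalar martingale, with conditional variance $\eta^2\sigma^2\sum_k\|\nabla f(\x^k)\|^2$, is controlled by a Freedman/Bernstein-type inequality and absorbed via Young's inequality into a small fraction of $\eta\sum_k\|\nabla f(\x^k)\|^2$ (say $\eta/10$), with probability $\ge 1-p/3$. The term $\sum_k\|\bxi^{k+1}\|^2 \le \sigma^2 K_0$ is controlled deterministically by Assumption \ref{assu:noise}. Substituting back into the master inequality and invoking the step-size bound \eqref{parametersetting} to ensure $L\eta^2\sigma^2 K_0$ stays a small fraction of $B^2/(\eta K_0)$ gives $f(\x^{\sK_0}) - f(\x^0) \le -B^2/(7\eta K_0)$; a union bound on the two concentration events yields the overall probability $\ge 1 - 2p/3$.

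\textbf{Main obstacle.} The core difficulty is calibrating the leftover noise contribution $L\eta^2\sigma^2 K_0$ to stay comfortably below the gradient descent $\asymp B^2/(\eta K_0)$; this requires carefully tracking both poly-log factors in \eqref{parametersetting} (namely $\log(48K_0/p)$ and $3+\log K_0$) and the cubic dependence on $\tC_1$, while exploiting $B \le 1/L$ to absorb $LB^2$ factors into $B$. A secondary subtlety is the Freedman concentration for the cross-term $\sum\langle\nabla f(\x^k), \bxi^{k+1}\rangle$, whose variance proxy is itself a random sum: a stopping-time-adapted Bernstein bound, or a grid-based union bound over dyadic ranges of $\sum\|\nabla f(\x^k)\|^2$, handles this at only a logarithmic cost.
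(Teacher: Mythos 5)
Your master inequality, obtained from the gradient-Lipschitz descent lemma with $\|\nabla F(\x^k;\bzeta^{k+1})\|^2 \le 2\|\nabla f(\x^k)\|^2 + 2\|\bxi^{k+1}\|^2$, leaves a noise floor $L\eta^2\sum_{k<\sK_0}\|\bxi^{k+1}\|^2 \le L\eta^2\sigma^2 K_0$. With the parameters \eqref{parametersetting} ($\eta\asymp\ep^{1.5}$, $K_0\asymp\ep^{-2}$, $B\asymp\ep^{0.5}$) this is $L\eta^2\sigma^2 K_0 \asymp \ep$, whereas the target descent is $B^2/(\eta K_0) \asymp \ep^{1.5}$. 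The ratio is $\asymp \ep^{-1/2}$, so the noise floor polynomially \emph{dominates} the gradient-driven descent; no bookkeeping of poly-log factors or absorption of $L$ via $B\le 1/L$ can close a polynomial gap. You would need $\eta\asymp\ep^2$ for your argument to go through, which is exactly the classical $\cO(\ep^{-4})$ regime the proposition is designed to improve upon. The paper itself spells this out in the proof sketch for Part II: it first re-derives your master inequality as ``the more traditional approach'' and observes that it forces $\eta = \cO(\ep^2)$.

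The paper's proof of Proposition \ref{faster descent} avoids this by never touching the raw $L\eta^2\|\bxi^{k+1}\|^2$ term except in a case where it is negligible. It splits into a large-gradient case ($\|\nabla f(\x^0)\| > 5\sigma$), where your argument essentially does work because $\sigma^2$ is a small fraction of $\|\nabla f(\x^k)\|^2$ per iteration and can be absorbed; and a small-gradient case, where it replaces the descent lemma by a quadratic model $g = g_\cS + g_{\cS\perp}$ of $f$ around $\x^0$. On the nonpositive-curvature subspace $\cS^\perp$ it uses $\bH_{\cS\perp}\preceq 0$ to drop the second-order term entirely, so the $L\eta^2\sigma^2 K_0$ penalty never appears (Lemma \ref{analysis on vv}). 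On the positive-curvature subspace $\cS$ it introduces the deterministic GD trajectory $\y^k$, tracks the difference $\z^k = \uu^k-\y^k$ through the explicit linear recursion, and uses the spectral bound $\|(\I-\eta\bH_\cS)^i\bH_\cS(\I-\eta\bH_\cS)^j\|\le \tfrac{1}{\eta(i+j+1)}$ (equation \eqref{basic2}) so that the accumulated noise energy $(\z^{\sK})^\top\bH_\cS\z^{\sK}$ scales like $\eta\sigma^2\log K_0$ rather than $\eta^2\sigma^2 K_0$ (Lemma \ref{zz}). The third-order Taylor remainder $\rho B^3\asymp\ep^{1.5}$ is then absorbed. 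This Hessian-smoothness-driven decomposition is the essential new idea that your plan is missing, and without it the proposition cannot be proved at $\eta\asymp\ep^{1.5}$.
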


Proposition \ref{faster descent} is the key for  SGD to achieve the reduced $\tO(\ep^{-3.5})$ stochastic computation costs.
It shows that no matter what does the local surface of $f(\x)$ look like, once $\x^k$ moves out of the ball in $K_0\asymp \ep^{-2}$ iterations, the function value shall decrease with a magnitude of at least $\tilde{O}(\ep^{1.5})$.  To put it differently,   on average, the function value  decreases  at least $\tO(\epsilon^{3.5})$ per-iteration during the execution of Algorithm \ref{algo:SGD}. We will present the basic argument below.

We start with reviewing the more traditional approach for proving sufficient descent of SGD, and then we will discuss how to improve it as done in this work.  
The previous approaches are all based on the idea of \citep{NESTEROV}, which   mainly takes advantage of  the  gradient-smoothness condition of the objective.  The proof can be briefly described below:
\begin{eqnarray}
\E_{\bzeta}  f(\x^{k+1}) &\leq& f(\x^k) + \E_{\bzeta} \<\nabla f(\x^k), \x^{k+1} - \x^{k} \> + \frac{L}{2}\E_{\bzeta}\| \x^{k+1} - \x^k\|^2 \notag\\
&\overset{\eqref{SGD}}{=}& f(\x^k)  - \left(\eta - \frac{L\eta^2}{2}\right) \| \nabla f(\x^k)\|^2 + \frac{\eta^2L}{2}\E_{\bzeta}\| \nabla F(\xb^{k}; \bzeta^{k+1}) - \nabla f(\x^k)\|^2\label{tra1}\notag\\
&\overset{\text{Assum.} \ref{assu:noise}}{\leq}& f(\x^k)  - \left(\eta - \frac{L\eta^2}{2}\right) \| \nabla f(\x^k)\|^2 + \frac{\eta^2L \sigma^2}{2}. \label{tra}
\end{eqnarray}
From the above derivation, in order to guarantee the monotone descent of function value in expectation, the step size $\eta$ needs to be
\begin{eqnarray}\label{step size}
\eta = \cO \left(\frac{ \| \nabla f(\x^k)\|^2}{L \sigma^2}\right) = \cO(\epsilon^{2}),
\end{eqnarray}
where the last equality uses $\left\| \nabla f(\x^k)\right\|\geq \ep$.
Plugging  \eqref{step size} into \eqref{tra}, and using $\| \nabla f(\x^k)\|\geq \ep$, we have that the function value per-iteration would descent with a magnitude of at least $\cO(\epsilon^{4})$. Such result indicates that, in the worse case, SGD  takes $\cO(\epsilon^{-4})$ stochastic oracles to find an $\ep$-approximate first-order stationary point. This simple argument is the reason why previous works conjectured 
that the complexity of SGD is $\cO(\epsilon^{-4})$.

However, in this paper, we show that the above analysis can be further improved by using the Hessian-smoothness condition of the objective, and by considering the
decomposition of objective function $f(\x)=f_+(\x)+ f_-(\x)$, and treating component $f_+(\x)$ and component $f_-(\x)$ separately as follows:
\begin{itemize}
	\item (Case 1) The component $f_+(\x)$ is near convex locally, in the sense that
	$\lambda_{\min}\left(\nabla^2 f(\x)\right) \geq -  \Omega(\ep^{0.5})$  for all $\x\in\cB(\x^0, B)$. In this case, by using techniques for near convex problems, it is possible for us to take a larger stepsize $\eta=\cO(\epsilon^{1.5})$ and prove  a faster convergence rate.
	\item (Case 2) The component $f_-(x)$ is near concave locally,  in the sense that $\lambda_{\max}\left(\nabla^2 f(\x)\right) \leq   \cO(\ep^{0.5})$ for all $\x\in\cB(\x^0, B)$ .  In this case, 
	It can be shown that the last term on the right hand side  of \eqref{tra} can be reduced to $\cO(\eta^2\ep^{0.5}\sigma^2)$. Therefore
	the step size can be chosen as $\eta=\cO(\ep^{1.5})$, leading to a fast function value reduction.
\end{itemize}
To formalize the above  observations into a rigorous proof, in this paper we introduce the quadratic approximation of $f(\x)$ at point $\x^0$, defined as
\begin{eqnarray}
g(\x) \coloneqq  \left[\nabla f(\x^{0})\right]^\bT\left(\x - \x^{0}\right) +  \frac{1}{2}\left[\x - \x^{0}\right]^\bT \nabla^2 f(\x^{0}) \left[\x - \x^{0}\right].
\end{eqnarray}  
We let   $\cS$ be the subspace spanned by all eigenvectors of $\nabla^2 f(\x^{0})$ whose eigenvalue is greater than $0$, and $\cS\bot$ denotes the complement space.  Also let $\cP_{\cS} \in \RR^{d\times d }$ and $\cP_{\cS\bot}\in \RR^{d\times d }$ as the projection matrices onto the space of $\cS$ and $\cS\bot$, respectively.  Also  let  the full SVD decomposition of $\nabla^2 f(\x^{0})$ be $\mathbf{V}\sum \mathbf{V}^T$. We introduce $\bH_{\cS} = \mathbf{V}\sum_{(\lambda_i >0)}\mathbf{V}^T$ and  $\bH_{\cS\bot} = \mathbf{V}\sum_{(\lambda_i \leq0)}\mathbf{V}^T$ respectively,  and define the following two auxiliary functions $g_{\cS}:\cS\to \RR$ and  $g_{\cS\bot}:\cS\bot\to \RR$:
\begin{eqnarray}
g_{\cS}(\uu)\coloneqq   \left[\cP_{\cS}\nabla f\left(\x^{0}\right)\right]^\bT\uu +  \frac{1}{2}\uu^T \bH_{\cS} \uu,
\end{eqnarray}
and 
\begin{eqnarray}
g_{\cS\bot}(\vv) \coloneqq   \left[\cP_{\cS\bot}\nabla f\left(\x^{0}\right)\right]^\bT\vv +  \frac{1}{2}\vv^T \bH_{\cS\bot} \vv .
\end{eqnarray}
For the previously mentioned decomposition of $f(\x)=f_+(\x)+f_-(\x)$, one may simply take
$f_+(\x)=f(\cP_{\cS} x)$, and let $f_-(\x)= f(\x)-f_+(\x)$. It can be checked that
$f_+(\cdot)=f_+(\x^0)+ g_{\cS}(\cdot) + \tO(\epsilon^{1.5})$ and $f_-(\cdot)=f_-(\x^0)+g_{\cS\bot}(\cdot)+\tO(\epsilon^{1.5})$. It follows that we only need to separately
analyze the two quadratic approximations $g_{\cS}(\cdot)$ and $g_{\cS\bot}(\cdot)$.
We then bound the difference between $f(\x^{\sK_0})$ and $g_{\cS}(\x^{\sK_0} - \x^0) + g_{\cS\bot}(\x^{\sK_0}-\x^0)+f(\x^0)$ as $\tO(\epsilon^{1.5})$.

The analysis for  $g_{\cS\bot}(\cdot)$  can be obtained via the standard analysis  informally described above in Case~2 (Refer to Lemma \ref{analysis on vv}).  

Our proof technique for dealing with $g_{\cS}(\cdot)$ is to introduce an auxiliary trajectory with the following deterministic updates for $k= 0,1, 2, \dots,$ as:
\begin{eqnarray}
\y^{k+1} = \y^k - \eta \nabla g_{\cS}\left(\y^k\right),
\end{eqnarray}
and $\y^{0} = \mathbf{0}$.  We then track and analyze the difference trajectory between $\cP_{\cS} \left(\x^{\sK_0} - \x^{0}\right)$ and $\y^{\sK_0}$ (Refer to Lemma \ref{zz}).  In the sense that $\y^k$ simply performs Gradient Descent, we can arrive our final results for $g_{\cS}(\cdot)$ (Refer to Lemma \ref{analysis on uu}), which leads to a rigorous statement of Case~1.

Finally, via the fact that $\x^k$ moves out of the ball in $K_0$ iteration throughout the execution of Algorithm \ref{algo:SGD},  we prove that with high probability  the sum for the norm of gradients can be  lower bounded as:
\begin{eqnarray}\label{2endim2}
&& \sum_{k = 0}^{\sK_0-1}  \left\| \nabla g_{\cS\bot}\left(\x^{k} - \x^0\right)  \right\|^2+\sum_{k=0}^{\sK_0-1}\left\|  \nabla  g_{\cS} \left(\y^k\right)\right\|^2 =  \tilde{\Omega}(1),
\end{eqnarray}
which ensures sufficient descent of  the function value. By putting the above arguments
together, we can obtain Proposition \ref{faster descent}.

\subsection{Part \mbox{III}: Finding SSP} 
Part \mbox{III}  proves the following proposition:
\begin{proposition}\label{local convergence}
	With probability of at least $1-p$, if $\x^k$ has not moved out of the ball in $K_0$ iterations, then let $\bar{\x} =\sum_{k=0}^{K_0-1}\x^k$,  we have
	\begin{eqnarray}
	\|\nabla f(\bar{\x})\| \le  18 \rho B^2\asymp \ep
	,\qquad
	\lambda_{\min}(\nabla^2 f(\bar{\x})) \ge - 17\delta \asymp  -\sqrt{\rho\ep}
	.
	\end{eqnarray}
\end{proposition}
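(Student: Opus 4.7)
My plan is to split the proof into two independent bounds on $\bar{\x}$: the Hessian minimum-eigenvalue bound and the gradient-norm bound, each with a controlled failure probability that we ultimately union-bound to at most $p$.

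For the Hessian bound, I would invoke the contrapositive of Proposition \ref{prop:first}. Since $\lambda_{\min}(\nabla^2 f(\x^0))$ is deterministic in $\x^0$, Proposition \ref{prop:first} says: if $\lambda_{\min}(\nabla^2 f(\x^0)) \le -\delta_2$, the iterates escape $\cB(\x^0,B)$ within $K_0$ steps with probability $\ge 1-p/3$. Thus on the event that the iterates do \emph{not} escape, we must have $\lambda_{\min}(\nabla^2 f(\x^0)) > -\delta_2 = -16\delta$ outside a failure branch of probability $\le p/3$. Since $\bar{\x}$ is a convex combination of points in $\cB(\x^0,B)$, Hessian-Lipschitz continuity (Assumption \ref{assu:smooth}) yields
\[
\lambda_{\min}(\nabla^2 f(\bar{\x})) \;\ge\; \lambda_{\min}(\nabla^2 f(\x^0)) - \rho\|\bar{\x}-\x^0\| \;\ge\; -16\delta - \rho B \;\ge\; -17\delta,
\]
where the last inequality uses $\rho B = \delta/\tC_1 \le \delta$ from \eqref{parametersetting}.

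For the gradient bound, I would combine a telescoping identity with a Taylor expansion around $\bar{\x}$. Let $\bxi^{k+1} := \nabla F(\x^k;\bzeta^{k+1}) - \nabla f(\x^k)$, which is a martingale difference sequence with respect to $\cF^k$ satisfying $\|\bxi^{k+1}\| \le \sigma$ a.s.\ under Assumption \ref{assu:noise}. Summing the SGD update from $k=0$ to $K_0-1$ gives
\[
\x^{K_0} - \x^0 \;=\; -\eta\sum_{k=0}^{K_0-1}\bigl(\nabla f(\x^k) + \bxi^{k+1}\bigr),
\]
so that
\[
\frac{1}{K_0}\sum_{k=0}^{K_0-1}\nabla f(\x^k) \;=\; -\frac{\x^{K_0}-\x^0}{\eta K_0} - \frac{1}{K_0}\sum_{k=0}^{K_0-1}\bxi^{k+1}.
\]
Under the non-escape event we have $\|\x^{K_0}-\x^0\|\le B$, so the first term is bounded in norm by $B/(\eta K_0) = 16\rho B^2$ by the identity $\eta K_0 = \tC_1\delta_2^{-1}$ and $B = \delta/(\rho\tC_1)$. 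For the martingale term, a vector-valued Azuma--Hoeffding inequality (applicable under either the bounded or subgaussian version of Assumption \ref{assu:noise}) yields $\|\sum_{k=0}^{K_0-1}\bxi^{k+1}\| \le \cO(\sigma\sqrt{K_0\log(d/p)})$ with probability $\ge 1-2p/3$, whence $K_0^{-1}\|\sum\bxi^{k+1}\| \le \tO(\sigma\sqrt{\log(d/p)/K_0})$. The stepsize constraint in \eqref{parametersetting} is calibrated precisely so that this noise contribution is bounded by $\rho B^2$. Finally, a second-order Taylor expansion with Hessian-Lipschitz remainder gives $\nabla f(\x^k) = \nabla f(\bar{\x}) + \nabla^2 f(\bar{\x})(\x^k-\bar{\x}) + \cO(\rho\|\x^k-\bar{\x}\|^2)$; averaging and using $\sum_{k=0}^{K_0-1}(\x^k-\bar{\x}) = \mathbf{0}$ eliminates the linear term, yielding $K_0^{-1}\sum\nabla f(\x^k) = \nabla f(\bar{\x}) + \cO(\rho B^2)$. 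Combining the three pieces gives $\|\nabla f(\bar{\x})\| \le 18\rho B^2$, and a union bound completes the $1-p$ guarantee.

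The main obstacle I anticipate is the quantitative calibration of the martingale concentration: the $\sqrt{\log(d/p)}$ factor, together with the fact that $\sigma$ enters in the stepsize denominator of \eqref{parametersetting}, must be tracked carefully so that the noise term fits within the gap between $16\rho B^2$ (from the telescoping term) and $18\rho B^2$ (the target). The logarithmic corrections in the definition of $\eta$ are introduced for exactly this purpose, but threading them through the vector Azuma bound---especially if one uses the subgaussian form \eqref{bound2} rather than the bounded form \eqref{bounded}---requires invoking an appropriate Banach-space martingale inequality (e.g., a Hilbert-space version of Freedman's or Hayes' inequality). The Hessian part is comparatively routine once Proposition \ref{prop:first} is in hand, and the Taylor-expansion step is straightforward since $\sum(\x^k-\bar{\x}) = \mathbf{0}$ exactly by construction.
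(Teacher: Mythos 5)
Your proposal follows essentially the same strategy as the paper: the Hessian bound is obtained by contraposing Proposition~\ref{prop:first} (non-escape $\Rightarrow$ $\lambda_{\min}(\nabla^2 f(\x^0)) \ge -\delta_2$ with probability $\ge 1-p/3$) and then applying Hessian-Lipschitzness, and the gradient bound is obtained by telescoping the SGD recursion to express $\frac{1}{K_0}\sum_k\nabla f(\x^k)$ in terms of $(\x^{K_0}-\x^0)/(\eta K_0)$ plus a martingale sum, controlled by the vector-valued Azuma inequality (Theorem~\ref{proh}).

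The one place you diverge from the paper is in how you pass from the averaged gradient $\frac{1}{K_0}\sum_k \nabla f(\x^k)$ to $\nabla f(\bar{\x})$. You Taylor-expand $\nabla f$ around $\bar{\x}$ and exploit $\sum_k(\x^k-\bar{\x})=\mathbf{0}$ to kill the linear term, leaving a remainder $\le \frac{\rho}{2}\max_k\|\x^k-\bar{\x}\|^2$. The paper instead introduces the quadratic model $g$ centered at $\x^0$ and uses linearity of $\nabla g$ so that $\nabla g(\bar{\x}) = \frac{1}{K_0}\sum_k\nabla g(\x^k)$, paying $\|\nabla f(\x^k)-\nabla g(\x^k)\|\le\rho B^2/2$ uniformly over the ball plus $\|\nabla f(\bar{\x})-\nabla g(\bar{\x})\|\le\rho B^2/2$. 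These two devices are mathematically equivalent — both exploit ``gradient of average equals average of gradients'' for quadratics — but your version costs more because $\|\x^k-\bar{\x}\|$ can be as large as $2B$ (both $\x^k$ and $\bar{\x}$ are within $B$ of $\x^0$, not of each other), giving a Taylor remainder of up to $2\rho B^2$ rather than the paper's combined $\rho B^2$. With your accounting the final constant comes out to roughly $19\rho B^2$ instead of the stated $18\rho B^2$; this is only a constant discrepancy and not a conceptual gap, but to recover the exact constant you should center the quadratic comparison at $\x^0$ as the paper does (or accept a slightly larger numerical factor in the statement).

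One small remark: you correctly use $\x^{K_0}-\x^0$ in the telescoping identity, whereas the paper's displayed equation writes $\x^{K_0-1}-\x^0$, which appears to be a typo (the sum $\sum_{k=0}^{K_0-1}\eta\nabla f(\x^k)$ telescopes to $\x^0-\x^{K_0}$ modulo noise). Since both $\|\x^{K_0}-\x^0\|\le B$ and $\|\x^{K_0-1}-\x^0\|\le B$ under the non-escape event, this does not affect the bound.
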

Proposition \ref{local convergence} can be obtained via  the same idea of  Part \mbox{II}. We first study the quadratic approximation function $g(\bar{\x})$ and then bound the difference between $g(\bar{\x})$ and $f(\bar{\x})$. 

Finally, integrating Proposition \ref{prop:first},  \ref{faster descent}, and \ref{local convergence}, and using the boundedness of the function value  in Assumption \ref{assu:main}, we know with probability at least $1- (T_1+1) p$, Algorithm \ref{algo:SGD} shall stop before $T_0$ steps, and output an approximate second-order stationary point satisfying \eqref{ESSP}, which immediately leads to Theorem \ref{theo:main}.


\section{ Concentration Inequalities}\label{sec:pinelis}
In our proofs, concentration inequalities are fundamental to obtain the high-probability result.  Before we prove our results, we introduce the following two (advanced) inequalities which  will be used in our proofs.

\subsection{Vector-Valued Concentration Inequality}

\begin{theorem}[Vector-Martingale Azuma–Hoeffding, Theorem 3.5 in \citet{pinelis1994optimum}]\label{proh}
	Let $\bm\ep_{1:K} \in \RR^d$ be a vector-valued martingale difference sequence with respect to $\cF^k$, i.e. for each $k=1,\dots,K$, $\EE[\bm\ep_k \mid \cF^{k-1}] = 0$ and $\|\ep_k\|^2 \leq B_k^2$. We have 
	\beq\label{azumain}
	\PP \left(
	\left\|
	\sum_{k=1}^{K} \bm\ep_k
	\right\|
	\ge \lambda \right)
	\le
	4 \exp \left(
	-\frac{\lambda^2}{4 \sum_{k=1}^{K} B_k^2} 
	\right),
	\eeq
	where $\lambda$ is an arbitrary real positive number.
\end{theorem}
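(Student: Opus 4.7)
The plan is a Chernoff-type bound in Hilbert space obtained by constructing an exponential supermartingale from $\|S_k\|$, where $S_k := \sum_{i=1}^{k} \bm\ep_i$ and $V_K := \sum_{k=1}^{K} B_k^2$. Since $\|\cdot\|$ is not a linear functional of $\bm\ep_k$, the natural object to exponentiate is the convex function $\cosh(\alpha\|S_k\|)$: it is convex on the Hilbert space (increasing convex scalar composed with the norm), and it dominates $e^{\alpha\|S_k\|}/2$, so controlling its expectation controls the tail of $\|S_K\|$. A free parameter $\alpha>0$ will be chosen at the end by optimization.

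The core technical step is the one-step inequality
\[
\EE\!\bigl[\cosh(\alpha\|S_k\|)\,\big|\,\cF^{k-1}\bigr]\;\le\;\cosh(\alpha\|S_{k-1}\|)\,\exp\!\bigl(\alpha^2 B_k^2\bigr).
\]
To prove it, condition on $\cF^{k-1}$ so that $S_{k-1}$ is deterministic, and decompose $\bm\ep_k = Y_k\,u + W_k$ with $u := S_{k-1}/\|S_{k-1}\|$ (any fixed unit vector if $S_{k-1}=0$), $Y_k := \langle u,\bm\ep_k\rangle$, and $W_k \perp u$. The Hilbert identity then yields $\|S_k\|^2 = (\|S_{k-1}\|+Y_k)^2 + \|W_k\|^2$, so the elementary bound $\cosh(\sqrt{a^2+b^2}) \le \cosh(a)\cosh(b)$ (provable by Taylor-coefficient comparison using $\binom{2n}{2k}\ge\binom{n}{k}$) gives the pointwise estimate $\cosh(\alpha\|S_k\|)\le\cosh(\alpha(\|S_{k-1}\|+Y_k))\cdot\cosh(\alpha\|W_k\|)$. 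Because $\|W_k\|\le B_k$ deterministically and $Y_k$ is conditionally mean-zero with $|Y_k|\le B_k$, the scalar Hoeffding lemma bounds the conditional expectation of the first factor by $\cosh(\alpha\|S_{k-1}\|)\,\exp(\alpha^2 B_k^2/2)$, while $\cosh(\alpha\|W_k\|)\le\exp(\alpha^2 B_k^2/2)$ handles the second. Iterating from $k=1$ to $K$ yields $\EE[\cosh(\alpha\|S_K\|)] \le \exp(\alpha^2 V_K)$.

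Markov's inequality together with $\cosh(\alpha\lambda)\ge e^{\alpha\lambda}/2$ then delivers $\PP(\|S_K\|\ge\lambda) \le 2\exp(\alpha^2 V_K - \alpha\lambda)$, and the choice $\alpha = \lambda/(2V_K)$ produces $2\exp(-\lambda^2/(4V_K))$, which is strictly inside the stated bound $4\exp(-\lambda^2/(4V_K))$ of \eqref{azumain}; the leading factor $4$ in the statement absorbs the mild slack in the Markov step, and in particular accommodates the alternative route of applying the scalar inequality separately to $\|S_K\|$ after a two-sided symmetrization with a union bound. The principal obstacle is the one-step inequality: one must use the orthogonal decomposition along $S_{k-1}$ together with the sharp coefficient comparison $\cosh(\sqrt{a^2+b^2})\le\cosh(a)\cosh(b)$, since the naive triangle-inequality route $\|S_k\|\le\|S_{k-1}\|+\|\bm\ep_k\|$ leaves an unabsorbable cross term $\sinh(\alpha\|S_{k-1}\|)\sinh(\alpha B_k)$ that would prevent the supermartingale structure from closing.
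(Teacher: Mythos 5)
Your proof is correct, and it is worth noting that the paper itself offers no proof of this statement: it is quoted verbatim as Theorem 3.5 of Pinelis (1994), with pointers to Kallenberg--Sztencel and Zhang for alternative derivations. Your argument is therefore a self-contained replacement for that citation, and it checks out in every step. The orthogonal decomposition $\bm\ep_k = Y_k u + W_k$ along $u = S_{k-1}/\|S_{k-1}\|$ gives the exact Pythagorean identity $\|S_k\|^2 = (\|S_{k-1}\|+Y_k)^2+\|W_k\|^2$; the coefficient comparison $\binom{2n}{2k}\ge\binom{n}{k}$ (e.g.\ via Vandermonde) does establish $\cosh(\sqrt{a^2+b^2})\le\cosh(a)\cosh(b)$; the factor $\cosh(\alpha\|W_k\|)\le\exp(\alpha^2 B_k^2/2)$ is a deterministic bound, so it can be pulled out of the conditional expectation even though $W_k$ and $Y_k$ are dependent; and writing $\cosh(\alpha(\|S_{k-1}\|+Y_k))$ as a half-sum of two exponentials and applying Hoeffding's lemma to each (using that $u$, hence $Y_k$'s conditioning structure, is $\cF^{k-1}$-measurable and $|Y_k|\le B_k$) closes the supermartingale recursion. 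You correctly flag that the naive triangle-inequality route fails because of the $\sinh\sinh$ cross term — that is precisely why the Hilbert-space structure (rather than a general Banach norm) is needed here, and it is the reason Pinelis' general result requires $2$-smoothness of the space. Your final bound $2\exp(-\lambda^2/(4\sum_k B_k^2))$ is in fact sharper than the constant $4$ in \eqref{azumain}, so the stated inequality follows a fortiori; the only cosmetic caveat is that when $S_{k-1}=0$ the reference unit vector must be chosen $\cF^{k-1}$-measurably, which any deterministic choice accomplishes.
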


Theorem \ref{proh} is not a straightforward derivation of one-dimensional Azuma's inequality. Because  the bound on the right hand of \eqref{azumain} is \textit{dimension-free}. Such result might be first found by \citet{pinelis1994optimum}. See also \citet{kallenberg1991some}, Lemma 4.4 in \citet{zhang2005learning} or Theorem 2.1 in \cite{zhang2005learning} and the references therein.

\subsection{Data-Dependent Concentration Inequality}
\begin{theorem}[Date-Dependent Concentration Inequality, Lemma $3$ in \citet{rakhlin2012making}]\label{proh2}
	Let $\ep_{1:K} \in \RR$ be a  martingale difference sequence with respect to $\cF^k$, i.e. for each $k=1,\dots,K$, $\EE[\bm\ep_k \mid \cF^{k-1}] = 0$, and $$\EE[\ep_k^2\mid \cF^{k-1}] \leq \sigma^2_k.$$
	Furthermore, assume that $\PP(\|\ep_k\|\leq b\mid \cF^{k-1}) = 1$.  Let $V_K^2 = \sum_{k=1}^K \sigma_k^2$,  for any $\delta<1/e$ and $K\geq 4$, we have 
	\begin{eqnarray}
	\PP\left(\sum_{k=1}^K \ep_k > 2 \max\left\{2\sqrt{V_k}, b \sqrt{\log(1/\delta)}\right\}\sqrt{\log(1/\delta)}   \right) \leq \log(K)\delta. 
	\end{eqnarray}	
	
\end{theorem}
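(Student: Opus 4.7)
The plan is to recognize this as a Freedman/Bernstein-type inequality for martingales with a data-dependent (random) variance, and to prove it by combining the standard exponential-moment bound for bounded martingale differences with a peeling argument over the possible values of the cumulative conditional variance $V_K^2$. The two-regime $\max$ on the right-hand side of the bound and the extra $\log K$ factor on the probability are the two fingerprints of exactly this structure: the $\max$ reflects the transition between the subgaussian and sub-exponential tails of Bernstein, while the $\log K$ comes from a union bound over a dyadic cover of $V_K^2$.

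First I would establish a fixed-variance Bernstein inequality. Using the standard bound $\EE[e^{\lambda\ep_k}\mid\cF^{k-1}] \le \exp\!\left(\frac{\lambda^2\sigma_k^2/2}{1-\lambda b/3}\right)$ valid for $0<\lambda<3/b$, I would form the supermartingale $M_k=\exp\!\left(\lambda S_k - \phi(\lambda)\sum_{j\le k}\sigma_j^2\right)$ with $\phi(\lambda)=\frac{\lambda^2/2}{1-\lambda b/3}$, where $S_k=\sum_{j\le k}\ep_j$, and apply Markov's inequality to $M_K$. Tuning $\lambda$ to $t/(v^2+bt/3)$ on the event $\{V_K^2\le v^2\}$ gives the one-sided tail
\[
\PP\!\left(S_K>t,\ V_K^2\le v^2\right)\le \exp\!\left(-\frac{t^2/2}{v^2+bt/3}\right).
\]

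Next, I would handle the randomness of $V_K^2$ by a peeling argument. Since $|\ep_k|\le b$ almost surely, $V_K^2\in[0,Kb^2]$, so I would dyadically decompose the range of $V_K^2$ into slabs $v_i^2\in[2^{i-1}b^2,2^i b^2]$ for $i=0,1,\dots,\lceil\log_2 K\rceil$, plus the initial slab $\{V_K^2\le b^2\}$. On the $i$-th slab I apply the fixed-variance bound with $v^2=2^i b^2$ and choose $t=t_i:=2\max\{2v_i,b\sqrt{\log(1/\delta)}\}\sqrt{\log(1/\delta)}$; a short computation then shows that each slab contributes a probability $\le\delta$, both in the subgaussian regime ($t_i\asymp v_i\sqrt{\log(1/\delta)}$, where the $v^2$ term in the denominator dominates) and in the sub-exponential regime ($t_i\asymp b\log(1/\delta)$, where the $bt/3$ term dominates). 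Finally, if $V_K^2$ lies in slab $i^\star$, then $t_{i^\star}\le 2\max\{2V_K,b\sqrt{\log(1/\delta)}\}\sqrt{\log(1/\delta)}$ by monotonicity of the max; a union bound over the at most $\lceil\log_2 K\rceil+1\le \log K$ slabs (using $K\ge 4$) yields the claimed failure probability $\le\log(K)\,\delta$.

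The main obstacle is getting the constants clean in both regimes of the $\max$ simultaneously; this is a routine but fiddly verification that for each $i$ the exponent $-t_i^2/(2v_i^2+2bt_i/3)$ is at most $-\log(1/\delta)$, which forces the particular choice of the factors of $2$ in the definition of $t_i$. The restriction $\delta<1/e$ ensures $\log(1/\delta)\ge 1$, which is needed so that the sub-exponential term $b\sqrt{\log(1/\delta)}\cdot\sqrt{\log(1/\delta)}=b\log(1/\delta)$ dominates when it should. Once the per-slab bounds are tight, the peeling/union-bound step is purely combinatorial and produces the stated inequality.
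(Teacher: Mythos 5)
The paper does not actually prove this statement: it is imported verbatim as Lemma 3 of \citet{rakhlin2012making}, with the remark that it extends Freedman's inequality by allowing conditional variances. Your strategy — a Freedman-type exponential supermartingale bound $\PP(S_K>t,\ V_K^2\le v^2)\le\exp\bigl(-t^2/(2v^2+2bt/3)\bigr)$ combined with peeling over the random variance and a union bound — is exactly the standard route taken in the cited source, and it is sound. Two bookkeeping points need fixing, though. First, your final step is stated backwards: on the slab $V_K^2\in[v_{i-1}^2,v_i^2]$ you must apply Freedman with the \emph{upper} endpoint $v_i^2$ as the variance bound but with the threshold evaluated at the \emph{lower} endpoint (since $V_K\ge v_i/\sqrt2$ implies the data-dependent threshold $2\max\{2V_K,b\sqrt{\log(1/\delta)}\}\sqrt{\log(1/\delta)}$ is at least $2\max\{\sqrt2\,v_i,b\sqrt{\log(1/\delta)}\}\sqrt{\log(1/\delta)}$); writing $t_{i^\star}\le\tau(V_K)$ "by monotonicity" with $t_{i^\star}$ defined from the upper endpoint gives an inequality in the wrong direction. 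With the corrected pairing the per-slab exponent is still $\le-\log(1/\delta)$ in both regimes, so nothing breaks. Second, a base-$2$ peeling over the full range $[0,Kb^2]$ produces $\lceil\log_2 K\rceil+1$ slabs, which exceeds $\ln K$; to land on the stated $\log(K)\delta$ one should handle the low-variance regime $V_K^2\le b^2\log(1/\delta)$ in a single application of Freedman (there the $b$-term of the max dominates) and peel only the remaining range $[b^2\log(1/\delta),Kb^2]$, which — using $\delta<1/e$ so that $\log(1/\delta)\ge1$ — keeps the number of events within the claimed $\log K$. Neither issue is conceptual; both are the "fiddly constants" you anticipated.
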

Theorem \ref{proh2} extends the standard Freedman's Inequality \citep{Freedman1975On} by allowing  $\sigma_k$ being the conditional
variance.  Similar results can be found  in \cite{bartlett2008high-probability} and  Lemma $2$ in \citet{zhang2005learning}  and the references therein.

Note that Theorem \ref{proh} and \ref{proh2}  only list the results for the bounded   martingale difference. Similar results can also be established when the martingale difference follows from a  sub-gaussian distribution. In the rest of our proofs, we also only present the results for the bounded  noise case, i.e. \eqref{bounded} in Assumption \ref{assu:noise}. Analogous analysis can be applied for sub-gaussian noise, i.e. \eqref{bound2} in Assumption \ref{assu:noise}.

\section{Deferred Proofs of Part \mbox{I}: Escaping Saddles}\label{proofone}
Let the deterministic time 
\beq\label{Todef}
\Ko
 = 2 \log\left( \frac{24\sqrt{d}}{\eta}\right)\eta^{-1}\delta_2^{-1} \geq
\left\lceil
\frac{\log (6 / q_0) }{\log (1 + \eta (\delta_2) )}
\right\rceil\overset{B\leq1}{\geq} \left\lceil
\frac{\log (6\radius / q_0) }{\log (1 + \eta (\delta_2) )}
\right\rceil
,
\eeq
where  $q_0 = \frac{\sigma\eta}{4\sqrt{d}}$.
We  prove Proposition \ref{prop:first} that bound the iteration number  to escape $\cB(\x^0, \radius)$.

\begin{proof}[Proof of Proposition \ref{prop:first}]
\begin{enumerate}[(i)]
\item
We prove in this item that $\cS_{\Ko}^{\radius}(\x^0)$ satisfies the $(q_0,\e_1)$-narrow property, i.e.~there cannot be two points $\u,\u+q\e_1 \in \cS_{\Ko}^{\radius}(\x^0)$ such that $q \ge q_0$.
Indeed if such two points do exist, from \eqref{bad_initS} we have
$$
 \PP( \cK_{exit}(\u) \ge \Ko) \ge 0.6
\quad\text{and}\quad
 \PP( \cK_{exit}(\u+q\e_1) \ge \Ko) \ge 0.6
,
$$
and hence by inclusion-exclusion principle
\begin{align*}
&\quad
\PP( \cK_{exit}(\u) \ge \Ko \text{ and } \cK_{exit}(\u+q\e_1) \ge \Ko)
 \\&\ge
\PP( \cK_{exit}(\u) \ge \Ko) + \PP( \cK_{exit}(\u+q\e_1) \ge \Ko)  -  1
 \ge
2(0.6) - 1 = 0.2
,
\end{align*}
which contradicts \eqref{kep_rep} in Lemma \ref{lemm:key}.

\item
Combining the fact that $\cS_{\Ko}^{\radius}(\x^0)$ satisfies the $(q_0,\e_1)$-narrow property (as in Definition \ref{defi:qnarrow}) where $q_0 = \eta \sigma / 4\sqrt{d}$, and Assumption \ref{assu:noise} which allows $\nabla F(\u; \bzeta^1)$ to satisfy the $\e_1$-disperse property, we have for any $\u\in \RR^d$ the following holds:
\beq\label{smallprob}
\begin{split}
\PP\left( \w^1(\u)  \in \cS_{\Ko}^{\radius}(\x^0)\right)
&=
\PP\left( \u - \eta \nabla F( \u; \bzeta^1) \in \cS_{\Ko}^{\radius}(\x^0)\right)
 \\&=
\PP\left( \nabla F(\u; \bzeta^1) \in \eta^{-1} [ - \cS_{\Ko}^{\radius}(\x^0) + \u ] \right)
 \le \frac14
,
\end{split}
\eeq
where we applied \eqref{wku} and that $\w^0(\u) = \u$.
Thus
\begin{align*}
\PP\left( \cK_{exit}(\u) \le \Ko  \right) 
 &\ge
\EE\left( \PP\left( \cK_{exit}(\w^1(\u)) < \Ko  \mid  \cF^1\right);
 \w^1(\u) \in \cS_{\Ko}^{\radius}(\x^0)  \right)
 \\&\quad\quad
 +
\EE\left( \PP\left( \cK_{exit}(\w^1(\u)) < \Ko  \mid  \cF^1\right);
 \w^1(\u) \in  [\cS_{\Ko}^{\radius}(\x^0)]^c \right)
 \\&\ge
\EE\left( \PP\left( \cK_{exit}(\w^1(\u)) < \Ko  \mid  \cF^1\right); 
\w^1(\u) \in [\cS_{\Ko}^{\radius}(\x^0)]^c \right)
 \\&\ge
  0.4 \PP\left( \w^1(\u) \in [\cS_{\Ko}^{\radius}(\x^0)]^c \right)
 \ge
 0.4 \left( \frac34 \right)
 =
  0.3
 ,
\end{align*}
i.e.~$\sup_{\u'\in\RR^d} \PP\left( \cK_{exit}(\u') > \Ko  \right) \le 0.7$.
Using \eqref{smallprob} and Markov's property we conclude for any $N \ge 1$
\begin{align*}
\PP\left( \cK_{exit}(\u) > N \Ko  \right) 
 &=
\EE\left( \PP(\cK_{exit}(\w^{(N-1)\Ko}(\u)) > \Ko \mid \cF^{K_0});  \cK_{exit}(\u) > (N-1) \Ko  \right)
 \\&\le
\sup_{\u'\in \cB(\u, \radius)} \PP\left( \cK_{exit}(\u') > \Ko  \right)
  \cdot
\PP( \cK_{exit}(\u) > (N-1) \Ko  )
 \\&\le
0.7\cdot \PP( \cK_{exit}(\u) > (N-1) \Ko  )
,
\end{align*}
which further leads to
$
\PP\left( \cK_{exit}(\u) > N \Ko  \right) 
 \le
0.7^N
$.
Letting $N = \lfloor \log(3\cdot p^{-1}) / \log (0.7^{-1}) \rfloor + 1$ we obtain an exit probability of $\le p/3$ which completes the proof of Proposition \ref{prop:first}.

\end{enumerate}
\end{proof}

\subsection{Proof of Lemma \ref{lemm:key}}\label{sec:proof,lemm:key}
This subsection denotes to the proof of Lemma \ref{lemm:key} in the following steps:

\begin{enumerate}[(i)]
\item
Denote for simplicity $\w^k \equiv \w^k(\u)$, and $\bar{\w}^k \equiv \w^k(\u+q\e_1)$.
Recall from the SGD update rule we have $\w^0 = \u$, and for all $k=1,2,\dots$ for a random index $\bzeta^{k}$ drawn from distribution $\cD$,
$$
\w^k = \w^{k-1} - \eta \nabla F(\w^{k-1}; \bzeta^{k})
,
$$
and
$$
\bar{\w}^k = \bar{\w}^{k-1} - \eta \nabla F(\bar{\w}^{k-1}; \bzeta^{k})
.
$$
Recall the definition of $\cK_{exit}(\u)$ in \eqref{cKexit}, we let
\beq\label{cT1}
\cK_1 :=  
\cK_{exit}(\u) \land \cK_{exit}(\u + q\e_1)
.
\eeq
For our analysis, we define a coupled $\cF^k$-measurable iteration $\z^k$, as follows:
\beq\label{yk}
\z^k
=
\left\{
\begin{array}{ll}
 \bar{\w}^k - \w^k 						&   \text{on $(k < \cK_1)$}
\\
 \left( \Ib - \eta \nabla^2 f(\x^0)  \right)  \z^{k-1}  	&   \text{on $(k\ge \cK_1)$}
\end{array}
\right.
,
\eeq
i.e. we couple the difference iteration $\bar{\w}^k - \w^k$ on $(k< \cK_1)$, and keep moving the iteration afterwards as if it is the difference iteration of SGD for pure quadratics (we eliminate both the Taylor remainder term and noise term after exiting).
Since $\w^0 = \u$, $\bar{\w}^0 = \u + q \e_1$ and $(\cK_1 > 0)$ holds, we have $\y^0 = q \e_1$.
We \textit{only} want to show for any $\u$ such that $\u,\u+q\e_1 \in \cB(\tilde{\x},\radius)$,
\beq\label{cT1bdd}
\PP( \cK_1 > \Ko )
=
\PP\left(  \cK_{exit}(\u) \land \cK_{exit}(\u + q\e_1)  > \Ko \right)
\le p
.
\eeq

\item
Letting $\Hb = \nabla^2 f(\x^0)$, and we first conclude the following lemma to express $\z^k$ defined in \eqref{yk}:

\begin{lemma}\label{lemm:ytexpress}
We have for all $k=1,2,\dots$
\beq\label{ytexpress}
\z^k
=
\left( \Ib - \eta \Hb  \right)  \z^{k-1}
+
\eta \Db^{k-1} \z^{k-1}
+ 
\eta \bxi_d^{k}
,
\eeq
where
\beq\label{eqD}
\| \Db^{k-1} \|
 \le 
\rho \max\left( \|\bar\w^{k-1} - \x^0\|, \|\w^{k-1} - \x^0\| \right)
 \le 
\rho \radius
,
\eeq
$\{\bxi_d^{k}\}$  forms a martingale difference sequence  satisfying
\beq\label{eqep}
\|\bxi_d^{k} \|
 \le 
2L \|\z^{k-1}\|
.
\eeq
\end{lemma}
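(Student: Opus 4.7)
\textbf{Proof proposal for Lemma \ref{lemm:ytexpress}.}
The plan is to split the argument according to whether $k$ lies before or after the stopping time $\cK_1$, matching the case definition of $\z^k$ in \eqref{yk}. The easy case is $k \ge \cK_1$: the recursion in \eqref{yk} directly gives $\z^k = (\Ib - \eta\Hb)\z^{k-1}$, so setting $\Db^{k-1} = 0$ and $\bxi_d^k = 0$ makes both the identity \eqref{ytexpress} and the bounds \eqref{eqD}, \eqref{eqep} trivially true, and the martingale-difference property holds vacuously.

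The substantive case is $k < \cK_1$, where by construction $\z^k = \bar\w^k - \w^k$ and both $\bar\w^{k-1}$ and $\w^{k-1}$ lie in $\cB(\x^0,\radius)$. Subtracting the two SGD updates (which share the same sample $\bzeta^k$) yields
\[
\z^k = \z^{k-1} - \eta\bigl[\nabla F(\bar\w^{k-1};\bzeta^k) - \nabla F(\w^{k-1};\bzeta^k)\bigr].
\]
The natural next step is to add and subtract $\eta[\nabla f(\bar\w^{k-1}) - \nabla f(\w^{k-1})]$, splitting the bracket into a conditional-mean piece and a mean-zero noise piece. For the conditional-mean piece I will use the fundamental theorem of calculus to write
\[
\nabla f(\bar\w^{k-1}) - \nabla f(\w^{k-1}) = \bar\Hb^{k-1}\z^{k-1}, \qquad \bar\Hb^{k-1} := \int_0^1 \nabla^2 f\bigl(\w^{k-1} + s\,\z^{k-1}\bigr)\,\mathrm{d}s,
\]
and set $\Db^{k-1} := \Hb - \bar\Hb^{k-1}$ (absorbing the sign so \eqref{ytexpress} holds as stated). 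The Hessian-Lipschitz hypothesis \eqref{HessSmooth} applied pointwise under the integral, together with the fact that $\w^{k-1} + s\z^{k-1}$ is a convex combination of $\w^{k-1}$ and $\bar\w^{k-1}$, immediately gives $\|\Db^{k-1}\| \le \rho\max(\|\bar\w^{k-1} - \x^0\|, \|\w^{k-1}-\x^0\|)$, and the second inequality in \eqref{eqD} follows because $k-1 < \cK_1$ forces both iterates into $\cB(\x^0,\radius)$.

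For the noise piece I will define $\bxi_d^k := -[\bxi_k(\bar\w^{k-1}) - \bxi_k(\w^{k-1})]$ where $\bxi_k(\x) := \nabla F(\x;\bzeta^k) - \nabla f(\x)$. Since $\bar\w^{k-1}$ and $\w^{k-1}$ are $\cF^{k-1}$-measurable and $\bzeta^k$ is independent of $\cF^{k-1}$, the unbiasedness $\EE[\bxi_k(\x)\mid \cF^{k-1}] = 0$ for each fixed $\x$ upgrades by a standard conditioning argument to $\EE[\bxi_d^k\mid\cF^{k-1}] = 0$, establishing the martingale-difference property. The pathwise bound \eqref{eqep} is then a two-term triangle inequality: the per-sample gradient-Lipschitz bound \eqref{GradSmooth} gives $\|\nabla F(\bar\w^{k-1};\bzeta^k) - \nabla F(\w^{k-1};\bzeta^k)\| \le L\|\z^{k-1}\|$, and taking the conditional expectation inherits the same $L$-Lipschitz constant for $\nabla f$, so the difference is at most $2L\|\z^{k-1}\|$.

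I do not anticipate a genuine obstacle here; the statement is essentially a clean bookkeeping of a Taylor expansion with remainder together with a variance-vs.-bias decomposition of the stochastic gradient. The only subtle point worth flagging in the write-up is that the Hessian-Lipschitz inequality \eqref{HessSmooth} is stated for $\nabla^2 f$ (not for $\nabla^2 F(\cdot;\bzeta)$), which is exactly why the integral-form remainder must be applied to $\nabla f$ after the bias/noise split, rather than to $\nabla F$ directly.
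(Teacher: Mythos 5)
Your proposal is correct and follows essentially the same route as the paper's proof: the same trivial-case handling for $k \ge \cK_1$, the same split of the SGD difference into a mean (fundamental-theorem-of-calculus) piece and a mean-zero noise piece, the same definitions $\Db^{k-1} = \Hb - \int_0^1 \nabla^2 f(\w^{k-1} + \theta\z^{k-1})\,d\theta$ and $\bxi_d^k = (\nabla f(\bar\w^{k-1}) - \nabla f(\w^{k-1})) - (\nabla F(\bar\w^{k-1};\bzeta^k) - \nabla F(\w^{k-1};\bzeta^k))$, and the same Hessian-Lipschitz and triangle-inequality bounds. Your closing remark—that the Hessian-Lipschitz hypothesis applies only to $\nabla^2 f$, forcing the integral remainder onto $\nabla f$ rather than $\nabla F$—is exactly the point that makes this decomposition the natural one.
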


\begin{proof}[Proof of Lemma \ref{lemm:ytexpress}]
By setting $\Db^{k-1} = \mathbf{0}^{d\times d}$ and $\bxi_d^{k} = \mathbf{0}$, on event $(k \ge \cK_1)$ we can easily see from \eqref{yk} that all \eqref{ytexpress}, \eqref{eqD} and \eqref{eqep} hold, since their left hands are zero.
For its complement $(k < \cK_1)$, we have
\begin{align*}
\z^k = \bar{\w}^{k} - \w^{k}
  &=
\bar{\w}^{k-1} - \w^{k-1} - \eta\left(
  \nabla F(\bar{\w}^{k-1}; \bzeta^{k}) - \nabla F(\w^{k-1}; \bzeta^{k})
\right)
  \\ &=
\z^{k-1} - \eta\left(
	\nabla f(\bar{\w}^{k-1})
	-
	\nabla f(\w^{k-1})
	\right)
  \\&\quad+
	\eta\left[
	\left(\nabla f(\bar{\w}^{k-1}) - \nabla f(\w^{k-1}) \right)
	 - 
	\left(\nabla  F(\bar{\w}^{k-1}; \bzeta^{k}) - \nabla F(\w^{k-1}; \bzeta^{k}) \right)
	\right]
 \\ &= 
\z^{k-1} - \eta
\left[
\int_0^1 \nabla^2 f\left(\w^{k-1} + \theta(\bar{\w}^{k-1} - \w^{k-1}) \right)  \ud \theta
\right]
\z^{k-1}
+
\eta \bxi_d^{k}
 \\ &\equiv
\z^{k-1} - \eta\left( \Hb - \Db^{k-1} \right) \z^{k-1}
+ \eta \bxi_d^{k}
,
\end{align*}
where we set the following terms \eqref{Dbterm} and \eqref{xiterm}:
\beq\label{Dbterm}
\Db^{k-1}
 \equiv 
\nabla^2 f(\x^0)
-\int_0^1 \nabla^2 f\left(\w^{k-1} + \theta(\bar{\w}^{k-1} - \w^{k-1}) \right) d\theta
,
\eeq
and the noise term $\bxi_d^{k}$ generated at each iteration
\beq\label{xiterm}
\bxi_d^{k}
 \equiv 
	\left(\nabla f(\bar{\w}^{k-1}) - \nabla f(\w^{k-1}) \right)
	 - 
	\left(\nabla  F(\bar{\w}^{k-1}; \bzeta^{k}) - \nabla F(\w^{k-1}; \bzeta^{k}) \right)
,
\eeq
proving \eqref{ytexpress}.

It leaves us to prove \eqref{eqD} and \eqref{eqep}.
From \eqref{Dbterm}, we have
\begin{align*}
\| \Db^{k-1} \|
 &\le 
\int_0^1 
\left\|  
\nabla^2 f(\x^0)
-
\nabla^2 f\left(\w^{k-1} + \theta(\bar{\w}^{k-1} - \w^{k-1}) \right)
\right\|
 d\theta
 \\&\le 
\rho \int_0^1 
\left\|  
\theta (\bar{\w}^{k-1}-\x^0) + (1-\theta) (\w^{k-1}-\x^0)
\right\|
 d\theta
 \\&\le 
\rho\max\left( \|\bar{\w}^{k-1} - \x^0\|, \|\w^{k-1} - \x^0\| \right)
,
\end{align*}
which is bounded by $\rho\radius$ since $\max\left( \|\bar{\w}^{k-1} - \x^0\|, \|\w^{k-1} - \x^0\| \right) \le \radius$, proving \eqref{eqD}.

The $\bxi_d^k$ defined in \eqref{xiterm} has $\EE[\bxi_d^k \mid \cF^{k-1}] = 0$ forming a Martingale Difference Sequence, and from Lipschitz continuity of the objective function, we have
\begin{align*}
\|\bxi_d^{k} \|
 &\le 
\left\|
	\nabla f(\bar{\w}^{k-1}) - \nabla f(\w^{k-1}) 
\right\|
	 +
\left\|
	\nabla  F(\bar{\w}^{k-1}; \bzeta^{k}) - \nabla F(\w^{k-1}; \bzeta^{k}) 
\right\|
 \\ &\le 
L \left\| \bar{\w}^{k-1} - \w^{k-1} \right\|
+
L \left\| \bar{\w}^{k-1} - \w^{k-1} \right\|
 =
2L \|\z^{k-1}\|
.
\end{align*}
This completes the proof of \eqref{eqep}, and hence the lemma.

\end{proof}

\item
We observe from \eqref{ytexpress} that if $\nabla^2 f(\z)$ does \textit{not} rotate in the sense that each pair of Hessian matrices $\nabla^2 f(\w_1)$ and $\nabla^2 f(\w_2)$ can be spectrally decomposed via the same orthogonal matrix, one can analyze the iteration coordinate-wisely.
Here, the rotation effect of Hessian matrix \textit{cannot} be ignored.
Hence, we analyze the difference iteration $\z^k$ in two aspects:
(i) $\z^k$ has a \textit{rotation effect} after standardization, and 
(ii) its norm $\|\z^k\|$ has an \textit{expansion effect}.

To decouple these two effect, we define a rescaled iteration as follows.
Let $\delta_m$ denote the negated least eigenvalue $\lambda_{\min}(\nabla^2 f(\x^0))$ of Hessian so $\delta_m \geq \epH$.
Let for each $k=0,1,\dots$
\beq\label{eqpsi}
\bpsi^k \equiv q^{-1} (1 + \eta \delta_m)^{-k} \z^k
.
\eeq
We  state the following lemma for the update rule of $\bpsi^k$.

\begin{lemma}\label{lemm:psitexpress}
Let
$
\hat\Db^k \equiv (1 + \eta\delta_m)^{-1} \Db^k
,
$
and
$
\bzeta_d^k \equiv q^{-1} (1 + \eta \delta_m)^{-k} \bxi_d^{k}
.
$
We have $\bpsi^0 = \e_1$ and
\beq\label{psitexpress}
\bpsi^k
=
\frac{(\Ib - \eta \Hb)}{1 + \eta \delta_m} \bpsi^{k-1}
+
\eta  \hat\Db^{k-1} \bpsi^{k-1}
+ 
\eta  \bzeta_d^{k}
,
\eeq
where
\beq\label{eqhatD}
\| \hat{\Db}^{k-1} \|
 \le 
\rho \radius
,
\eeq
and the rescaled noise iteration $\bzeta_d^k$ has
\beq\label{hatepk}
\|\bzeta_d^k \|
 \le
2L \| \bpsi^{k-1}  \|
, \quad k \geq 1. 
\eeq
Then with the step size set in \eqref{parametersetting}\footnote{We actually only  need $\eta \leq \tO(\ep^{0.5})$ to obtain  Lemma \ref{lemm:psitexpress}. },  we have on the event $\cH_o$ (\eqref{probability-3} happens),  the norm of $\bpsi^k$ satisfies
\beq\label{psinorm_express}
\| \bpsi^k \|^2
\le
4
,
\eeq
and for the projection of $\bpsi^k$ onto the first coordinate,
\beq\label{psi1_express}
\e_1^\top \bpsi^k
>
\frac12
.
\eeq
\end{lemma}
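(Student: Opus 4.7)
The plan is to split the lemma into two parts: (a) the algebraic identities \eqref{psitexpress}--\eqref{hatepk}, which are essentially bookkeeping, and (b) the deterministic control \eqref{psinorm_express}--\eqref{psi1_express}, which requires an inductive argument coupled with a martingale concentration bound.

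For part (a), I would simply divide both sides of \eqref{ytexpress} by $q(1+\eta\delta_m)^{k}$. The left-hand side becomes $\bpsi^k$ by definition. The first term on the right becomes $\frac{\Ib-\eta\Hb}{1+\eta\delta_m}\bpsi^{k-1}$, and the remaining two terms become $\eta\hat\Db^{k-1}\bpsi^{k-1}$ and $\eta\bzeta_d^k$ respectively. The bound \eqref{eqhatD} follows from \eqref{eqD} together with $(1+\eta\delta_m)^{-1}\le 1$; the bound \eqref{hatepk} follows from \eqref{eqep} since $\|\bzeta_d^k\|=q^{-1}(1+\eta\delta_m)^{-k}\|\bxi_d^k\|\le 2L(1+\eta\delta_m)^{-1}\|\bpsi^{k-1}\|\le 2L\|\bpsi^{k-1}\|$. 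The martingale property of $\{\bzeta_d^k\}$ is inherited from $\{\bxi_d^k\}$ since the rescaling factor is $\cF^{k-1}$-measurable.

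For part (b), let $M:=(\Ib-\eta\Hb)/(1+\eta\delta_m)$. Under the step-size restriction $\eta L\le 1$ in \eqref{parametersetting}, every eigenvalue $\mu$ of $M$ satisfies $\mu=(1-\eta\lambda)/(1+\eta\delta_m)$ for some $\lambda\in[-\delta_m,L]$, so $\mu\in[-1,1]$ and hence $\|M\|\le 1$. Moreover, since $\e_1$ is an eigenvector of $\Hb$ with eigenvalue $-\delta_m$, one has $M^\top\e_1=\e_1$. Unrolling \eqref{psitexpress} yields
\beq\label{unroll}
\bpsi^k = M^{k}\bpsi^0 + \eta\sum_{j=0}^{k-1} M^{k-1-j}\bigl(\hat\Db^{j}\bpsi^{j}+\bzeta_d^{j+1}\bigr),
\eeq
with $\bpsi^0=\e_1$. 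Define the stopping time $\tau:=\inf\{k\ge 0:\|\bpsi^k\|>2\}$ and the event
\[
\cH_o := \Bigl\{\max_{1\le k\le K_o}\Bigl\|\eta\sum_{j=0}^{k-1}M^{k-1-j}\bzeta_d^{j+1}\1_{j<\tau}\Bigr\|\le \tfrac14\Bigr\}.
\]
On $\cH_o$ I would induct on $k$ to show simultaneously that $\|\bpsi^k\|\le 2$ and $\e_1^\top\bpsi^k>\tfrac12$. The norm bound follows by taking norms in \eqref{unroll}: the first term is bounded by $1$, the deterministic perturbation is at most $\eta\sum_{j<k}\rho B\cdot 2\le 2\eta K_o\rho B$, and the stochastic perturbation is $\le 1/4$ on $\cH_o$. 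The parameter choices $K_o\asymp\log(24\sqrt d/\eta)\eta^{-1}\delta_2^{-1}$, $B=\delta/(\rho\tC_1)$, $\delta_2=16\delta$, and $\tC_1\asymp\log(24\sqrt d/\eta)$ make $2\eta K_o\rho B$ a small constant, closing the induction with $\|\bpsi^k\|\le 2$. For the first-coordinate bound, apply $\e_1^\top$ to \eqref{unroll} and use $\e_1^\top M^{k-1-j}=\e_1^\top$; this gives $\e_1^\top\bpsi^k = 1+\eta\sum_{j<k}\e_1^\top(\hat\Db^{j}\bpsi^{j}+\bzeta_d^{j+1})$, whose deviation from $1$ is bounded by the same two small quantities, yielding $\e_1^\top\bpsi^k>1/2$.

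The main obstacle is verifying $\PP(\cH_o)$ is large (so that \eqref{probability-3} of the paper materializes as an explicit high-probability event). Note the sequence $\eta M^{k-1-j}\bzeta_d^{j+1}\1_{j<\tau}$ is a vector-valued martingale difference with conditional bound $\eta\cdot 2L\|\bpsi^j\|\1_{j<\tau}\le 4\eta L$, using $\|M\|\le 1$ and the stopping-time truncation. Theorem \ref{proh} then gives $\PP(\cH_o^c)$ exponentially small in $1/(\eta L)^2 K_o$, which under \eqref{parametersetting} is at most the target probability. A subtle point is that the truncation at $\tau$ is what makes the martingale differences bounded without circularly assuming $\|\bpsi^j\|\le 2$; once $\cH_o$ occurs, the induction shows $\tau>K_o$ deterministically, so the truncation is vacuous and the claimed bounds \eqref{psinorm_express}, \eqref{psi1_express} hold for all $k\le K_o$.
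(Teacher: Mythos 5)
Your proof is correct, but the route you take for part (b) genuinely differs from the paper's. The paper expands $\|\bpsi^k\|^2$ recursively as $\|\hat\bpsi^{k-1}+\eta\hat\Db^{k-1}\bpsi^{k-1}+\eta\bzeta_d^k\|^2$, identifies three remainder terms $Q_{1,k},Q_{2,k},Q_{3,k}$, telescopes the squared norms, and controls two \emph{scalar} martingales $\sum_k \hat\bpsi^{k-1\top}\bzeta_d^k\cdot\cI_{\|\bpsi^{k-1}\|\le 2}$ and $\sum_k \e_1^\top\bzeta_d^k\cdot\cI_{\|\bpsi^{k-1}\|\le 2}$ via Azuma; this is what produces the event $\cH_o$ in \eqref{probability-3}. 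You instead unroll the linear recursion into \eqref{unroll}, exploit the two spectral facts $\|M\|\le 1$ and $M\e_1=\e_1$, and run a single \emph{vector}-valued martingale concentration (Theorem \ref{proh}) on the stopped sum. Your approach buys a unification: the one vector bound $\|\eta\sum_j M^{k-1-j}\bzeta_d^{j+1}\1_{j<\tau}\|\le\tfrac14$ controls both the norm (by triangle inequality) and the first coordinate (by Cauchy--Schwarz after applying $\e_1^\top$, since $\e_1^\top M^{k-1-j}=\e_1^\top$), whereas the paper needs two separate scalar martingale bounds. Your stopping-time truncation $\1_{j<\tau}$ plays exactly the role of the paper's indicator $\cI_{\|\bpsi^{k-1}\|\le 2}$: it keeps the martingale increments bounded without circularity, and once $\cH_o$ holds the induction (or the contradiction at $k=\tau$) shows the truncation is vacuous. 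The arithmetic also closes: with the parameters in \eqref{parametersetting}, one has $2\eta K_0\rho B=\tfrac18$ exactly, so $\|\bpsi^k\|\le 1+\tfrac18+\tfrac14<2$ and $\e_1^\top\bpsi^k\ge 1-\tfrac18-\tfrac14>\tfrac12$. One small bookkeeping remark: your $\cH_o$ is indexed over $1\le k\le K_o$, whereas the paper's \eqref{probability-3} ranges over $1\le l\le K_0$; since the parameter constraints make the deterministic perturbation and the martingale bound small even over the larger range, you should index up to $K_0$ so the lemma's conclusion covers every $k$ the rest of the argument needs.
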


\begin{proof}[Proof of Lemma \ref{lemm:psitexpress}]
We have from the definition of $\bzeta_d^k$
\begin{align*}
\|\bzeta_d^k \|
 &\le 
q^{-1} (1 + \eta \delta_m)^{-k} \|\bxi_d^{k} \|
 \\&\le
2L q^{-1} \frac{(1 + \eta \delta_m)^{-(k-1)}}{1+\eta \delta_m}  \|\z^{k-1}\|
 \le
2L \| \bpsi^{k-1}  \|
,
\end{align*}
establishing \eqref{hatepk}, and hence
\begin{align*}
\bpsi^k
&=
q^{-1} (1 + \eta \delta_m)^{-k} \z^k
\\&=
\frac{(\Ib - \eta \Hb)}{1 + \eta \delta_m} q^{-1} ( 1 + \eta \delta_m )^{-(k-1)}  \z^{k-1}
\\&\hspace{1in}+
\eta \frac{\Db^{k-1}}{1 + \eta \delta_m} q^{-1}  (1 + \eta \delta_m)^{-(k-1)} \z^{k-1}
+ 
\eta q^{-1} (1 + \eta \delta_m)^{-k} \bxi_d^k
\\&=
\frac{(\Ib - \eta \Hb)}{1 + \eta \delta_m} \bpsi^{k-1}
+
\eta  \hat\Db^{k-1} \bpsi^{k-1}
+ 
\eta  \bzeta_d^k
,
\end{align*}
proving \eqref{psitexpress} and \eqref{eqhatD}.

To handle the term involving the $\bzeta_d^{k}$ terms on the right hands of \eqref{psi1_express} and \eqref{psinorm_express}, we first set
\beq\label{hatpsi}
\hat\bpsi^{k-1}
 =
\frac{\left[ \Ib - \eta \Hb  \right]}{1 + \eta \delta_m} \bpsi^{k-1}
.
\eeq
Since $\eta L \le 1$ we simply have $ \left[ \Ib - \eta \Hb  \right]$ is symmetric and has all eigenvalues in $[0,1+\eta \delta_m]$, so $\| \Ib - \eta \Hb  \| \le 1 + \eta \delta_m$.
This implies $\| \hat\bpsi^{k-1} \| \le \| \bpsi^{k-1} \|$. 

On the other hand,  for all $k\geq 1$, we have
\begin{eqnarray}
\EE \left[ \hat\bpsi^{k-1}\,^\top\bzeta^k_d\cdot \cI_{\|\bpsi^{k-1}\|\leq 2 } \mid \cF^{k-1}\right] \overset{a}{=} \cI_{\|\bpsi^{k-1}\|\leq 2 } \cdot\EE [\hat\bpsi^{k-1}\,^\top \bzeta^k_d \mid \cF^{k-1}] =0,
\end{eqnarray}
and
\begin{eqnarray}
\EE \left[ \left|\hat\bpsi^{k-1}\,^\top \bzeta^k_d \cdot \cI_{\|\bpsi^{k-1}\|\leq 2 }\right|^2 \mid \cF^{k-1}\right] \overset{a~\&~\eqref{hatepk}}{=} \cI_{\|\bpsi^{k-1}\|\leq 2 } \cdot 2L\|\bpsi^{k-1} \|^2 \leq 8L,
\end{eqnarray} 
where $\cI$ denotes the indicator function, $\overset{a}=$ uses $\bpsi^{k-1}$ and $\hat\bpsi^{k-1}$ are measurable on $\cF^{k-1}$.   By the  standard Azuma's inequality,  with probability $1 - 0.1/(2K_0)$, for any $l$ from $1$ to $K_0$, 
\begin{eqnarray}\label{probability-2}
\left|\sum_{k=1}^l   \hat\bpsi^{k-1}\,^\top\bzeta^k_d \cdot \cI_{\|\bpsi^{k-1}\|\leq 2}
\right|\leq 4\sqrt{L l  \log(40K_0)} \leq 4\sqrt{L K_0  \log(40K_0)}\overset{\eqref{parametersetting}}{\leq} \frac{1}{\eta}.
\end{eqnarray}

Analogously, we also have
\begin{eqnarray}
\EE \left[ \e_1^\top \bzeta_d^k  \cdot \cI_{\|\bpsi^{k-1}\|\leq 2 }  \mid \cF^{k-1}\right]=\mathbf{0}, \quad  \EE \left[ \left|\e_1^\top \bzeta_d^k  \cdot \cI_{\|\bpsi^{k-1}\|\leq 2 }\right|^2 \mid \cF^{k-1}\right]\leq 4L.
\end{eqnarray}
Thus with standard Azuma's inequality,
\begin{eqnarray}\label{probability-1}
\left|\sum_{k=1}^l \e_1^\top \bzeta_d^k  \cdot \cI_{\|\bpsi^{k-1}\|\leq 2 } 
\right|\leq \sqrt{ 8 L l  \log(40k_0)} \leq \sqrt{8L K_0  \log(12K_0/p)}\overset{\eqref{parametersetting}}{\leq} \frac{1}{4\eta}
\end{eqnarray}
happens with probability at least $1-0.1/(2K_0)$.

So by union bound, there exists a high-probability event $\cH_o$  happening with probability at least $0.9$ such that the following  inequalities hold for each $l=1,2,\dots, K_0$,

\begin{eqnarray}\label{probability-3}
\left|\sum_{k=1}^l   \hat\bpsi^{k-1}\,^\top\bzeta^k_d \cdot \cI_{\|\bpsi^{k-1}\|\leq 2}
\right|\leq  \frac{1}{\eta}, \quad  \left|\sum_{k=1}^l \e_1^\top \bzeta_d^k  \cdot \cI_{\|\bpsi^{k-1}\|\leq 2 } 
\right| \leq  \frac{1}{4\eta}.
\end{eqnarray}

On the other hand,  we have from \eqref{psitexpress} and \eqref{hatpsi} that for all $k\geq 1$,
\begin{align*}
\| \bpsi^k \|^2
 &= 
\left\|
\frac{\left[ \Ib - \eta \Hb  \right]}{1 + \eta \delta_m} \bpsi^{k-1}
+
\eta \hat\Db^{k-1} \bpsi^{k-1}
+ 
\eta \bzeta_d^{k} 
\right\|^2
 \\&=
\|\hat\bpsi^{k-1}\|^2
+
2\eta \hat\bpsi^{k-1}\,^\top
\hat\Db^{k-1} \bpsi^{k-1}
+
\eta^2 \left\|
\hat\Db^{k-1} \bpsi^{k-1}
+ 
\bzeta_d^{k} 
\right\|^2
+ 
2\eta \hat\bpsi^{k-1}\,^\top
\bzeta_d^{k} 
 \\&\le
\|\bpsi^{k-1}\|^2
+
Q_{1,k} + 
Q_{2,k} + 
Q_{3,k}
\end{align*}
Hence from \eqref{eqhatD},
\begin{align*}
Q_{1,k}
 &=
2\eta \hat\bpsi^{k-1}\,^\top
\hat\Db^{k-1} \bpsi^{k-1}
 \le
2\eta \cdot \rho \radius \| \bpsi^{k-1} \|^2
\end{align*}
and
\begin{align*}
Q_{2,k}
 &= 
\eta^2 \left\|
\hat\Db^{k-1} \bpsi^{k-1}
+ 
\bzeta_d^{k-1} 
\right\|^2
 \\&\le
2\eta^2  \| \hat\Db^{k-1} \bpsi^{k-1} \|^2 + 2\eta^2  \|\bzeta_d^{k-1} \|^2
 \\&\le
2\eta^2 \cdot \rho^2 \radius^2 \| \bpsi^{k-1} \|^2 + 8\eta^2 L^2 \| \bpsi^{k-1}  \|^2
 \\&\le
16\eta^2 \cdot L^2 \| \bpsi^{k-1} \|^2
,
\end{align*}
and
\begin{align*}
Q_{3,k}
 &=
2\eta \hat\bpsi^{k-1}\,^\top \bzeta_d^{k-1} 
.
\end{align*}
Under the event $\cH_0$ happens, by induction, when $k=0$, $\| \bpsi^{0}\| = \| \e_1\|\leq 2$, suppose $\| \bpsi^{l}\|\leq 2$ holds for all $l = 0$ to $k-1$,  we have  for the step $k$,
\begin{align*}
\| \bpsi^k \|^2
 &\le
\| \bpsi^0 \|^2
 +
\sum_{s=1}^k Q_{1,s}
 + 
\sum_{s=1}^k Q_{2,s}
 + 
\sum_{s=1}^k Q_{3,s}
 \\&\le
1
+
2\eta 
\sum_{s=1}^{k} 
\rho \radius \| \bpsi^{s-1} \|^2
+
16\eta^2 \cdot L^2
\sum_{s=1}^k  \| \bpsi^{s}  \|^2
+
2\eta 
\sum_{s=1}^k 
\hat\bpsi^{s-1}\,^\top \bzeta_d^s
 \\&{\le}
1
+
2 \rho \radius \cdot 4 \cdot \eta k
+
16\eta^2 \cdot L^2 \cdot 4 \cdot k
+
2\eta 
\sum_{s=1}^{k} 
\hat\bpsi^{s-1}\,^\top \bzeta_d^s\cdot \cI_{\|\bpsi^{s-1} \|\leq 2}
 \\&\overset{a}{\leq}
1
+
16 \rho \radius \cdot \eta k
+
2\eta 
\sum_{s=1}^{k} 
\hat\bpsi^{s-1}\,^\top \bzeta_d^s\cdot \cI_{\|\bpsi^{s-1} \|\leq 2}
 \le
1+1+2 = 4
,
\end{align*}
where   $\overset{a}\leq$ uses $\eta\leq\frac{\rho \radius}{8 L^2 }$ (because \eqref{parametersetting} and $B\leq \frac{1}{L}$).
This conclude the proof of \eqref{psinorm_express}. For $\e_1^\top \bpsi^k$, we have
\begin{align*}
\e_1^\top \bpsi^k
&=
\e_1^\top \bpsi^0
+
\sum_{s=0}^{k-1} 
\eta \e_1^\top \hat\Db_{s} \bpsi^{1}
+ 
\sum_{s=0}^{k-1} 
\eta  \e_1^\top \bzeta_d^s
\\&\ge
1
-
 \eta \sum_{s=0}^{k-1} 
2\rho  \radius \cdot  \|\bpsi^{s-1} \| 
+ 
\eta \sum_{s=1}^{k} 
 \e_1^\top \bzeta_d^s\cdot {\cI_{\|\bpsi^{s-1} \|\leq 2}}
\\&\ge
1
-
 \eta \cdot k \cdot 2\rho  \radius \cdot 2
+ 
\eta \sum_{s=0}^{k-1} 
 \e_1^\top \bzeta_d^s\cdot {\cI_{\|\bpsi^{s-1} \|\leq 2}}
\geq
1 - \frac18 - \frac28
 >
\frac12
,
\end{align*}
concluding \eqref{psi1_express}, and hence the lemma.
\end{proof}

\item
Now, we have all the ingredients necessary to prove our final lemma.

\begin{proof}[Proof of Lemma \ref{lemm:key}]
Recall that the deterministic time $\Ko$ was defined in \eqref{Todef}, we have on the event $(\cK_1 > \Ko)$ that $\z^{\Ko} = \bar{\w}^{\Ko} - \w^{\Ko}$ and hence $\|\z^{\Ko} \| \le \|\bar{\w}^{\Ko}\| + \|\w^{\Ko}\| \le 2\radius$, which concludes
\beq\label{setrl1}
(\cK_1 > \Ko) \subseteq (\|\z^{\Ko}\| \le 2\radius)
.
\eeq
In the mean time, from \eqref{yk} we know that on the event $(\cK_1 > \Ko)$, $\z^k = \bar{\w}^k - \w^k$ for all $k\le \Ko$, from \eqref{psi1_express} 
$$
\e_1^\top \bpsi^{\Ko}
 >
\frac12
.
$$
So on the event $(\cK_1 > \Ko) \cap \cH_{0}$
\begin{align*}
\| \z^{\Ko} \|
 &=
q \left(1 + \eta \delta_m \right)^{\Ko} \| \bpsi^{\Ko} \|
 \ge
q_0 \left(1 + \eta (\epH) \right)^{\Ko} | \e_1^\top \bpsi^{\Ko} |
 \\&>
q_0\cdot \frac{6\radius}{q_0} \cdot \frac12
=
3\radius
,
\end{align*}
so
\beq\label{setrl2}
(\cK_1 > \Ko) \cap \cH_{0} \subseteq (\| \z^{\Ko} \| > 3\radius)
\eeq
Combining \eqref{setrl1}, \eqref{setrl2} and the fact that $\radius > 0$ gives
$$
(\cK_1 > \Ko) \cap \cH_{0} 
\subseteq (\|\z^{\Ko}\| \le 2\radius) \cap \cH_{0}  \cap (\| \z^{\Ko} \| > 3\radius)
= \varnothing
,
$$
and hence
$
(\cK_1 > \Ko) \subseteq \cH_{0}^c
$
which leads to
$$
\PP( \cK_1 > \Ko ) \le \PP( \cH_{0}^c ) \le 0.1
,
$$
proving \eqref{cT1bdd}.
Hence \eqref{kep_rep} and Lemma \ref{lemm:key} hold.

\end{proof}

\end{enumerate}

\section{Deferred Proofs of Part \mbox{II}: Faster Descent}\label{prooftwo}

\subsection{Definition and Preliminary}
  In Part \mbox{II},  we still use $\bH$ to denote $\nabla^2 f(\x^{0})$ and let $$\bxi^{k+1} = \nabla F(\x^k, \bzeta^{k+1}) - \nabla f(\x^k), \quad k\geq 0.$$
  
  Recall the definition of  $\cS$, $\cP_{\cS}$, $\cS\bot$, and $\cP_{\cS\bot}$ in   Appendix \ref{2Sufficient Descent}. 
  Let $\uu^k = \cP_{\cS} \left(\x^k - \x^{0}\right)$, and $\vv^k = \cP_{\cS\bot}\left(\x^k - \x^{0}\right)$.
 We can decompose the update equation of SGD as: 
\begin{eqnarray}
\uu^{k+1} &=& \uu^{k} - \eta \cP_{\cS}\nabla f\left(\x^k\right) - \eta  \cP_{\cS}\bxi^{k+1}. \label{update u}\\
\vv^{k+1}  &=& \vv^{k} - \eta \cP_{\cS\bot}\nabla f\left(\x^k\right) - \eta  \cP_{\cS\bot}\bxi^{k+1},  \label{update v}
\end{eqnarray}
with $k \geq 0$. And $\uu^{0} = \mathbf{0}$, $\vv^{0} = \mathbf{0}$.
From the definition of  $g(\x)$ in Appendix  \ref{2Sufficient Descent},  we have
\begin{eqnarray}
g(\x) &=&  \left[\nabla f(\x^{0})\right]^\bT\left(\x - \x^{0}\right) +  \frac{1}{2}\left[\x - \x^{0}\right]^\bT \bH \left[\x - \x^{0}\right] \\
&=&  \left[\nabla f(\x^{0})\right]^\bT\left[\cP_{\cS} \left(\x - \x^{0}\right) + \cP_{\cS\bot} \left(\x - \x^{0}\right)\right]+ \frac{1}{2}\left[\cP_{\cS} \left(\x - \x^{0}\right)\right]^\bT \bH \left[\cP_{\cS} \left(\x - \x^{0}\right)\right]\notag\\
&& +  \frac{1}{2}\left[\cP_{\cS\bot} \left(\x - \x^{0}\right)\right]^\bT \bH \left[\cP_{\cS\bot} \left(\x - \x^{0}\right)\right]\notag\\
&=& \left[\cP_{\cS} \nabla f(\x^{0})\right]^\bT\left[\cP_{\cS} \left(\x - \x^{0}\right)\right]+ \left[\cP_{\cS\bot} \nabla f(\x^{0})\right]^\bT\left[\cP_{\cS\bot} \left(\x - \x^{0}\right)\right]\notag\\
&& + \frac{1}{2}\left[\cP_{\cS} \left(\x - \x^{0}\right)\right]^\bT \bH_{\cS} \left[\cP_{\cS} \left(\x - \x^{0}\right)\right] +  \frac{1}{2}\left[\cP_{\cS\bot} \left(\x - \x^{0}\right)\right]^\bT \bH_{\cS\bot} \left[\cP_{\cS\bot} \left(\x - \x^{0}\right)\right], \notag
\end{eqnarray} 
where in the last equality  we use $\cP^2_{\cS} = \cP_{\cS}$ and $\cP^2_{\cS\bot} = \cP_{\cS\bot}$, because $\cP_{\cS}$ and $\cP_{\cS\bot}$ are projection matrices.
Thus if  $\uu = \cP_{\cS} (\x-\x^0)$ and $\vv = \cP_{\cS\bot} (\x-\x^0)$,  we have
\begin{eqnarray}
g(\x) = g_{\cS}(\uu) + g_{\cS\bot}(\vv).
\end{eqnarray} 
For clarify,  we denote $\nabla_{\uu} f(\x^k) = \cP_{\cS}\nabla f(\x^k)$, and  $\nabla_{\vv} f(\x^k)  = \cP_{\cS\bot}\nabla f(\x^k)$, respectively. Similarly, let $\bxi_{\uu}^k = \cP_{\cS}\bxi^k$, and  $\bxi_{\vv}^k = \cP_{\cS\bot}\bxi^k$.   In the following, we denote $\sK = \sK_0\wedge K_0$ which is also a stopping time.  The  Lemma below  is basic to obtain our result.

\begin{lemma}
Given $\x^0$, for any $\x$, if $\left\| \x - \x^{0}\right\| \leq B$, then
	\begin{eqnarray}\label{basic1}
	\|\nabla f(\x) - \nabla g(\x) \| \leq \rho B^2/2.
	\end{eqnarray}
  For any symmetric matrix $\A$, with $0< a \leq \frac{1}{\|\A\|_2}$,  for any $i=0,1,\dots$, and $j= 0,1,\dots$, we have
  \begin{eqnarray}\label{basic2}
  \left\|(\I - a \A)^i \A (\I - a \A)^j\right\|_2 \leq \frac{1}{a(i+j+1)}.
  \end{eqnarray}
\end{lemma}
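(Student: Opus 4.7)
The first bound is a direct Taylor-remainder estimate. I would compute $\nabla g(\x) = \nabla f(\x^{0}) + \nabla^2 f(\x^{0})(\x - \x^{0})$ straight from the quadratic definition of $g$, and then write $\nabla f(\x) - \nabla f(\x^{0}) = \int_{0}^{1} \nabla^2 f(\x^{0} + t(\x - \x^{0}))(\x - \x^{0})\, dt$ via the fundamental theorem of calculus. Rewriting the Hessian term at $\x^{0}$ as the constant integrand $\int_{0}^{1} \nabla^2 f(\x^{0})(\x - \x^{0})\, dt$ turns the difference into an integral of $[\nabla^2 f(\x^{0} + t(\x - \x^{0})) - \nabla^2 f(\x^{0})](\x - \x^{0})$. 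The Hessian-Lipschitz condition in Assumption~\ref{assu:smooth} bounds the operator norm of this inner difference by $\rho\, t\, \|\x - \x^{0}\|$, so integrating $t$ over $[0,1]$ produces the factor $1/2$, and $\|\x - \x^{0}\| \le B$ yields the stated $\rho B^{2}/2$.

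For the second bound, I would spectrally decompose the symmetric $\A = \mathbf{V}\Lambda \mathbf{V}^{\top}$, so that $(\I - a\A)^{i}$, $\A$, and $(\I - a\A)^{j}$ all share the eigenbasis of $\A$ and mutually commute. The spectral norm of the product is therefore $\max_{l}|\lambda_{l}|\,|1 - a\lambda_{l}|^{i+j}$. Writing $n := i + j$ and making the change of variable $t := a\lambda_{l}$, the target inequality reduces to the scalar claim $\max_{t \in [0,1]} t(1-t)^{n} \le 1/(n+1)$. Differentiating locates the maximum at $t^{*} = 1/(n+1)$, at which the expression equals $(n/(n+1))^{n}/(n+1) \le 1/(n+1)$, since $(n/(n+1))^{n} \le 1$. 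Dividing by $a$ reproduces $1/(a(n+1))$.

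The main delicate point, which I would make explicit in the proof, is the admissible range of $a\lambda_{l}$. The hypothesis $a \le 1/\|\A\|_{2}$ only guarantees $a|\lambda_{l}| \le 1$; if $\A$ has a strictly negative eigenvalue then $1 - a\lambda_{l}$ can exceed $1$ and the scalar maximand may be as large as $2^{n}/a$, which would violate the stated bound. Since the lemma is invoked downstream with positive-semidefinite operators (namely $\bH_{\cS}$, and $-\bH_{\cS\bot}$ after a sign flip), the intended regime always forces $a\lambda_{l} \in [0,1]$, and I would either include this PSD hypothesis explicitly or restrict the argument to that case. Apart from this clarification, both parts are routine manipulations and I do not foresee a substantive obstacle.
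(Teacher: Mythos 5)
Your proof of both parts is correct and follows essentially the same route as the paper: the first bound is the paper's Taylor-remainder integral argument using Hessian-Lipschitzness, and the second bound is the paper's spectral decomposition followed by the scalar maximization of $\lambda(1 - a\lambda)^{i+j}$. Your additional remark about the admissible range of $a\lambda_l$ is a real and worthwhile refinement: the lemma as stated claims ``any symmetric matrix $\A$,'' yet the scalar optimization in the paper's own proof implicitly restricts to $a\lambda_l \in [0,1]$, i.e.\ to $\A \succeq 0$. As you note, $\lambda_l < 0$ makes $|\lambda_l||1 - a\lambda_l|^{i+j}$ grow like $2^{i+j}/a$, which breaks the bound; the spectral norm is the maximum \emph{absolute value} of the eigenvalues of $(\I - a\A)^i \A (\I - a\A)^j$, not the maximum eigenvalue, so the paper's unconstrained-looking derivative analysis only closes once nonnegativity is assumed. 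Since the lemma is invoked only with $\bH_{\cS}$ (which by construction keeps only the strictly positive eigenvalues of $\nabla^2 f(\x^0)$ and is hence PSD), the downstream arguments are unaffected, but an explicit PSD hypothesis in the lemma statement would be cleaner — your proposal to add it is the right call.
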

\begin{proof}
	For \eqref{basic1}, we have
	\begin{eqnarray}
	&&\left\|\nabla f(\x) - \nabla g(\x) \right\|\notag\\
	&=&\left\|\nabla f(\x) - \nabla f(\x^{0}) - \bH (\x - \x^{0}) \right\|\notag\\
	&=&\left\|\left(\int_{0}^1 \nabla^2 f(\x^{0} + \theta (\x - \x^0)) d_{\theta} - \bH\right)  \left(\x - \x^{0}\right) \right\|\notag\\
	&\overset{a}\leq& \left\|\left(\int_{0}^1  \rho \theta \left\| \x - \x^0\right\| d_{\theta} \right) \right\|   \left\|\x - \x^{0}\right \|\notag\\
	&\leq& \rho B^2/2
	\end{eqnarray} 
	where in $\overset{a}\leq$, we use \eqref{HessSmooth} that $f(\x)$ has $\rho$-Lipschitz continuous Hessian.
	
	\eqref{basic2} is from \cite{jin2017escape}. To prove it, suppose the eigenvalue of $\{\A\}$  is $\{\lambda_l\}$, thus the  eigenvalue of $(\I - a \A)^i\bH (\I - a \A)^{j}$ is $\{\lambda_l (1 - a \lambda_l)^{i+j} \}$. For the function of $\lambda (1 - a \lambda)^{i+j}$, we can compute out its derivative as $ (1 - a\lambda)^{i+j} - (i+j)a \lambda (1 - a \lambda)^{i+j-1}$. Then with simple analysis, we can find that  the maximal point is obtained only at $\frac{1}{(1+i+j)a}$. If $i=0$ and $j=0$, \eqref{basic2} clearly holds. Otherwise, we have
\begin{eqnarray}
\left\|(\I - a \A)^i \A (\I - a \A)^j\right\|_2 \leq \frac{1}{(1+j+i)a} \left(1 - \frac{1}{1+j+i}\right)^{j+i} \leq \frac{1}{(1+j+i)a}.
\end{eqnarray}
\end{proof}

\subsection{Analysis on Quadratic  Approximation}\label{quadratic}
As have been introduced  before, our main technique to obtain a faster $\tO(\ep^{-3.5})$ convergence rate is by separately analyzing  the two quadratic  approximations: $g_{\cS}(\cdot)$ and $g_{\cS\bot}(\cdot)$. We will show in this section that the noise effect can be upper bounded by $\tO(\ep^{1.5})$ instead of $\cO(\ep)$ via our tool.

We first   summarize our result for $g_{\cS}(\cdot)$  in the following lemma:
\begin{lemma}\label{analysis on uu}
	Set hyper-parameters  in \eqref{parametersetting}  for  Algorithm \ref{algo:SGD}. With probability at least $1 - p/4$,    we have
	\begin{eqnarray}\label{uu end4}
	\!\!\!\!\!\!\!\!g_{\cS}(\uu^{\sK})  - g_{\cS} (\uu^{0})&\leq&-\frac{25\eta\sum_{k=0}^{\sK-1}\left\|  \nabla  g_{\cS} \left(\y^k\right)\right\|^2  }{32} +4\eta \sigma^2 \left(\log(K_0)+3\right) \log(48K_0/p) + \eta \rho^2 B^4 K_0\notag\\
	&=&-\frac{25\eta\sum_{k=0}^{\sK-1}\left\|  \nabla  g_{\cS} \left(\y^k\right)\right\|^2  }{32} + \tO\left(\ep^{1.5}\right).
	\end{eqnarray}
\end{lemma}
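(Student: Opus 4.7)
\textbf{Proof plan for Lemma \ref{analysis on uu}.} The plan is to compare the stochastic $\cS$-projected trajectory $\uu^k$ against the deterministic gradient-descent trajectory $\y^k$ via the coupled error $\z^k:=\uu^k-\y^k$, then combine the quadratic Taylor identity for $g_{\cS}$ with the spectral decay $\|\bH_{\cS}(\I-\eta\bH_{\cS})^{i}\|\le 1/[\eta(i+1)]$ from \eqref{basic2} and martingale concentration to keep the noise contribution within the $\tO(\ep^{1.5})$ budget. The decisive structural fact is that $\cS$ is invariant under $\bH$, which yields $\cP_{\cS}\nabla g(\x^k)=\nabla g_{\cS}(\uu^k)$, so the $\cS$-projected SGD update simplifies to
\[
\uu^{k+1}=\uu^k-\eta\nabla g_{\cS}(\uu^k)-\eta\e^k-\eta\bxi_{\uu}^{k+1},
\]
where $\e^k:=\cP_{\cS}[\nabla f(\x^k)-\nabla g(\x^k)]$ obeys $\|\e^k\|\le \rho B^2/2$ on $\{k<\sK\}$ by \eqref{basic1}. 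Subtracting the deterministic update $\y^{k+1}=\y^k-\eta\nabla g_{\cS}(\y^k)$ and using $\nabla g_{\cS}(\uu^k)-\nabla g_{\cS}(\y^k)=\bH_{\cS}\z^k$ gives the linear recursion $\z^{k+1}=(\I-\eta\bH_{\cS})\z^k-\eta(\e^k+\bxi_{\uu}^{k+1})$ with $\z^0=\mathbf{0}$.

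First I would apply the exact quadratic identity $g_{\cS}(\uu^{k+1})-g_{\cS}(\uu^k)=\langle\nabla g_{\cS}(\uu^k),\uu^{k+1}-\uu^k\rangle+\tfrac12(\uu^{k+1}-\uu^k)^{\bT}\bH_{\cS}(\uu^{k+1}-\uu^k)$, plug in the update, and exploit $\eta L\ll 1$ to absorb the second-order smoothness piece into the $-\eta\|\nabla g_{\cS}(\uu^k)\|^2$ term. Next I substitute $\nabla g_{\cS}(\uu^k)=\nabla g_{\cS}(\y^k)+\bH_{\cS}\z^k$ wherever $\nabla g_{\cS}(\uu^k)$ appears, and apply Young's inequality $\|a+b\|^2\ge (1-\alpha)\|a\|^2-\tfrac{1-\alpha}{\alpha}\|b\|^2$ with $\alpha=7/32$ to lower-bound $\|\nabla g_{\cS}(\uu^k)\|^2$ by the target $\tfrac{25}{32}\|\nabla g_{\cS}(\y^k)\|^2$ minus a multiple of $\|\bH_{\cS}\z^k\|^2$. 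The deterministic cross-terms involving $\e^k$ are bounded by Young together with $\|\e^k\|\le \rho B^2/2$; summed over $\sK\le K_0$ steps they yield the $\eta\rho^2 B^4 K_0$ piece.

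The stochastic pieces are handled in two concentration steps. For the linear cross-term $-\eta\sum_{k=0}^{\sK-1}\langle\nabla g_{\cS}(\uu^k),\bxi_{\uu}^{k+1}\rangle$, which is a $\{\cF^k\}$-martingale whose conditional variance is at most $\eta^2\sigma^2\|\nabla g_{\cS}(\uu^k)\|^2$, I would invoke the data-dependent Bernstein inequality (Theorem \ref{proh2}) and then trade off against a small fraction of the negative descent term by AM--GM, leaving the advertised $4\eta\sigma^2(\log K_0+3)\log(48K_0/p)$ residual. For the leftover $\sum\eta\|\bH_{\cS}\z^k\|^2$, I would unroll
\[
\bH_{\cS}\z^k=-\eta\sum_{j=0}^{k-1}\bH_{\cS}(\I-\eta\bH_{\cS})^{k-1-j}\bigl(\e^j+\bxi_{\uu}^{j+1}\bigr),
\]
perform a Fubini interchange of the $k,j$ sums to expose a single-index martingale sum with deterministic coefficients whose squared norms are controlled by the square-summable sequence $\{1/[\eta(k-j)]^2\}$ thanks to \eqref{basic2}, and apply the dimension-free vector-valued Azuma--Hoeffding bound (Theorem \ref{proh}). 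Combining all bounds via union bound over the two concentration events (total failure probability $p/4$) then yields \eqref{uu end4}.

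\textbf{Main obstacle.} The hardest part is to \emph{simultaneously} preserve the sharp $\tfrac{25}{32}$ coefficient and control the self-correlated residual $\sum_k\|\bH_{\cS}\z^k\|^2$: a naive uniform bound $\|\bH_{\cS}\z^k\|^2\lesssim\sigma^2$ inflates the contribution to $\tO(\eta K_0\sigma^2)=\tO(\ep^{-0.5})$, overshooting the target by a large polynomial factor. The decisive rescue is the spectral-decay factor $1/[\eta(i+1)]$ from \eqref{basic2}, which after the Fubini interchange turns the relevant martingale into one with $\ell_2$-summable coefficients, trimming the ``noise budget'' down to $\tO(\eta\sigma^2)=\tO(\ep^{1.5})$. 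Handling this delicate cancellation while keeping all constants explicit is the technical novelty that powers the paper's $\tO(\ep^{-3.5})$ rate over the classical $\cO(\ep^{-4})$ analysis based solely on gradient-Lipschitzness.
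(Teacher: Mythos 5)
Your setup (introduce the auxiliary deterministic GD trajectory $\y^k$, the coupled error $\z^k=\uu^k-\y^k$, the linear recursion for $\z^k$, and the spectral bound \eqref{basic2}) matches the paper, but the central step of your argument---a \emph{per-step} quadratic expansion $g_{\cS}(\uu^{k+1})-g_{\cS}(\uu^k)=\langle\nabla g_{\cS}(\uu^k),\uu^{k+1}-\uu^k\rangle+\frac12(\uu^{k+1}-\uu^k)^{\bT}\bH_{\cS}(\uu^{k+1}-\uu^k)$ followed by telescoping---does not close, and the proposed rescue does not save it.

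The problem is the second-order piece. Since $\bH_{\cS}\succeq 0$ (unlike $\bH_{\cS\bot}$, where the analogous piece is nonpositive and can simply be dropped), the term $\frac{\eta^2}{2}(\bxi_{\uu}^{k+1})^{\bT}\bH_{\cS}\bxi_{\uu}^{k+1}\ge 0$ is genuinely present at every step and is only controlled by $\frac{\eta^2 L\sigma^2}{2}$ under Assumption \ref{assu:noise}. Summed over $\sK\le K_0$ steps this gives a residual of order $\eta^2 L\sigma^2 K_0\asymp L\sigma^2\eta\cdot(\eta K_0)\asymp L\sigma^2\ep^{1.5}\cdot\ep^{-0.5}=\tO(\ep)$, which \emph{exceeds} the $\tO(\ep^{1.5})$ budget (and in fact also exceeds the per-round descent $\frac{B^2}{7\eta K_0}\asymp\ep^{1.5}$ that Proposition \ref{faster descent} needs, so the downstream argument breaks). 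This is exactly the classical $\cO(\ep^{-4})$ bottleneck from \eqref{tra}; a per-step expansion on the positive-curvature component cannot beat it. Your Young step does not help here because the offending term has nothing to do with $\nabla g_{\cS}(\uu^k)$ and cannot be absorbed into $-\eta\|\nabla g_{\cS}(\uu^k)\|^2$.

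The second residual you introduce, $\eta\sum_{k<\sK}\|\bH_{\cS}\z^k\|^2$, is also too large, and the ``Fubini interchange'' cannot trim it. Unrolling and using \eqref{basic2} gives $\EE\|\bH_{\cS}\z^k\|^2\lesssim\sigma^2\sum_{m\ge 1} m^{-2}=\cO(\sigma^2)$ for \emph{each fixed} $k$; summing over $K_0\asymp\ep^{-2}$ values of $k$ and multiplying by $\eta\asymp\ep^{1.5}$ yields $\tO(\sigma^2\ep^{-0.5})$. The spectral decay controls the accumulation \emph{inside} one $\z^k$, but you still pay a fresh $\cO(\sigma^2)$ for every time index $k$ you sum over. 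Moreover $\sum_k\|\bH_{\cS}\z^k\|^2$ is a quadratic form in the martingale differences, not a linear martingale: after the $(k,j)$ interchange the diagonal $j=l$ blocks are nonnegative and nonrandom-signed, so they are not martingale differences and Theorem \ref{proh} does not apply to them.

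What the paper actually does is structurally different in a way that matters: it never telescopes $g_{\cS}(\uu^{k+1})-g_{\cS}(\uu^k)$. Instead it telescopes the \emph{deterministic} iteration $g_{\cS}(\y^{k+1})-g_{\cS}(\y^k)$ (which is pure GD descent, no noise at all, yielding the clean \eqref{uu end2}), and then compares $g_{\cS}(\uu^{\sK})$ to $g_{\cS}(\y^{\sK})$ by a \emph{single} application of the exact quadratic identity at the endpoint $\sK$. That leaves only $\langle\nabla g_{\cS}(\y^{\sK}),\z^{\sK}\rangle$ (handled in \eqref{z8}--\eqref{probability3}, where $\nabla g_{\cS}(\y^{j})=\bH_{\cS}\ty^{j}$ lets the Azuma variance absorb into $-\sum\|\nabla g_{\cS}(\y^k)\|^2$) and one evaluation of $(\z^{\sK})^{\bT}\bH_{\cS}\z^{\sK}$ (Lemma \ref{zz}, \eqref{z5}). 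For this single endpoint quadratic form, the weights $\bH^{1/2}_{\cS}(\I-\eta\bH_{\cS})^{\sK-j}$ are $\ell_2$-summable over $j$ with mass $\tO(\eta\log K_0)$ by \eqref{basic2}, so Theorem \ref{proh} gives $\tO(\eta\sigma^2\log^2)\asymp\ep^{1.5}$. That endpoint comparison---rather than any Fubini trick---is the idea that buys the factor of $\ep^{0.5}$ over the classical analysis.
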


\noindent\textbf{Proofs of Lemma \ref{analysis on uu}}\\
Our novel technique to analyze $g_{\cS}(\cdot)$ is by first considering an auxiliary Gradient Descent trajectory, which performs update as:
\begin{eqnarray}\label{y update}
\y^{k+1} = \y^k - \eta \nabla g_{\cS}\left(\y^k\right), \quad k\geq 0.
\end{eqnarray}
and $\y^{0} = \uu^{0}$.    $\y^{k}$ preforms Gradient Decent on $ g_{\cS} (\cdot)$,  which is deterministic given $\x^0$. We  study the property of $\y^{\sK}$ and obtain the following standard results:
\begin{itemize}
	\item Because $g_{\cS}(\cdot)$ has $L$-Lipschitz continuous gradient ($\| \bH_{\cS}\|_2 \leq L$), we have
	\begin{eqnarray}\label{update y}
	&&g_{\cS}\left(\y^{k+1}\right)\notag\\
	&\leq&g_{\cS}\left(\y^{k}\right) + \< \nabla g_{\cS}\left(\y^{k}\right), \y^{k+1} - \y^k  \> +  \frac{L}{2}\left\|\y^{k+1} - \y^{k}\right\|^2\notag\\
	&\overset{\eqref{y update}}{=}& g_{\cS} \left(\y^k\right)  - \eta \left(1 - \frac{L\eta}{2}\right)\left\| \nabla g_{\cS}\left(\y^{k}\right)\right\|^2.
	\end{eqnarray}
	\item By telescoping \eqref{update y} from $0$ to $\sK-1$, we have
	\begin{eqnarray}\label{uu end2}
	g_{\cS}\left(\y^{\sK}\right)&\leq& g_{\cS}\left(\y^{0}\right) - \eta\left(1 - \frac{L\eta}{2}\right) \sum_{k = 0}^{\sK-1}\left\| \nabla g_{\cS}\left(\y^{k}\right)\right\|^2\notag\\
	& \overset{L \eta \leq \frac{1}{16}}{\leq}& g_{\cS}\left(\y^{0}\right) - \frac{31\eta}{32} \sum_{k = 0}^{\sK-1}\left\| \nabla g_{\cS}\left(\y^{k}\right)\right\|^2.
	\end{eqnarray}
\end{itemize}

To obtain Lemma \ref{analysis on uu}, 
 we  bound the difference between $\uu^\sK $ and $\y^{\sK}$. Define
$$\z^{k} \coloneqq \uu^k - \y^k.$$
The remaining is to conclude the  properties of $\z^{\sK}$, stated as follows:
\begin{lemma}\label{zz}
	With probability at least  $1 - p/6$, we have
	\begin{eqnarray}\label{z4}
	\left\| \z^{\sK}\right\|\leq \frac{3B}{32}\asymp \ep^{0.5}.
	\end{eqnarray}
	and 
	\begin{eqnarray}\label{z5}
	\left(\z^{\sK}\right)^\bT \bH_{\cS} \left(\z^{\sK}\right) \leq   8\sigma^2 \eta \left(\log(K_0)+1\right) \log(48K_0/p) + \eta \rho^2 B^4 K_0\asymp \ep^{1.5}.
	\end{eqnarray}
\end{lemma}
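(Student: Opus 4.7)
\textbf{Proof proposal for Lemma \ref{zz}.} My plan is to derive a clean closed-form recursion for $\z^k$, then separate it into a deterministic ``residual'' part (stemming from the Taylor remainder when replacing $\nabla f(\x^k)$ by its quadratic approximation) and a stochastic ``noise'' part, and finally control both parts using the smoothness bound \eqref{basic1}, the spectral estimate \eqref{basic2}, and the vector Azuma--Hoeffding inequality (Theorem \ref{proh}). Subtracting the updates \eqref{update u} and \eqref{y update} and using $\nabla g_{\cS}(\uu)=\cP_{\cS}\nabla f(\x^0)+\bH_{\cS}\uu$, I would write
\[
\z^{k+1}=(\I-\eta\bH_{\cS})\z^k-\eta\,\r^k-\eta\,\bxi_{\uu}^{k+1},
\qquad
\r^k:=\cP_{\cS}\bigl(\nabla f(\x^k)-\nabla g(\x^k)\bigr),
\]
and unroll with $\z^0=0$ to obtain $\z^k=-\eta\sum_{j=0}^{k-1}(\I-\eta\bH_{\cS})^{k-1-j}(\r^j+\bxi_{\uu}^{j+1})$. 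Since $j<\sK\le\sK_0$ implies $\|\x^j-\x^0\|\le B$, the bound \eqref{basic1} gives $\|\r^j\|\le \rho B^2/2$ for every $j<\sK$.

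For \eqref{z4}, split $\z^k=\D^k+\N^k$ into the residual and noise contributions. Since $\bH_{\cS}$ has all eigenvalues in $(0,L]$ and $\eta L\le 1$, $\|(\I-\eta\bH_{\cS})^m\|\le 1$, so $\|\D^{\sK}\|\le \eta K_0\,\rho B^2/2$, which with the parameter setting \eqref{parametersetting} is $\tO(\ep^{0.5})$. For $\N^k$ I would apply the vector-valued Azuma--Hoeffding inequality (Theorem \ref{proh}) to the martingale difference sequence $\{-\eta(\I-\eta\bH_{\cS})^{k-1-j}\bxi_{\uu}^{j+1}\}_{j=0}^{k-1}$, whose $j$-th term has norm $\le\eta\sigma$; this yields $\|\N^k\|\le C\eta\sigma\sqrt{K_0\log(48K_0/p)}$ with failure probability $\le p/(12K_0)$. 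Taking a union bound over $k=1,\dots,K_0$ and using $\sK\le K_0$ transfers the bound to $k=\sK$, giving \eqref{z4} after plugging in the parameters.

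For the Hessian-weighted bound \eqref{z5}, I would write $(\z^{\sK})^{\top}\bH_{\cS}\z^{\sK}=\|\bH_{\cS}^{1/2}\z^{\sK}\|^2$ and exploit the key spectral estimate \eqref{basic2}, which (applied with $\A=\bH_{\cS}$, $a=\eta$, $i=j=m$ and followed by $\|A\|^2=\|A^{\top}A\|$) gives
\[
\|\bH_{\cS}^{1/2}(\I-\eta\bH_{\cS})^{m}\|^{2}\le \frac{1}{\eta(2m+1)}.
\]
Applying this per-summand, the noise martingale $\{-\eta\,\bH_{\cS}^{1/2}(\I-\eta\bH_{\cS})^{k-1-j}\bxi_{\uu}^{j+1}\}_{j=0}^{k-1}$ has squared bounds summing to $\sum_{j=0}^{k-1}\eta\sigma^2/(2(k-1-j)+1)\le \eta\sigma^2(\log K_0+1)$; Theorem \ref{proh} and a union bound then yield $\|\bH_{\cS}^{1/2}\N^{\sK}\|^2\le 8\eta\sigma^2(\log K_0+1)\log(48K_0/p)$ with probability $\ge 1-p/12$. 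For the residual part, the same spectral estimate combined with Cauchy--Schwarz gives $\|\bH_{\cS}^{1/2}\D^{\sK}\|^2\le \eta^{2}(\rho B^2/2)^{2}\bigl(\sum_{j=0}^{\sK-1}\frac{1}{\eta(2j+1)^{1/2}}\bigr)^2\le \eta\rho^2B^4 K_0$ up to constants. Summing produces \eqref{z5}.

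\textbf{Main obstacles.} The delicate point is the $\bH_{\cS}^{1/2}$-weighted noise bound: one must avoid the naive estimate $\|\bH_{\cS}^{1/2}\z^{\sK}\|^2\le L\|\z^{\sK}\|^2=\cO(\ep)$, which is too weak by a factor $\ep^{-0.5}$. The spectral identity \eqref{basic2} provides exactly the extra decay needed to accumulate the squared summands only logarithmically in $K_0$ rather than linearly, which is what shrinks the noise contribution from $\cO(\ep)$ to $\tO(\ep^{1.5})$. A secondary care-point is passing from fixed $k$ to the stopping time $\sK$: since $\sK\le K_0$ almost surely, a union bound over $k\in\{1,\dots,K_0\}$ (with failure probability $p/(12K_0)$ each) is enough, and the residual bound $\|\r^j\|\le \rho B^2/2$ holds deterministically for all $j<\sK$ because of the ball-controlled stopping rule.
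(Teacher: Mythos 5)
Your proof matches the paper's almost step for step: the same unrolled recursion for $\z^k$, the same split into a Taylor-remainder term controlled by \eqref{basic1} and a martingale term controlled by the vector Azuma--Hoeffding inequality (Theorem~\ref{proh}) with a union bound over $k\le K_0$ to handle the stopping time $\sK$, and the same use of the spectral estimate \eqref{basic2} to shrink the $\bH_{\cS}$-weighted noise sum from $\cO(\ep)$ to $\tO(\ep^{1.5})$. Aside from cosmetic slips --- writing $1/(\eta\sqrt{2j+1})$ where $1/\sqrt{\eta(2j+1)}$ is meant, labelling the triangle-inequality step ``Cauchy--Schwarz'', and a factor-of-two imprecision in the Azuma constant before the cross-term doubling --- this is the paper's argument.
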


\begin{proof}[Proofs of Lemma \ref{zz}]
With $\z^{k} = \uu^k - \y^k$ being the difference iteration, we have
	\begin{eqnarray}\label{update of z}
	\z^{k+1} &=& \z^{k} - \eta \left(\nabla g_{\cS}\left(\uu^k\right)-\nabla g_{\cS}\left(\y^k\right)\right) -  \eta\left(\nabla_{\uu}f\left(\x^k\right) - \nabla g_{\cS}\left(\uu^k\right)\right) - \eta \bxi^{k+1}_{\uu}\notag\\
	&=& (\I - \eta \bH_{\cS} )\z^k - \eta\left(\nabla_{\uu}f\left(\x^k\right) - \nabla g_{\cS}\left(\uu^k\right)\right)- \eta \bxi^{k+1}_{\uu}, \quad k\geq 0.
	\end{eqnarray}
	And $\z^{0} = \mathbf{0}$.  Thus we can obtain the general solution of \eqref{update of z} as
	\begin{eqnarray}
	\z^k =  - \sum_{j =1}^{k}\eta (\I - \eta \bH_{\cS})^{k - j} \bxi_{\uu}^j  - \eta \sum_{j =0}^{k-1} (\I - \eta \bH_{\cS})^{k-1 - j} \left(\nabla_{\uu}f\left(\x^j\right) - \nabla g_{\cS}\left(\uu^j\right)\right), \quad k\geq 0.
	\end{eqnarray}
	
Setting $k = \sK$, by triangle inequality,  we have 
	\begin{eqnarray}\label{z3}
	\left\|\z^{\sK} \right\| \leq   \left\| \sum_{j =1}^{\sK}\eta (\I - \eta \bH_{\cS})^{\sK - j} \bxi_{\uu}^j\right\| + \left\| \eta \sum_{j =0}^{\sK-1} (\I - \eta \bH_{\cS})^{\sK-1 - j} \left(\nabla_{\uu}f\left(\x^j\right) - \nabla g_{\cS}\left(\uu^j\right)\right) \right\|.
	\end{eqnarray}
We separately bound the two terms in the right hand sides of  	\eqref{z3}. For the first term, for any fixed $l$ from $1$ to $K_0$, and any j from  $1$ to $l$,   we have 
	\begin{eqnarray}\label{z1}
 \EE \left[\eta (\I - \eta \bH_{\cS})^{l - j} \bxi_{\uu}^j\mid \cF^{j-1}\right]\overset{a}{=} \mathbf{0},\quad	\left\| \eta (\I - \eta \bH_{\cS})^{l - j} \bxi_{\uu}^j\right\| \overset{b}{\leq} \eta \sigma,
	\end{eqnarray}
	where $\overset{a}=$ uses that $\| \bxi^j_{\uu}\| = \| \cP_\cS\bxi^j\|$,   $\overset{b}\leq$ further uses $ \| \bxi^j_{\uu}\|\leq \| \bxi^j\|\leq \sigma$,  because $\cP $ is projection matrix, and the  the bounded noise  assumption in \eqref{bounded} and $\|(\I -  \eta \bH_{\cS})^{l - j} \|_2 \leq 1$ for all $j$ from $1$ to $l$.
	 Thus by the Vector-Martingale Concentration Inequality in Theorem \ref{proh}, we have with probability $1 - p/(12K_0)$, 
	\begin{eqnarray}\label{probability1}
	\left\| \sum_{j =1}^{l}\eta (\I - \eta \bH_{\cS})^{l - j} \bxi_{\uu}^j\right\|\leq 2\eta \sigma\sqrt{ (l) \log(48K_0/p)} \leq 2\eta \sigma\sqrt{ K_0 \log(48K_0/p)}\overset{\eqref{parametersetting}}{\leq} \frac{B}{16}.
	\end{eqnarray}
By union bound, with  probability  at least $1-p/12$, \eqref{probability1} holds for all $l$ from $1$ to $K_0$. Because  $1\leq \sK \leq K_0$, with probability  at least $1-p/12$,
	\begin{eqnarray}\label{probability1tt}
\left\| \sum_{j =1}^{\sK}\eta (\I - \eta \bH_{\cS})^{\sK - j} \bxi_{\uu}^j\right\|\leq  \frac{B}{16}.
\end{eqnarray}

	For the  second term in the right hand side of \eqref{z3},  we  have 
	\begin{eqnarray}\label{z2}
	 \left\| \eta \sum_{j =0}^{\sK-1} (\I - \eta \bH_{\cS})^{\sK-1 - j} \left(\nabla_{\uu}f\left(\x^j\right) - \nabla g_{\cS}\left(\uu^j\right)\right) \right\|&\overset{a}{\leq}& \eta \sum_{j =0}^{\sK-1}  \left\| \left(\nabla_{\uu}f\left(\x^j\right) - \nabla g_{\cS}\left(\uu^j\right)\right) \right\|\notag\\
	&\overset{b}{\leq}&\eta \sum_{j =0}^{\sK-1} \left\|\nabla f\left(\x^j\right) - \nabla g\left(\x^j\right)  \right\|\notag\\
	&\overset{\eqref{basic1}}{\leq } & \frac{\rho\eta B^2 K_0}{2} \overset{\eqref{parametersetting}}{\leq} \frac{B}{32},
	\end{eqnarray}
	where in $\overset{a}\leq$, we use triangle inequality, and $\left\|\mathbf{I} -\eta \bH_{\cS} \right\|^{\sK-1-j}_2 \leq 1$ with $j$ from $0$ to $\sK-1$; $\overset{b}\leq$ uses $\|\cP_\cS (\nabla f(\x)  - \nabla g(\x)) \| \leq \|\nabla f(\x)  - \nabla g(\x) \|$ becuase $\cP_\cS$ is projected matrix.  Substituting \eqref{probability1tt} and \eqref{z2} into \eqref{z3}, we obtain \eqref{z4}.

	To prove \eqref{z5}, using the fact that  $(\a + \b)^\bT \A (\a+ \b) \leq 2 \a^{\bT}\A \a +2 \b^{\bT}\A \b  $ holds for  any  symmetry positive  definite matrix $\A$, we have
	\begin{eqnarray}\label{zend1}
	&&\left(\z^{\sK}\right)^\bT \bH_{\cS} \left(\z^{\sK}\right) \\
	&\leq& 2\eta^2 \left(\sum_{j =1}^{\sK} (\I - \eta \bH_{\cS})^{\sK - j} \bxi_{\uu}^j \right)^\bT \bH_{\cS} \left(\sum_{j =1}^{\sK} (\I - \eta \bH_{\cS})^{\sK - j} \bxi_{\uu}^j \right)\notag\\
	&&\!\!\!\!\!\!\!\!\!\!\!\!\!\!\!\!+ 2\eta^2 \left( \sum_{j =0}^{\sK-1} (\I - \eta \bH_{\cS})^{\sK-1 - j} \left(\nabla_{\uu}f\left(\x^j\right) - \nabla g_{\cS}\left(\uu^j\right)\right)\right)^{\bT}\bH_{\cS} \left( \sum_{j =0}^{\sK-1} (\I - \eta \bH_{\cS})^{\sK-1 - j} \left(\nabla_{\uu}f\left(\x^j\right) - \nabla g_{\cS}\left(\uu^j\right)\right)\right)\notag\\
	&=& 2\left\| \eta\left(\sum_{j =1}^{\sK} \bH^{1/2}_{\cS}(\I - \eta \bH_{\cS})^{\sK - j} \bxi_{\uu}^j \right)   \right\|^2\notag\\
	&& + 2\eta^2 \sum_{j =0}^{\sK-1}\sum_{l =0}^{\sK-1}\left(\nabla_{\uu}f\left(\x^j\right) - \nabla g_{\cS}\left(\uu^j\right)\right)^\bT (\I - \eta \bH_{\cS})^{\sK -1- j}\bH_{\cS}(\I - \eta \bH_{\cS})^{\sK-1 - l}\left(\nabla_{\uu}f\left(\x^j\right) - \nabla g_{\cS}\left(\uu^j\right)\right)\notag\\
	&\overset{\eqref{basic1}}{\leq}&2\left\| \eta\left(\sum_{j =1}^{\sK}  \bH^{1/2}_{\cS}(\I - \eta \bH_{\cS})^{\sK - j} \bxi_{\uu}^j \right)   \right\|^2 + 2\eta^2\frac{\rho^2 B^4}{4}\sum_{j =0}^{\sK-1}\sum_{l=0}^{\sK-1} \left\|  (\I - \eta \bH_{\cS})^{\sK-1 - j}\bH_{\cS}(\I - \eta \bH_{\cS})^{\sK-1 - l} \right\|_2.\notag
	\end{eqnarray}
	
For 	the first term in the right hand side  of $\eqref{zend1}$,  for any fixed $l$ from $1$ to $K_0$, and any j from  $1$ to $l$,   we have 
$$  \EE\left[\eta\left( \bH^{1/2}_{\cS}(\I - \eta \bH_{\cS})^{l - j} \bxi_{\uu}^j \right)\mid \cF^{j-1} \right] = \mathbf{0}, $$
and
	\begin{eqnarray}
	&&\left\|\eta\left( \bH^{1/2}_{\cS}(\I - \eta \bH_{\cS})^{l - j} \bxi_{\uu}^j \right)\right\|^2\notag\\
	&  \leq&  \eta^2\|\bxi_{\uu}^j\|  \left\| (\I - \eta  \bH_{\cS})^{l - j}  \bH_{\cS}  (\I - \eta  \bH_{\cS})^{l - j}\right \|_2\|\bxi_{\uu}^j  \| \overset{\eqref{basic2}~\&~\| \bxi^j_\uu\|\leq \sigma} {\leq}  \frac{\eta\sigma^2}{1+2 (l-j)},
	\end{eqnarray}
 by  the Vector-Martingale Concentration Inequality in \S \ref{proh},   we have with probability $1 - p/(12K_0)$
	\begin{eqnarray}\label{probability2}
	&&\left\| \eta\left(\sum_{j =1}^{l} \cH^{1/2}(\I - \eta \bH_{\cS})^{l- j} \bxi_{\uu}^j \right)   \right\|^2 \leq   4 \eta \sigma^2\log(48K_0/p) \sum_{j= 1}^{l} \frac{1}{1+2(l-j)}   \notag\\
	&\leq& 4\sigma^2 \eta   \log(48K_0/p)\sum_{j= 0}^{K_0-1} \frac{1}{1+j}   \leq 4\sigma^2 \eta \left(\log(K_0)+1\right) \log(48K_0/p).
	\end{eqnarray}
By union bound, with  probability  at least $1-p/12$, \eqref{probability2} holds for all $l$ from $1$ to $K_0$. Because  $1\leq\sK\leq K_0$, with probability  at least $1-p/12$,
	\begin{eqnarray}\label{probability2tt}
&&\left\| \eta\left(\sum_{j =1}^{\sK} \cH^{1/2}(\I - \eta \bH_{\cS})^{\sK- j} \bxi_{\uu}^j \right)   \right\|^2 \leq 4\sigma^2 \eta \left(\log(K_0)+1\right) \log(48K_0/p).
\end{eqnarray}

	For the second term in the right hand side of $\eqref{zend1}$,  we have
	\begin{eqnarray}\label{z7}
	&&\eta^2\frac{\rho^2 B^4}{4}\sum_{j =0}^{\sK-1}\sum_{l =0}^{\sK-1} \left\|  (\I - \eta \bH_{\cS})^{\sK-1- j}\bH(\I - \eta \bH_{\cS})^{\sK-1 - l} \right\|_2 \notag\\
	&\overset{\eqref{basic1}}{\leq}&\eta\frac{\rho^2 B^4}{4} \sum_{j =0}^{\sK-1}\sum_{l =0}^{\sK-1}  \frac{1}{1+ (\sK-1 - j)+(\sK-1 - l) }\overset{\sK \leq K_0}{\leq}\eta\frac{\rho^2 B^4}{4} \sum_{j =0}^{K_0-1}\sum_{l =0}^{K_0-1}  \frac{1}{1+ j +l} \notag\\
	&=&\eta\frac{\rho^2 B^4}{4} \sum_{j=0}^{2(K_0-1)} \frac{\min(1+j, 2K_0-1 - j)}{1+j}\leq \frac{\eta\rho^2 B^4 K_0}{2}.
	\end{eqnarray}
	Substituting \eqref{probability2tt} and \eqref{z7} into \eqref{zend1}, we obtain \eqref{z5}.
\end{proof}

\begin{proof}[Proofs of Lemma \ref{analysis on uu}]
 Let $\y^* = \argmin_{\y} g_{\cS} (\y)$. By the optimal condition of $\y^*$, we have
$$\nabla_{\cS} f\left(\x^{0}\right) =   -\bH_{\cS} \y^*.$$

Define $\ty^k =  \y^{k} - \y^{*}$. From the update rule of $\y^k$ in \eqref{y update}, we have
\begin{eqnarray}
\bH_{\cS}\ty^k  &=& \nabla g_{\cS}\left( \y^k\right),\label{g ty}\\
\ty^{k+1} &=& \ty^k  - \eta \bH_{\cS} \ty^k. \label{update ty}
\end{eqnarray}
 We can bound the first-order difference between $g_{\cS} \left(\uu^{\sK}\right)$ and $g_{\cS} \left(\y^{\sK}\right)$ as:
\begin{eqnarray}\label{z8}
&&\< \nabla g_{\cS} \left(\y^{\sK}\right),  \uu^{\sK} - \y^{\sK} \>\overset{\eqref{g ty}}{=}\<\ty^{\sK}, \z^{\sK} \>_{\bH_{\cS}}\\
&\overset{\eqref{update ty} ~ \eqref{update of z}}{=}& \< (\I -  \eta \bH_{\cS}) \ty^{\sK-1}  , (\I -  \eta \bH_{\cS})\z^{\sK-1}  - \eta \bxi_{\uu}^{\sK} -  \eta  \left(\nabla_{\uu} f\left(\x^{\sK-1}\right) - \nabla g_{\cS}\left(\uu^{\sK-1}\right)\right)  \>_{\bH_{\cS}}\notag\\
&\overset{a}=& \<\ty^{\sK-1}, \z^{\sK-1} \>_{\bH_{\cS} (\I -  \eta \bH_{\cS})^2}  - \eta  \<\ty^{\sK-1},  \bxi_{\uu}^{\sK}  \>_{\bH_{\cS} (\I -  \eta \bH_{\cS})}\notag\\
&& - \eta\< \ty^{\sK-1 }, \nabla_{\uu} f\left(\x^{\sK-1}\right) - \nabla g_{\cS}\left(\uu^{\sK-1}\right) \>_{\bH_{\cS} (\I -  \eta\bH_{\cS})}\notag\\
&\overset{b}=&\!\!\!\!\!\!\!\!\!\!\!\!\!\!\! - \eta  \sum_{k=1}^{\sK }\<\ty^{k-1},  \bxi_{\uu}^{k}  \>_{\bH_{\cS} (\I -  \eta \bH_{\cS})^{\sK - k+1}} - \eta \sum_{k=0}^{\sK-1 }\< \ty^{k}, \nabla_{\uu} f\left(\x^{k}\right) - \nabla g_{\cS}\left(\uu^{k}\right) \>_{\bH_{\cS} (\I -  \eta\bH_{\cS})^{\sK - k}},\notag
\end{eqnarray}
where in $\overset{a} = $, we use $(\I - \bH_{\cS})\bH_{\cS} = \bH_{\cS}(\I - \bH_{\cS})$, and in $\overset{b}=$, we use $\z^{0} = \mathbf{0}$.

We also bound the two terms in the right hand side of \eqref{z8}. For any fixed $l$ from $1$ to $K_0$, and any j from  $1$ to $l$,   we have
\begin{eqnarray}\label{z9}
&&\left|\<\ty^{j-1},  \bxi_{\uu}^{l}  \>_{\bH_{\cS} (\I -  \eta \bH_{\cS})^{l - j+1}}\right|^2=\left|\< \bH_{\cS} \ty^{j-1},  \bxi_{\uu}^{j}  \>_{ (\I -  \eta \bH_{\cS})^{l - j+1}}\right|^2 \notag\\
&\overset{\eqref{g ty}}{=}& \left|\< \nabla  g_{\cS}\left(\y^{j-1}\right),   \bxi_{\uu}^{j} \>_{ (\I -  \eta \bH_{\cS})^{l- j+1}}\right|^2\notag\\
&\overset{a}{\leq}& \sigma^2 \left\|  \nabla  g_{\cS}\left(\y^{j-1}\right)\right\|^2 \left\| (\I -  \eta \bH_{\cS})^{l-j+1}\right\|_2^2 \overset{b}{\leq} \sigma^2 \left\|  \nabla  g_{\cS}\left(\y^{j-1}\right)\right\|^2,
\end{eqnarray}
where $\overset{a}\leq$ uses $\| \bxi^j_{\uu}\| \leq \| \bxi^j\|\leq \sigma $, $\overset{b}\leq$ uses $\|(\I -  \eta \bH_{\cS})^{l - j+1} \|_2 \leq 1$ for all $j$ from $1$ to $l$. So for any $l$ from  $1\leq k \leq K_0$,  by standard Azuma–Hoeffding inequality, using $ \nabla  g_{\cS}\left(\y^k\right)$ is measurable on $\cF^{0}$,  with probability  at least $1-p/(12K_0)$,  we have
\begin{eqnarray}\label{protemp}
\left| \eta  \sum_{k=1}^{l}\<\ty^{k-1},  \bxi_{\uu}^{k}  \>_{\bH_{\cS} (\I -  \eta \bH_{\cS})^{l - k+1}}\right| &\leq&  \sqrt{2\eta^2 \sigma^2\log(24K_0/p) \sum_{k=0}^{l-1}\left\|  \nabla  g_{\cS} \left(\y^k\right)\right\|^2 } .
\end{eqnarray}
By union bound, with  probability  at least $1-p/12$, \eqref{protemp} holds for all $l$ from $1$ to $K_0$.  we have with probability at least $1-p/12$
\begin{eqnarray}\label{probability3}
\left| \eta  \sum_{k=1}^{\sK}\<\ty^{k-1},  \bxi_{\uu}^{k}  \>_{\bH_{\cS} (\I -  \eta \bH_{\cS})^{\sK - k+1}}\right| &\leq&  \sqrt{2\eta^2 \sigma^2\log(24K_0/p) \sum_{k=0}^{\sK-1}\left\|  \nabla  g_{\cS} (\y^k)\right\|^2 } \\
&\overset{a}\leq& \frac{\eta\sum_{k=0}^{\sK-1}\left\|  \nabla  g_{\cS} (\y^k)\right\|^2  }{16} +8\eta \sigma^2 \log(48K_0/p),\notag
\end{eqnarray}
where  in $\overset{a}\leq$, we use $\sqrt{ab}\leq \frac{a+b}{2}$ with $a\geq0$ and $b\geq0$.

For the second term in the right hand side  of  \eqref{z8}, we have
\begin{eqnarray}\label{z10}
&&\eta \sum_{k=0}^{\sK-1 }\< \ty^{k}, \nabla_{\uu} f\left(\x^{k}\right) - \nabla g_{\cS}\left(\uu^{k}\right) \>_{\bH_{\cS} (\I -  \eta \bH_{\cS})^{\sK - k}}\notag\\
&\overset{\eqref{g ty}}{=}&\eta \sum_{k=0}^{\sK-1 }\<   \nabla  g_{\cS} \left(\y^k\right), \nabla_{\uu} f\left(\x^{k}\right) - \nabla g_{\cS}\left(\uu^{k}\right) \>_{ (\I -  \eta \bH_{\cS})^{\sK - k}}\notag\\
&\overset{a}{\leq}&\eta \sum_{k=0}^{\sK-1 }\left\|  \nabla  g_{\cS} \left(\y^k\right)\right\|\cdot\left\| \nabla_{\uu} f\left(\x^{k}\right) - \nabla g_{\cS}\left(\uu^{k}\right) \right\| \notag\\
&\overset{b}{\leq}&  \frac{\eta\sum_{k=0}^{\sK-1}\left\|  \nabla  g_{\cS} \left(\y^k\right)\right\|^2  }{8} + 2\eta  \sum_{k=0}^{\sK-1}\left\|\nabla_{\uu} f\left(\x^{k}\right) - \nabla g_{\cS}\left(\uu^{k}\right) \right\|^2\notag\\
&\overset{\eqref{basic1}}{\leq}& \frac{\eta\sum_{k=0}^{\sK-1}\left\|  \nabla  g_{\cS} \left(\y^k\right)\right\|^2  }{8} + \eta \rho^2 B^4 K_0/2,
\end{eqnarray}
where $\overset{a}\leq$ uses $\left\|(\I -  \eta \bH_{\cS})^{\sK - k}\right\|_2\leq1$ with $k < \sK$, $\overset{b}\leq $ uses $ab\leq \frac{a^2+b^2}{2}$.

Substituting \eqref{probability3} and \eqref{z10} into \eqref{z8}, and using \eqref{z5}, we have
\begin{eqnarray}\label{uu end}
&&g_{\cS}\left(\uu^{\sK}\right) \\
&=&  g_{\cS}\left(\y^{\sK}\right)+ \< \nabla  g_{\cS}\left(\y^{\sK}\right), \uu^{\sK} - \y^{\sK} \> + \frac{1}{2}\left(\uu^{\sK} - \y^{\sK}\right)^\bT \bH \left(\uu^{\sK} - \y^{\sK}\right)\notag\\
&\overset{\eqref{z8}~ \eqref{z5}}\leq&g_{\cS}\left(\y^{\sK}\right)+ \frac{3\eta\sum_{k=0}^{\sK-1}\left\|  \nabla  g_{\cS} (\y^k)\right\|^2  }{16} +4\eta \sigma^2 (\log(K_0) +3) \log(48K_0/p) + \rho^2\eta B^4 K_0.\notag
\end{eqnarray}

Then by adding \eqref{uu end} and \eqref{uu end2}, we have
\begin{eqnarray}\label{uu end3}
&&g_{\cS}\left(\uu^{\sK}\right)  - g_{\cS} \left(\uu^{0}\right)\notag\\
&\leq&-\frac{25\eta\sum_{k=0}^{\sK-1}\left\|  \nabla  g_{\cS} \left(\y^k\right)\right\|^2  }{32} +4\eta \sigma^2 ( 3+\log(K_0)) \log(48K_0/p) + \rho^2\eta B^4 K_0,
\end{eqnarray}
implying Lemma \ref{analysis on uu}.
\end{proof}

We then investigate $g_{\cS\bot}(\cdot)$ and summarize its property as follows:
\begin{lemma}\label{analysis on vv}
	With hyper-parameters set in \eqref{parametersetting}  for  Algorithm \ref{algo:SGD},   we have
	\begin{eqnarray}\label{vv end5}
	g_{\cS\bot}(\vv^{\sK}) &\leq& g_{\cS\bot}(\vv^{0})  -  \sum_{k = 1}^{\sK}  \eta\< \nabla g_{\cS\bot}\left(\vv^{k-1}\right), \bxi_{\vv}^k\>  - \frac{7\eta}{8}\sum_{k = 0}^{\sK-1}  \left\|  \nabla g_{\cS\bot}\left(\x^{k}\right)  \right\|^2 + \frac{\rho^2 B^4 \eta K_0}{2}\notag\\
	&=&g_{\cS\bot}(\vv^{0})  -  \sum_{k = 1}^{\sK}  \eta\< \nabla g_{\cS\bot}\left(\vv^{k-1}\right), \bxi_{\vv}^k\> - \frac{7\eta}{8}\sum_{k = 0}^{\sK-1}  \left\|  \nabla g_{\cS\bot}\left(\x^{k}\right)  \right\|^2  +\tO(\ep^{1.5}).
	\end{eqnarray}
\end{lemma}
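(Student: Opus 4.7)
}
The plan is to perform a one-step descent analysis on $g_{\cS\bot}$ that exploits the crucial fact that $\bH_{\cS\bot}$ is negative semidefinite (this is precisely why the subspace $\cS\bot$ is defined to be the span of eigenvectors with eigenvalue $\leq 0$). First I would Taylor expand the quadratic $g_{\cS\bot}$ exactly at $\vv^k$: since $\nabla^2 g_{\cS\bot}\equiv \bH_{\cS\bot}\preceq \mathbf{0}$, we get
\begin{equation*}
g_{\cS\bot}(\vv^{k+1})
= g_{\cS\bot}(\vv^k)+\langle \nabla g_{\cS\bot}(\vv^k),\,\vv^{k+1}-\vv^k\rangle+\tfrac12(\vv^{k+1}-\vv^k)^{\bT}\bH_{\cS\bot}(\vv^{k+1}-\vv^k)
\le g_{\cS\bot}(\vv^k)+\langle \nabla g_{\cS\bot}(\vv^k),\,\vv^{k+1}-\vv^k\rangle.
\end{equation*}
This deterministic pathwise inequality replaces the $\tfrac{L\eta^2}{2}\|\cdot\|^2$ term that appears in the classical analysis \eqref{tra} and is the reason why in this component the noise does \emph{not} cost us $\cO(\eta^2\sigma^2)$ per step.

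Second, I would substitute the SGD update \eqref{update v}: $\vv^{k+1}-\vv^k=-\eta\nabla_{\vv}f(\x^k)-\eta\,\bxi_{\vv}^{k+1}$, and then split the deterministic part by writing $\nabla_{\vv}f(\x^k)=\nabla g_{\cS\bot}(\vv^k)+(\nabla_{\vv}f(\x^k)-\nabla g_{\cS\bot}(\vv^k))$. The first piece produces the desired $-\eta\|\nabla g_{\cS\bot}(\vv^k)\|^2$ term, while the bias term is controlled by \eqref{basic1} combined with the fact that $\cP_{\cS\bot}$ is a projection:
\begin{equation*}
\|\nabla_{\vv}f(\x^k)-\nabla g_{\cS\bot}(\vv^k)\|=\|\cP_{\cS\bot}(\nabla f(\x^k)-\nabla g(\x^k))\|\le \tfrac{\rho B^2}{2},
\end{equation*}
which is valid whenever $k\le \sK-1$, since then $\|\x^k-\x^0\|\le B$ by the definition \eqref{sK0} of $\sK_0$ and $\sK=\sK_0\wedge K_0$. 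A single application of Young's inequality $|ab|\le\tfrac{a^2}{8}+2b^2$ to the cross term lets me keep $-\tfrac78\eta\|\nabla g_{\cS\bot}(\vv^k)\|^2$ and absorb the bias into $2\eta\cdot(\rho B^2/2)^2=\tfrac{\rho^2B^4\eta}{2}$.

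Third, I would telescope the resulting per-step inequality from $k=0$ to $k=\sK-1$, keeping the noise inner product $-\eta\langle\nabla g_{\cS\bot}(\vv^{k-1}),\bxi_{\vv}^{k}\rangle$ explicit (rather than trying to bound it, as it appears unchanged in the claimed statement), and bound $\sK\le K_0$ on the bias accumulation term to obtain the stated $\tfrac{\rho^2B^4\eta K_0}{2}$.

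The main obstacle here is conceptually mild — the whole argument is deterministic pathwise and does not invoke any of the vector-Azuma machinery from Lemma \ref{lemm:psitexpress} or Lemma \ref{zz} — but one should be careful about two things: (i) the sum runs up to a \emph{stopping time} $\sK$, which is harmless because the inequality holds pointwise for every $k$ and the noise term is not discarded but carried along; (ii) one must verify the bound $\|\x^k-\x^0\|\le B$ used in \eqref{basic1} for every index in the sum, which is guaranteed by the very definition $\sK\le\sK_0$ together with the ball-controlled stopping rule of Algorithm \ref{algo:SGD}. In contrast to the $g_{\cS}$ analysis, no auxiliary deterministic trajectory $\y^k$ is needed, because concavity of $g_{\cS\bot}$ already supplies the descent direction for free.
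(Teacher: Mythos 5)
Your proposal is correct and follows essentially the same route as the paper: exact second-order expansion of the quadratic $g_{\cS\bot}$, dropping the Hessian term via $\bH_{\cS\bot}\preceq\mathbf{0}$, substituting \eqref{update v}, splitting $\nabla_{\vv}f(\x^k)$ into $\nabla g_{\cS\bot}(\vv^k)$ plus a bias controlled by the projection property and \eqref{basic1}, applying Young's inequality with the $\tfrac18/2$ split, and telescoping up to the stopping time $\sK\le K_0$ while carrying the noise inner product. Your observation that the inequality is deterministic and pathwise, requiring no concentration inequality, is exactly the point — it matches the paper's unconditional statement of the lemma.
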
 

\begin{proof}[Proofs of Lemma \ref{analysis on vv}] 
Lemma \ref{analysis on vv} can be obtained via the standard analysis. Specifically, from the definition of $g_{\cS\bot}(\cdot)$, we have
 \begin{eqnarray}\label{gcs}
 g_{\cS\bot}\left(\vv^{k+1}\right) &=&  g_{\cS\bot}\left(\vv^{k}\right) + \< \nabla g_{\cS\bot}\left(\vv^{k}\right), \vv^{k+1} - \vv^k  \> +  \left[\vv^{k+1} - \vv^{k}\right]^{\bT}\frac{\bH_{\cS\bot}}{2}\left[\vv^{k+1}-\vv^{k}\right]\notag\\
 &\overset{\bH_{\cS\bot}\preceq \mathbf{0}}{\leq}& g_{\cS\bot}\left(\vv^{k}\right) + \< \nabla g_{\cS\bot}\left(\vv^{k}\right) , \vv^{k+1} - \vv^k  \>\notag\\
 &\overset{\eqref{update v}}{=}& g_{\cS\bot}\left(\vv^k\right) -\eta \< \nabla g_{\cS\bot}\left(\vv^{k}\right),   \nabla_{\vv} f\left(\x^{k}\right) + \bxi_{\vv}^{k+1}  \>.
 \end{eqnarray}

 We can further bound the right hand side  of \eqref{gcs} as follows:
 \begin{eqnarray}\label{temp2}
 &&- \< \eta \nabla g_{\cS\bot}\left(\vv^{k}\right),  \nabla_{\vv} f\left(\x^{k}\right)\> \notag\\
 &=& - \eta \left\| \nabla g_{\cS\bot}\left(\vv^{k}\right)  \right\|^2 -  \< \eta \nabla g_{\cS\bot}\left(\vv^{k}\right),  \nabla_{\vv} f(\x^{k})  - \nabla g_{\cS\bot}\left(\vv^{k}\right) \>\notag\\
 &\leq&-   \frac{7\eta}{8}  \left\|\nabla g_{\cS\bot}\left(\vv^{k}\right) \right\|^2 +  2\eta\left\| \nabla_{\vv} f(\x^{k})  - \nabla g_{\cS\bot}\left(\vv^{k}\right) \right\|^2,
 \end{eqnarray}
 where in the last inequality, we apply:
 \begin{eqnarray}
  \< \eta\nabla g_{\cS\bot}\left(\vv^{k}\right),  \nabla g_{\cS\bot}\left(\vv^{k}\right)  -   \nabla_{\vv} f\left(\x^{k}\right)\>\leq \frac{\eta \left\| \nabla g_{\cS\bot}\left(\vv^{k}\right)\right\|^2}{8} + 2\eta \left\|\nabla g_{\cS\bot}\left(\vv^{k}\right) - \nabla_{\vv} f\left(\x^k\right) \right\|^2.
 \end{eqnarray}
Substituting \eqref{temp2} into \eqref{gcs},  and telescoping the results with $k$ from $0$ to $\sK-1$,  we have
 \begin{eqnarray}\label{g_vv end}
 &&g_{\cS\bot}\left(\vv^{\sK}\right) \\
  &\leq&g_{\cS\bot}\left(\vv^{0}\right)  -  \sum_{k = 1}^{\sK}  \eta\< \nabla g_{\cS\bot}\left(\vv^{k-1}\right),  \bxi_{\vv}^k\>  - \frac{7\eta}{8}\sum_{k = 0}^{\sK-1}  \left\|  \nabla g_{\cS\bot}\left(\x^{k}\right)  \right\|^2+2\eta\sum_{k = 0}^{\sK-1}\left\| \nabla_{\vv} f\left(\x^{k}\right)  - \nabla g_{\cS\bot}\left(\vv^{k}\right) \right\|^2\notag\\
 &\overset{a}{\leq}&g_{\cS\bot}\left(\vv^{0}\right)  -  \sum_{k = 1}^{\sK}  \eta\< \nabla g_{\cS\bot}\left(\vv^{k-1}\right),  \bxi_{\vv}^k\>  - \frac{7\eta}{8}\sum_{k = 0}^{\sK-1}  \left\|  \nabla g_{\cS\bot}\left(\x^{k}\right)  \right\|^2 + \frac{\rho^2 B^4 \eta K_0}{2},\notag
 \end{eqnarray}
 where in $\overset{a}\leq$, we use $$ \|\nabla_{\vv} f\left(\x^{k}\right)  - \nabla g_{\cS\bot}\left(\vv^{k}\right)\|  = \left\|\cP_{\cS\bot} \left(\nabla f\left(\x^{k}\right) -\nabla g\left(\x^{k}\right) \right)\right\|\leq \left\|\nabla f\left(\x^{k}\right) -\nabla g\left(\x^{k}\right) \right\|  \overset{\eqref{basic1}}{\leq} \rho B^2/2,$$
 holds for all $k\leq \sK-1$.

\end{proof}

\subsection{Proofs of Proposition \ref{faster descent}}
With Lemma \ref{analysis on uu} and \ref{analysis on vv} in hand,  the mainly rest to do is  to prove  $$\sum_{k = 0}^{\sK_0-1}  \left\| \nabla g_{\cS\bot}\left(\vv^k\right)  \right\|^2+\sum_{k=0}^{\sK_0-1}\left\|  \nabla  g_{\cS} \left(\y^k\right)\right\|^2 =  \tilde{\Omega}(1)$$ and bound the noise term $ -  \sum_{k = 1}^{\sK}  \eta\< \nabla g_{\cS\bot}\left(\vv^{k-1}\right),  \bxi_{\vv}^k\> $.  	We separately consider   two cases:
\begin{enumerate}
	\item $\left\|\nabla f\left(\x^{0}\right)\right\|> 5 \sigma\asymp 1$,\label{case11}
	\item $\left\|\nabla f\left(\x^{0}\right)\right\|\leq 5 \sigma\asymp 1$. \label{case12}
\end{enumerate}

\begin{enumerate}[(i)]
\item \noindent\textbf{Case \ref{case11}:}   in the sense that the gradient is large,  we show that function value is guaranteed to decrease monotonously.  

\begin{proof}[Proofs of Proposition \ref{faster descent} in Case \ref{case11}]
Because $\left\|\nabla f\left(\x^{0}\right)\right\|> 5 \sigma$,  we have, for all $0\leq k \leq \sK-1$,
\begin{eqnarray}\label{boundna}
\left\|\nabla f\left(\x^{k}\right) \right\|\geq \left\| \nabla f\left(\x^{0}\right) \right\| - \left\|\nabla f\left(\x^{k}\right)-  \nabla f\left(\x^{0}\right) \right\|\overset{a}{\geq} 5\sigma - L B \overset{LB\leq \sigma}{\geq} \frac{9}{2} \sigma,
\end{eqnarray}
where  $\overset{a}\geq$ uses $\| \x^k - \x^0\|\leq B$ for all $k \leq \sK_1$, the $L$-Lipschitz continuous of the gradient. 
Furthermore,  we also  have
\begin{eqnarray}\label{case1all}
&&f\left(\x^{k+1}\right)-f\left(\x^{k}\right) \notag\\
&\leq& \< \nabla f\left(\x^{k}\right),\x^{k+1} - \x^{k}  \> + \frac{L}{2}\| \x^{k+1} - \x^{k}\|^2\notag\\
&\overset{a}{=}& -\eta\< \nabla f\left(\x^{k}\right),\nabla f(\x^k) +\bxi^{k+1} \> + \frac{L\eta^2}{2}\| \nabla f(\x^k) +\bxi^{k+1}\|^2\notag\\
&\overset{b}{\leq}&  -\eta \left\| \nabla f\left(\x^{k}\right) \right\|^2 - \eta  \< \nabla f\left(\x^{k}\right) , \bxi^{k+1}\> + L\eta^2\left\| \nabla f\left(\x^{k}\right) \right\|^2 + L \eta^2\left\|\bxi^{k+1} \right\|^2\notag\\
&\overset{c}{\leq}& -\frac{15\eta}{16}\left\| \nabla f\left(\x^{k}\right) \right\|^2 +\frac{5\eta}{32}\left\| \nabla f\left(\x^{k}\right) \right\|^2 + \frac{8}{5} \eta \sigma^2 + L\eta^2 \sigma^2\notag\\
&\leq& -\frac{25\eta}{32}\left\| \nabla f\left(\x^{k}\right) \right\|^2 +2\eta\sigma^2\overset{\eqref{boundna}}{\leq} - \eta\left(\frac{25}{32} - \frac{8}{81}\right)\left\| \nabla f\left(\x^{k}\right) \right\|^2,
\end{eqnarray}
where $\overset{a}=$ uses the update rule of SGD: $\x^{k+1} =\x^k -\eta \nabla f\left(\x^k\right) -\eta \bxi^{k+1}$, $\overset{b}\leq$ uses $\| \a +\b\|^2 \leq 2\| \a\|^2+ 2\| \b\|^2$,
 in $\overset{c}\leq$, we use $L\eta\leq \frac{1}{16}$ from \eqref{parametersetting}, $-\< \nabla f\left(\x^{k}\right) , \bxi^{k+1}\> \leq \frac{5}{32} \left\| \nabla f\left(\x^{k}\right) \right\|^2+ \frac{8}{5}\left\|\bxi^{k+1} \right\|^2$, and $\left\|\bxi^{k+1}\right\|\leq \sigma$. By telescoping \eqref{case1all} with $k$ from $0$ to $\sK-1$, we have
\begin{eqnarray}\label{D43}
&&f\left(\x^{\cK}\right)-f\left(\x^{0}\right)  \leq  - \eta\left(\frac{25}{32} - \frac{8}{81}\right) \sum_{k=0}^{\sK-1}\left\| \nabla f\left(\x^{k}\right) \right\|^2.
\end{eqnarray}
On the other hand, again by the update rule of SGD, we have
\begin{eqnarray}\label{D44}
&&\left\| \eta \sum_{k=0}^{\sK-1} \nabla f\left(\x^{k}\right)  \right\|=\left\| -\eta \sum_{k=0}^{\sK-1} \nabla f\left(\x^{k}\right)  \right\|\notag\\
&=& \left\| \x^{\sK} - \x^{0} + \eta \sum_{k=1}^{\sK}\bxi^k  \right\|\notag\\
&\geq& \left\| \x^{\sK} - \x^{0}\right\| - \left\| \eta \sum_{k=1}^{\sK}\bxi^k  \right\|.
\end{eqnarray}
 By the Vector-Martingale Concentration Inequality in Theorem \ref{proh}, we have with probability $1 - p/12$,  
\begin{eqnarray}\label{tempp}
\left\| \sum_{k=1}^{\sK} \bxi^k  \right\|\leq \left\| \sum_{k=1}^{K_0} \bxi^k \cdot \cI_{\sK\geq k}  \right\| \overset{a}{\leq} 2\eta \sigma\sqrt{ K_0 \log(48/p)} \leq \frac{B}{16},
\end{eqnarray} 
where $\overset{a}\leq$ uses $ \cI_{ k\leq \sK}$ is measurable on $\cF^{k-1}$ and $\|\bxi^k\|\leq \sigma$.
So if \eqref{tempp} happens,  and  $\x^k$ exits $\cB\left(\x^{0},B\right)$ in $K_0$ iterations, we have
\begin{eqnarray}\label{D46}
&&\eta \sum_{k=0}^{\sK-1}\left\| \nabla f\left(\x^{k}\right) \right\|^2 \notag\\ 
&\overset{a}{\geq}&  \frac{1}{ \eta \sK}\left\| \eta\sum_{k=0}^{\sK-1}  \nabla f\left(\x^{k}\right) \right\|^2 \overset{\eqref{D44}}{\geq}    \frac{1}{ \eta\sK} \left(B - \frac{1}{16}B\right)^2\notag\\
&\geq&\frac{15^2B^2}{16^2 \eta\sK }\overset{\sK \leq K_0}{\geq} \frac{15^2B^2}{16^2\eta K_0},
\end{eqnarray}
where in $\overset{a}\geq$, we use the inequality  that 
$$ \left\|\sum_{i=1}^l \a_i \right\|^2 \leq l \sum_{i=1}^l \left\|\a_i\right\|^2,  $$
holds for all $l\geq 1$.
Plugging \eqref{D46} into \eqref{D43},  with probability at least $1- p/12$ (\eqref{tempp} happens), we have
\begin{eqnarray}
f\left(\x^{\sK_0}\right) \leq  f\left(\x^{0}\right)  - \left(\frac{25}{32} - \frac{8}{81}\right) \frac{15^2B^2}{16^2\eta K_0} \leq  f\left(\x^{0}\right)  -\frac{B^2}{7\eta K_0}.
\end{eqnarray} 
\end{proof}

\noindent\textbf{Case \ref{case12}:}  To obtain the result, we first prepare the following lemmas:
\item We  fuse Lemma \ref{analysis on uu} and \ref{analysis on vv} and obtain the lemma shown below:
\begin{lemma}\label{sufficient descent1}
	With the parameters set in \eqref{parametersetting}, and if $\|\nabla f(\x^0)\|\leq 5\sigma$,  with probability $1-p/4$ (\eqref{probability1}, \eqref{probability2} and \eqref{probability3} happen), we have
\begin{eqnarray}\label{end2}
f\left(\x^{\sK}\right)&\leq& f\left(\x^{0}\right)  -  \eta\sum_{k = 1}^{\sK}  \<  \nabla g_{\cS\bot}\left(\vv^{k-1}\right),  \bxi_{\vv}^k\> + \left(\frac{3}{256}+ \frac{1}{80}\right) \frac{B^2}{\eta K_0}\\
&& - \frac{7\eta}{8}  \sum_{k = 0}^{\sK-1}  \left\| \nabla g_{\cS\bot}\left(\vv^{k}\right)  \right\|^2 -\frac{25\eta}{32}\sum_{k=0}^{\sK-1}\left\|  \nabla  g_{\cS} \left(\y^k\right)\right\|^2  . \notag
\end{eqnarray}
\end{lemma}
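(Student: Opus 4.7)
The plan is to combine Lemma~\ref{analysis on uu} and Lemma~\ref{analysis on vv} through a third-order Taylor expansion of $f$ around $\x^0$. First, I would invoke the Hessian-Lipschitz condition \eqref{HessSmooth} to write
\[
f(\x^{\sK}) \le f(\x^0) + g(\x^{\sK}-\x^0) + \frac{\rho}{6}\|\x^{\sK}-\x^0\|^3 \le f(\x^0) + g(\x^{\sK}-\x^0) + \frac{\rho B^3}{6},
\]
where the last inequality uses $\sK \le \sK_0$ so that $\|\x^{\sK}-\x^0\|\le B$ by the definition of the exit stopping time. The orthogonal decomposition set up in Section~\ref{quadratic} then rewrites $g(\x^{\sK}-\x^0) = g_{\cS}(\uu^{\sK}) + g_{\cS\bot}(\vv^{\sK})$, and since $\uu^0=\vv^0=\mathbf{0}$ one has $g_{\cS}(\uu^0)=g_{\cS\bot}(\vv^0)=0$, so the Taylor step reduces the problem to bounding $g_{\cS}(\uu^{\sK})-g_{\cS}(\uu^0)$ and $g_{\cS\bot}(\vv^{\sK})-g_{\cS\bot}(\vv^0)$ separately.

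Next, I would apply Lemma~\ref{analysis on uu}, whose conclusion holds on the intersection of the three high-probability events from its proof (namely \eqref{probability1tt}, \eqref{probability2tt}, and \eqref{probability3}), each of probability at least $1-p/12$, so by a union bound simultaneously with probability at least $1-p/4$. This produces the descent term $-\tfrac{25\eta}{32}\sum_{k=0}^{\sK-1}\|\nabla g_{\cS}(\y^k)\|^2$ plus the positive residuals $4\eta\sigma^2(\log(K_0)+3)\log(48K_0/p)+\eta\rho^2 B^4 K_0$. The deterministic Lemma~\ref{analysis on vv} simultaneously supplies the martingale term $-\eta\sum_{k=1}^{\sK}\langle\nabla g_{\cS\bot}(\vv^{k-1}),\bxi_{\vv}^k\rangle$, the descent term $-\tfrac{7\eta}{8}\sum_{k=0}^{\sK-1}\|\nabla g_{\cS\bot}(\vv^k)\|^2$, and the residual $\tfrac{\rho^2 B^4\eta K_0}{2}$. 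Adding the two bounds to the Taylor remainder $\rho B^3/6$ then yields the structural form of \eqref{end2}.

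The remaining step—the only bookkeeping of any subtlety—is to verify that the four accumulated positive residual terms fit inside $\bigl(\tfrac{3}{256}+\tfrac{1}{80}\bigr)\tfrac{B^2}{\eta K_0}$. From \eqref{parametersetting} one has $\eta K_0=\tC_1/\delta_2$, hence $\tfrac{B^2}{\eta K_0}=\tfrac{B^2\delta_2}{\tC_1}$, and the step-size upper bound is engineered precisely so that $4\eta\sigma^2(\log(K_0)+3)\log(48K_0/p)\le \tfrac{B^2\delta}{16\tC_1}=\tfrac{1}{256}\cdot\tfrac{B^2}{\eta K_0}$. The identities $\rho B=\delta/\tC_1$ and $\delta_2=16\delta$ reduce $\eta\rho^2 B^4 K_0$ and $\eta\rho^2 B^4 K_0/2$ to $\tfrac{1}{256}$ and $\tfrac{1}{512}$ of $\tfrac{B^2}{\eta K_0}$ respectively, while the Taylor remainder $\rho B^3/6=\tfrac{B^2\delta}{6\tC_1}$ absorbs into the $\tfrac{1}{80}$ slot via the same substitutions. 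Summing gives exactly the claimed coefficient.

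The main obstacle is not conceptual but arithmetical: aligning the numerical constants from three separate residual bounds with the target fractions $\tfrac{3}{256}$ and $\tfrac{1}{80}$ requires that every factor (the $4$ in front of the noise-variance term, the $\tfrac{1}{2}$ in the Hessian-Lipschitz Taylor remainder, the $\tfrac12$ inside Lemma~\ref{analysis on vv}) line up against the specific choices of $B$, $\eta$, $\delta$, $\delta_2$, and $K_0$ in \eqref{parametersetting}. I note finally that the hypothesis $\|\nabla f(\x^0)\|\le 5\sigma$ plays no direct role inside this lemma; it is inherited from Case~\ref{case12} of Proposition~\ref{faster descent} and will only be invoked later, when lower-bounding the gradient-norm sums $\sum\|\nabla g_{\cS}(\y^k)\|^2+\sum\|\nabla g_{\cS\bot}(\vv^k)\|^2$ to convert \eqref{end2} into the faster-descent conclusion.
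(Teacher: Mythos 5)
Your overall architecture is right---combine Lemma~\ref{analysis on uu}, Lemma~\ref{analysis on vv}, and a cubic Taylor remainder, then check that the residual constants fit inside $\bigl(\tfrac{3}{256}+\tfrac{1}{80}\bigr)\tfrac{B^2}{\eta K_0}$. But there is one genuine error, and it is exactly the point you dismiss at the end.

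You claim ``$\|\x^{\sK}-\x^0\|\le B$ by the definition of the exit stopping time,'' and you conclude that the hypothesis $\|\nabla f(\x^0)\|\le 5\sigma$ ``plays no direct role inside this lemma.'' Both are wrong, for the same reason. The stopping time is defined as $\sK_0=\inf\{k:\|\x^k-\x^0\|>\radius\}$, so when $\sK=\sK_0$ (the case of interest in Proposition~\ref{faster descent}) the iterate at time $\sK$ has just \emph{left} the ball: $\|\x^{\sK}-\x^0\|>B$. The pre-exit bound $\|\x^{k}-\x^0\|\le B$ only holds for $k\le\sK-1$. To control the overshoot of the final step one writes
\begin{equation*}
\|\x^{\sK}-\x^0\|\le\|\x^{\sK-1}-\x^0\|+\eta\bigl\|\nabla f(\x^{\sK-1})+\bxi^{\sK}\bigr\|\le B+\eta\bigl(\|\nabla f(\x^{\sK-1})\|+\sigma\bigr),
\end{equation*}
and it is precisely here that the hypothesis $\|\nabla f(\x^0)\|\le 5\sigma$ is used: combined with $L$-Lipschitzness and $LB\le\tfrac12\sigma$ it gives $\|\nabla f(\x^{\sK-1})\|\le\tfrac{11}{2}\sigma$, hence $\|\x^{\sK}-\x^0\|\le B+\tfrac{13}{2}\eta\sigma\le B(1+\tfrac{1}{100})$ by the step-size choice in \eqref{parametersetting}. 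Without the $5\sigma$ hypothesis the final step could overshoot by $\eta(\|\nabla f(\x^0)\|+LB+\sigma)$, an amount that is not controlled by $B$, and the Taylor remainder $\tfrac{\rho}{6}\|\x^{\sK}-\x^0\|^3$ would not be bounded by anything close to $\tfrac{\rho B^3}{5}$. This is also why the paper splits Proposition~\ref{faster descent} into the two cases $\|\nabla f(\x^0)\|> 5\sigma$ and $\|\nabla f(\x^0)\|\le 5\sigma$: the large-gradient case is handled by an entirely separate argument because the quadratic-approximation route breaks down there. Once you replace your false claim with the correct overshoot bound $\|\x^{\sK}-\x^0\|\le B(1+1/100)$, the rest of your constant accounting goes through (and is in fact slightly tighter than the paper's in the $\rho^2 B^4$ terms).
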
 

\begin{proof}[Proofs of Lemma \ref{sufficient descent1}]
 Because  $\left\|\nabla g_{\cS\bot}\left(\vv^{0}\right)\right\|=\left\|\nabla_\vv f\left(\x^{0}\right)\right\|\leq \left\|\nabla f\left(\x^{0}\right)\right\|\leq 5 \sigma$,  for all $0\leq k \leq \sK-1$, we have
\begin{eqnarray}\label{boundnabla}
\left\|\nabla g_{\cS\bot}\left(\vv^{k}\right) \right\|\leq \left\| \nabla g_{\cS\bot}\left(\vv^{0}\right) \right\| + \left\|\nabla g_{\cS\bot}\left(\vv^{k}\right)- \nabla g_{\cS\bot}\left(\vv^{0}\right) \right\|\overset{a}{\leq} 5\sigma + L B \overset{LB \leq \frac{1}{2}}{\leq} \frac{11}{2} \sigma,
\end{eqnarray}
where in $\overset{a}\leq$, we use $\left\|\vv^k - \vv^0 \right\| = \left\|\cP_{\cS\bot}(\x^k - \x^0)\right\|\leq B$ and $L$-Lipschitz continuous  gradient for $g_{\cS\bot}(\cdot)$. 
In the same way,  for all $0\leq k \leq \sK-1$,  we  have
\begin{eqnarray}
\left\|\nabla f\left(\x^{k}\right) \right\|\leq \frac{11\sigma}{2},
\end{eqnarray}
which indicates that
\begin{eqnarray}\label{xbound}
\left\|\x^{\sK} - \x^{0} \right\| \leq \left\| \x^{0} -\x^{\sK-1} \right\|+ \left\|\nabla f\left(\x^{\sK-1}\right) + \bxi^{\sK}\right\| \leq B + \frac{13}{2}\eta \sigma \overset{\eqref{parametersetting}}{\leq} B+\frac{B}{100}.  
\end{eqnarray}

We  then bound  the difference between $f(\x^{\sK})$ and $g(\x^{\sK})$: 
 using  $\rho$-smoothness of Hessian,  we have
\begin{eqnarray}\label{tt}
f\left(\x^{\sK}\right) - f\left(\x^{0}\right) - g_{\cS}\left(\uu^{\sK}\right) - g_{\cS\bot}\left(\vv^{\sK}\right)     \leq \frac{\rho}{6}\|\x^{\sK} - \x^{0} \|^3 \overset{\eqref{xbound}}{\leq}  \frac{\rho B^3}{5},
\end{eqnarray}
Then by adding \eqref{uu end3} and \eqref{g_vv end},  using \eqref{tt}, and $g_{\cS}\left(\uu^{0}\right) + g_{\cS\bot}\left(\vv^{0}\right)=0$, we have, with probability at least $1-p/4$, (\eqref{probability1}, \eqref{probability2} and \eqref{probability3} happen)
\begin{eqnarray}\label{end22}
&&f\left(\x^{\sK}\right) \\
&\leq& f\left(\x^{0}\right)  -   \eta\sum_{k = 1}^{\sK} \<  \nabla g_{\cS\bot}\left(\vv^{k-1}\right),  \bxi_{\vv}^k\> + 4\eta \sigma^2 \left( 1+3\log(K_0)\right) \log(48/p)\notag\\
&& - \frac{7\eta}{8} \sum_{k = 0}^{\sK-1}  \left\| \nabla g_{\cS\bot}\left(\vv^{k}\right)  \right\|^2 -\frac{25\eta  }{32} \sum_{k=0}^{\sK-1}\left\|  \nabla  g_{\cS} \left(\y^k\right)\right\|^2 + \frac{3\rho^2 B^4 \eta K_0}{2} +\frac{\rho B^3}{5}. \notag
\end{eqnarray}

With the parameter set in \eqref{parametersetting},
\begin{eqnarray}
4\eta \sigma^2 \left( 1+3\log(K_0)\right) \log(48/p) \leq  \frac{B^2}{256\eta K_0},
\end{eqnarray}
\begin{eqnarray}
\frac{3\rho^2 B^4 \eta K_0}{2}  \leq  \frac{B^2}{128\eta K_0},
\end{eqnarray}
and
\begin{eqnarray}
\frac{\rho B^3}{5} \leq  \frac{B^2}{80\eta K_0},
\end{eqnarray}
 we obtain  with probability at least $1-p/4$, (\eqref{probability1}, \eqref{probability2} and \eqref{probability3} happen)
\begin{eqnarray}
f\left(\x^{\sK}\right)&\leq& f\left(\x^{0}\right)  -  \eta\sum_{k = 1}^{\sK}  \<  \nabla g_{\cS\bot}\left(\vv^{k-1}\right),  \bxi_{\vv}^k\> + \left(\frac{3}{256}+ \frac{1}{80}\right) \frac{B^2}{\eta K_0}\notag\\
&& - \frac{7\eta}{8}  \sum_{k = 0}^{\sK-1}  \left\| \nabla g_{\cS\bot}\left(\vv^{k}\right)  \right\|^2 -\frac{25\eta\sum_{k=0}^{\sK-1}\left\|  \nabla  g_{\cS} \left(\y^k\right)\right\|^2  }{32}, \notag
\end{eqnarray}
implying \eqref{end2}.
\end{proof}

\item Furthermore, the following lemma  ensures the function value sufficient descent:
\begin{lemma}\label{sk2 lemma}
With probability $1-\frac{p}{6}$ (\eqref{probability1} and \eqref{probability4} happen), if    $\x^k$ exits $\cB\left(\x^{\cK},B\right)$ in $K_0$ iterations, we have  
\begin{eqnarray}\label{2endim}
&&\eta \sum_{k = 0}^{\sK-1}  \left\| \nabla g_{\cS\bot}\left(\vv^{k}\right)  \right\|^2+\eta\sum_{k=0}^{\sK-1}\left\|  \nabla  g_{\cS} \left(\y^k\right)\right\|^2\geq  \frac{169B^2}{512\eta K_0}.
\end{eqnarray}
\end{lemma}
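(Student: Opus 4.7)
The plan is to observe that each of the two sums in the claim equals (up to small, controlled errors) the squared norm of a terminal iterate, so that Cauchy--Schwarz $\sum_{k=0}^{\sK-1}\|\a_k\|^2\ge\|\sum\a_k\|^2/\sK$ converts the claim into a lower bound on $\|\y^{\sK}\|^2+\|\vv^{\sK}\|^2$, which the exit condition forces to be large.

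First I would exploit the explicit identities coming from the update rules. The deterministic recursion \eqref{y update} gives $\y^{\sK}=-\eta\sum_{k=0}^{\sK-1}\nabla g_{\cS}(\y^k)$, whereas unrolling \eqref{update v} and splitting $\nabla_{\vv} f(\x^k)=\nabla g_{\cS\bot}(\vv^k)+[\nabla_{\vv} f(\x^k)-\nabla g_{\cS\bot}(\vv^k)]$ yields
\[
-\vv^{\sK}=\eta\sum_{k=0}^{\sK-1}\nabla g_{\cS\bot}(\vv^k)+\eta\sum_{k=0}^{\sK-1}\bigl[\nabla_{\vv} f(\x^k)-\nabla g_{\cS\bot}(\vv^k)\bigr]+\eta\sum_{k=1}^{\sK}\bxi_{\vv}^k.
\]
The Hessian-Lipschitz bound \eqref{basic1} controls the middle term by $\eta\sK\rho B^2/2\le \eta K_0\rho B^2/2=B/32$ (the last equality is exactly the calibration of $B$, $K_0$, $\eta$ in \eqref{parametersetting}), and a Vector-Martingale Azuma bound (Theorem \ref{proh}) applied to $\{\bxi_{\vv}^k\}$ supplies an event \eqref{probability4} of probability $\ge 1-p/12$ on which $\|\eta\sum_{k=1}^{\sK}\bxi_{\vv}^k\|\le B/16$. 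Combining gives $\|\eta\sum_{k=0}^{\sK-1}\nabla g_{\cS\bot}(\vv^k)\|\ge\|\vv^{\sK}\|-3B/32$.

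Next I would translate the exit condition $\|\x^{\sK}-\x^0\|>B$ into a joint lower bound on $\|\y^{\sK}\|^2+\|\vv^{\sK}\|^2$ using the orthogonality of $\cS$ and $\cS\bot$ together with Lemma \ref{zz}. Pythagoras gives $\|\y^{\sK}+\vv^{\sK}\|^2=\|\y^{\sK}\|^2+\|\vv^{\sK}\|^2$; writing $\y^{\sK}+\vv^{\sK}=(\uu^{\sK}+\vv^{\sK})-(\uu^{\sK}-\y^{\sK})$, using $\|\uu^{\sK}-\y^{\sK}\|\le 3B/32$ from Lemma \ref{zz} and the exit condition $\|\uu^{\sK}+\vv^{\sK}\|>B$ yields $\|\y^{\sK}\|^2+\|\vv^{\sK}\|^2\ge(29B/32)^2$. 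Applying Cauchy--Schwarz to each of the two sums and combining the above,
\[
\eta\sum_{k=0}^{\sK-1}\|\nabla g_{\cS}(\y^k)\|^2+\eta\sum_{k=0}^{\sK-1}\|\nabla g_{\cS\bot}(\vv^k)\|^2 \;\ge\; \frac{\|\y^{\sK}\|^2+(\|\vv^{\sK}\|-3B/32)^2}{\eta\sK}.
\]
Invoking \eqref{xbound} to bound $\|\vv^{\sK}\|\le 101B/100$ and expanding the square, the numerator is at least $(29B/32)^2-2(3B/32)(101B/100)+(3B/32)^2\approx 0.641\,B^2$, which comfortably exceeds $169B^2/512\approx 0.330\,B^2$; since $\sK\le K_0$, dividing by $\eta\sK$ produces the required $169B^2/(512\eta K_0)$.

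The main obstacle will be the sharp bookkeeping of the three constant-order errors—the per-step Hessian-approximation error $\rho B^2/2$, the martingale-noise contribution $\|\eta\sum\bxi_{\vv}^k\|$, and the Lemma~\ref{zz} discrepancy $\|\uu^{\sK}-\y^{\sK}\|$—each of which is $\Theta(B)$ relative to a terminal quantity of size $\Theta(B)$. The parameter setting \eqref{parametersetting} is calibrated precisely so that the total error is at most $3B/32$ against the exit bound $\|\uu^{\sK}+\vv^{\sK}\|>B$, leaving the constant $169/512$ intact after the final $(a-b)^2$ expansion.
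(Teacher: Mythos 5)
Your proposal reaches the correct conclusion, and the overall skeleton — unroll the two update rules, control the $\z^{\sK}$ discrepancy via Lemma~\ref{zz}, control the martingale via Theorem~\ref{proh}, control the Hessian-approximation error via \eqref{basic1}, then Cauchy--Schwarz against the exit event — is the same as the paper's. The difference is where you apply Cauchy--Schwarz. The paper forms the single combined sum $\eta\sum_k(\nabla g_{\cS\bot}(\vv^k)+\nabla g_{\cS}(\y^k))$, identifies it (up to the three error terms) with $-(\x^{\sK}-\x^0)$, applies $\|\sum_{i=1}^{2\sK}\a_i\|^2\le 2\sK\sum_i\|\a_i\|^2$ once, and needs nothing beyond the exit lower bound $\|\x^{\sK}-\x^0\|>B$. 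You instead apply Cauchy--Schwarz to the $\cS$ and $\cS^\bot$ sums separately (gaining a factor of two, $1/\sK$ instead of $1/(2\sK)$) and recombine through Pythagoras $\|\y^{\sK}+\vv^{\sK}\|^2=\|\y^{\sK}\|^2+\|\vv^{\sK}\|^2$; this is why your final constant $\approx 0.64 B^2$ beats the paper's $169/512\approx 0.33B^2$. The cost of your route is that the cross term in $(\|\vv^{\sK}\|-3B/32)^2$ requires an \emph{upper} bound on $\|\vv^{\sK}\|$, so you must invoke \eqref{xbound}; that bound is proved under the Case-2 hypothesis $\|\nabla f(\x^0)\|\le 5\sigma$, whereas the paper's proof of Lemma~\ref{sk2 lemma} needs no such restriction. (In context this is harmless since the lemma is only used in Case~2, but as stated your proof is conditional where the paper's is not.)

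There is also one genuine, though easily patched, gap: the step from $\|\eta\sum_k\nabla g_{\cS\bot}(\vv^k)\|\ge\|\vv^{\sK}\|-3B/32$ to a squared lower bound $(\|\vv^{\sK}\|-3B/32)^2$ is only valid when $\|\vv^{\sK}\|\ge 3B/32$. You should either write $\max(0,\|\vv^{\sK}\|-3B/32)^2$ throughout, or split into the two cases: when $\|\vv^{\sK}\|<3B/32$ you simply drop the $\cS^\bot$ sum and note $\|\y^{\sK}\|^2\ge(29B/32)^2-(3B/32)^2=832B^2/1024$, which already dominates $169B^2/512$. Without this remark, the line ``the numerator is at least $(29B/32)^2-2(3B/32)(101B/100)+(3B/32)^2$'' does not follow from what precedes it in all cases. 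The paper's combined Cauchy--Schwarz sidesteps this entirely, which is part of why it is the cleaner route despite its slightly weaker constant.
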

\begin{proof}[Proofs of Lemma \ref{sk2 lemma}]
From the update rule of SGD, we have
\begin{eqnarray}
&&\left\|\eta\sum_{k=0}^{\sK-1}  \left(\nabla g_{\cS\bot}\left(\vv^{k}\right) +  \nabla g_{\cS} \left(\y^k\right)\right) \right\|=\left\|-\eta\sum_{k=0}^{\sK-1}  \left(\nabla g_{\cS\bot}\left(\vv^{k}\right) +  \nabla g_{\cS} \left(\y^k\right)\right) \right\|\notag\\
&\overset{a}{=}&\left\|\vv^{\sK} - \vv^{0}   + \eta\sum_{k=0}^{\sK-1} \left(\bxi_{\vv}^{k+1}-  \nabla g_{\cS\bot} (\vv^k)  + \nabla_{\vv} f(\x^k)\right)  + \y^{\sK} - \y^{0} \right\|\notag\\
&\overset{b}\geq&\left\|  \vv^{\sK}  - \vv^{0}  + \eta\sum_{k=0}^{\sK-1} \bxi_{\vv}^{k+1}   +  \left(\uu^{\sK} - \uu^{0}\right) -\left( \z^{\sK} - \z^{0}\right)   \right\|\notag\\ &&-\left\|\eta \sum_{k=0}^{\sK-1} \left(\nabla g_{\cS\bot} \left(\vv^k\right)  - \nabla_{\vv} f\left(\x^k\right)\right) \right\| \notag\\
&\overset{c}\geq&\left\|  \x^{\sK}  - \x^{0}\right\|   - \left\|\z^{\sK} - \z^{0} \right\| -\eta \left\| \sum_{k=1}^{\sK} \bxi_{\vv}^k  \right\| - \frac{\eta K_0 \rho B^2}{2}\notag\\
&\overset{\eqref{parametersetting}}\geq&\left\|  \x^{\sK}  - \x^{0}\right\| - \left\|\z^{\sK} - \z^{0} \right\| -\frac{B}{32} - \eta \left\| \sum_{k=1}^{\sK} \bxi_{\vv}^k \right\|.,
\end{eqnarray}
where  $\overset{a}=$ uses $\vv^k = \vv^{k-1} - \eta\bxi^{k+1}_\vv-\eta \nabla_{\vv} f(\x^k)$ and $\y^{k} = \y^{k-1} - \eta\nabla g_{\cS} \left(\y^k\right) $, $\overset{b}\geq$ uses $\z^{\sK} = \uu^{\sK} - \y^{\sK}$, $\z^0 = \uu^0 = \y^0=\mathbf{0}$, and triangle inequality, $\overset{c}\geq$ uses \eqref{basic1}.

From \eqref{z4}, with probability at least $1- \frac{1}{12}p$, we have  $\left\|\z^{\sK} - \z^{0} \right\|\leq \frac{3B}{32}$.
By the Vector-Martingale Concentration Inequality in Theorem \ref{proh}, we have with probability $1 - p/12$,  
\begin{eqnarray}\label{probability4}
\left\| \eta\sum_{k=1}^{\sK} \bxi_{\vv}^k \right\| =\left\|  \eta\sum_{k=1}^{K_0} \left(\bxi_{\vv}^k\cdot \cI_{ k\leq \sK}\right)\right\|\overset{a}{\leq} 2\eta \sigma\sqrt{ K_0 \log(48/p)} \overset{\eqref{parametersetting}}{\leq} \frac{B}{16},
\end{eqnarray} 
where $\overset{a}\leq$ uses $ \cI_{ k\leq \sK}$ is measurable on $\cF^{k-1}$ and $\|\bxi_{\vv}^k\|\leq \sigma$.
We obtain
\begin{eqnarray}\label{2end2}
\left\|\eta\sum_{k=0}^{\sK-1}  \left( \nabla g_{\cS\bot} \left(\vv^k\right) +  \nabla g_{\cS} \left(\y^k\right)\right)\right\|\geq\left\|  \x^{\sK}  - \x^{0}\right\| - \frac{3B}{16}. 
\end{eqnarray}
So with probability $1-p/6$, if    $\x^k$ exits $\cB\left(\x^{\cK},B\right)$ in $K_0$ iterations,   we  have
\begin{eqnarray}
&&\eta \sum_{k = 0}^{\sK-1}  \left\| \nabla g_{\cS\bot}\left(\vv^{k}\right)  \right\|^2+\eta\sum_{k=0}^{\sK-1}\left\|  \nabla  g_{\cS} \left(\y^k\right)\right\|^2 \notag\\ 
&\overset{a}{\geq}&  \frac{1}{2 \eta\sK}\left\| \eta \sum_{k = 0}^{\sK-1} \left(  \nabla g_{\cS\bot}\left(\vv^{k}\right)  +   \nabla  g_{\cS} \left(\y^k\right)\right)  \right\|^2 \notag\\
&\overset{\eqref{2end2}}{\geq}&\frac{169B^2}{512 \eta\sK}\overset{\sK \leq K_0}{\geq} \frac{169B^2}{512\eta K_0},
\end{eqnarray}
where $\overset{a}\geq$ uses
$$ \left\|\sum_{i=1}^l \a_i \right\|^2 \leq l \sum_{i=1}^l \left\|\a_i\right\|^2,  $$
holds for all $l\geq 1$.
\end{proof}

\item Now, we have all the ingredients necessary to prove Proposition \ref{faster descent}:
\begin{proof}[Proofs of Proposition \ref{faster descent} in Case \ref{case12}]
We first bound the noise term $\sum_{k = 1}^{\sK} \<  \nabla g_{\cS\bot}\left(\vv^{k-1}\right),  \bxi_{\vv}^k\>$.  We have for all $k$ from $1$ to $K_0$
\begin{eqnarray}
\EE   \left[\eta\<\nabla g_{\cS\bot}\left(\vv^{k-1}\right),  \bxi_{\vv}^k\> \cdot\cI_{k\leq \sK}\mid \cF^{k-1}\right] = \mathbf{0},
\end{eqnarray}	
 From \eqref{boundnabla}, and $\|\bxi_{\vv}^k \|\leq \sigma$, we have
\begin{eqnarray}
\left|-\eta\<\nabla g_{\cS\bot}\left(\vv^{k-1}\right),  \bxi_{\vv}^k\> \cdot\cI_{k\leq \sK}\right|\leq \frac{11\eta\sigma^2}{2},
\end{eqnarray}
and 
\begin{eqnarray}
\EE   \left|\eta\<\nabla g_{\cS\bot}\left(\vv^{k-1}\right),  \bxi_{\vv}^k\> \cdot\cI_{k\leq \sK}\mid \cF^{k-1}\right|^2\leq   \eta^2 \sigma^2\cI_{k\leq \sK} \left\|g_{\cS\bot}\left(\vv^{k-1}\right) \right\|^2
\end{eqnarray}
by Data-Dependent Berinstein inequality in Theorem  \ref{proh2} with $\delta =\frac{p}{3\log(K_0)}$, we have with probability at least $1 - \frac{p}{3}$,
\begin{eqnarray}\label{probability6}
 &&\sum_{k=1}^{K_0} \left\{-\eta\<\nabla g_{\cS\bot}\left(\vv^{k-1}\right),  \bxi_{\vv}^k\> \cdot\cI_{k\leq \sK}\right\}\\
 &\leq& \max\left\{11\eta\sigma^2\cdot\log\left(\frac{3\log(K_0)}{p}\right), 4\sqrt{\eta^2\sigma^2\sum_{k=0}^{\sK-1}\left\|\nabla g_{\cS\bot}\left(\vv^{k}\right) \right\|^2   }\cdot \sqrt{\log\left(\frac{3\log(K_0)}{p}\right)} \right\} .\notag 
\end{eqnarray}
With the parameter set in \eqref{parametersetting}, we have
\begin{eqnarray}\label{add2}
11\eta\sigma^2\cdot\log\left(\frac{3\log(K_0)}{p}\right)\leq \frac{B^2}{100\eta K_0},
\end{eqnarray}
and 
\begin{eqnarray}\label{add1}
&&4\eta\sqrt{\sigma^2\sum_{k=0}^{\sK-1}\left\|\nabla g_{\cS\bot}\left(\vv^{k-1}\right) \right\|^2   }\cdot \sqrt{\log\left(\frac{3\log(K_0)}{p}\right)}\notag\\
&\overset{a}{\leq}& 32\log\left(\frac{3\log(K_0)}{p}\right)\eta\sigma^2 + \frac{\eta}{8}\sum_{k=0}^{\sK-1}\left\|\nabla g_{\cS\bot}\left(\vv^{k}\right) \right\|^2 \notag\\
&\overset{\eqref{parametersetting}}{\leq}&   \frac{B^2}{32\eta K_0} +\frac{\eta}{8}\sum_{k=0}^{\sK-1}\left\|\nabla g_{\cS\bot}\left(\vv^{k}\right) \right\|^2,
\end{eqnarray}
where  $\overset{a}\leq$ uses $\sqrt{ab}\leq \frac{a+b}{2}$ for $a\geq0$.
Substituting \eqref{add2} and \eqref{add1} into \eqref{probability6},  with probability at least $1-3/p$, we have
\begin{eqnarray}\label{add4}
 &&\sum_{k=1}^{\sK} \left\{-\eta\<\nabla g_{\cS\bot}\left(\vv^{k-1}\right),  \bxi_{\vv}^k\>\right\}\leq \frac{B^2}{32\eta K_0} +\frac{\eta}{8}\sum_{k=0}^{\sK-1}\left\|\nabla g_{\cS\bot}\left(\vv^{k}\right) \right\|^2.
\end{eqnarray}
Fusing \eqref{add4} with \eqref{end2} in Lemma \ref{sufficient descent1}, using $\frac{7}{8} - \frac{1}{8}\leq \frac{25}{32}$, we have with probability  at least $1- \frac{7}{12}p$ (\eqref{probability1}, \eqref{probability2}, \eqref{probability3}, and \eqref{probability6} happen),
\begin{eqnarray}
f\left(\x^{\sK}\right)&\leq& f\left(\x^{0}\right)  -   \left(\frac{3}{256}+ \frac{1}{80}+ \frac{1}{32}\right) \frac{B^2}{\eta K_0} - \frac{3\eta}{4}  \sum_{k = 0}^{\sK-1}  \left\| \nabla g_{\cS\bot}\left(\vv^{k}\right)  \right\|^2 -\frac{3\eta}{4}\sum_{k=0}^{\sK-1}\left\|  \nabla  g_{\cS} \left(\y^k\right)\right\|^2. \notag
\end{eqnarray}
	
Finally, applying Lemma \ref{sk2 lemma}, if $\x^k$ moves out of the ball in $K_0$ iteration,  with probability at least $1-\frac{2}{3}p$ (\eqref{probability1}, \eqref{probability2}, \eqref{probability3}, \eqref{probability4}, and \eqref{probability6} happen),  we have 
\begin{eqnarray}
f\left(\x^{\sK_0}\right)  - f\left(\x^{0}\right)\leq -\left(  \frac{3}{4}\cdot\frac{169}{512}  -\frac{3}{256}- \frac{1}{80}- \frac{1}{32} \right) \frac{B^2}{\eta K_0}\leq -\frac{B^2}{7\eta K_0}.
\end{eqnarray}

Combining Case \ref{case11} and Case \ref{case12}, we obtain Proposition \ref{faster descent}.
\end{proof}

\end{enumerate}

\section{Deferred Proofs of Part \mbox{III}: Finding SSP}\label{proofthree}
\begin{proof}[Proofs of Proposition \ref{local convergence}]
Clearly, under the random event $\cH_0$ in Part \mbox{I} happens,  we know that if $\lambda_{\min} \nabla f(\x^{0}) \leq  -\delta_2$,   $\x^{k}$ must gone out of the ball. Thus with probability at least $1-p/3$ (the random events $\cH_0$ in Part \mbox{I} happens),  if $\x^k$ does not move out the ball in $K_0$ steps,  we have $\lambda_{\min} \left(\nabla f(\x^{0})\right) \geq  -\delta_2$.  Using  that $f(\x)$ has continuous Hessian, we have 
\begin{equation}
\!\lambda_{\min} \left(\nabla f(\bbx)\right) \geq \lambda_{\min} \left(\nabla f(\x^{0})\right) - \rho \left\| \bbx - \x^{0}  \right\|_2 \geq -\delta_2 - \frac{\rho}{K_0}\sum_{k=0}^{K_0-1} \left\| \bbx - \x^{0}  \right\|  \geq -\frac{17}{16} \delta_2  =-17 \delta.
\end{equation}

To a give upper bound on the $\|\nabla f(\bbx)\|^2$, we follow the idea by considering quadratic   approximations in Part \mbox{II}. We have
\begin{eqnarray}
\left\| \nabla g ( \bbx)  \right\| &\overset{a}{=}&   \left\|\frac{1}{K_0}\sum_{k=0}^{K_0-1}  \nabla g(\x^k)  \right\| \notag\\
&\leq& \left\|\frac{1}{K_0}\sum_{k=0}^{K_0-1}  \nabla f(\x^k)  \right\|+ \frac{1}{K_0}\sum_{k=0}^{K_0-1} \left\| \nabla f(\x^k)  - \nabla g(\x^k)  \right\|  \notag\\
&=&\frac{1}{K_0\eta}\left\| \x^{K_0-1}  - \x^{0} - \eta \sum_{k=1}^{K_0} \bxi^k \right\|+ \frac{1}{K_0}\sum_{k=0}^{K_0-1}\left\| \nabla f(\x^k)  - \nabla g(\x^k)  \right\| \notag\\
&\overset{\eqref{basic1}}{\leq}&\frac{1}{K_0\eta}\left\| \x^{K_0-1}  - \x^{0} \right\| + \frac{1}{K_0}\left\| \sum_{k=1}^{K_0} \bxi^k \right\| + \frac{\rho}{2}B^2\notag\\
&\leq& \frac{B}{K_0 \eta} + \frac{\rho B^2}{2} + \frac{1}{K_0}\left\|  \sum_{k=1}^{K_0} \bxi^k \right\|\notag\\
&\leq&\left(\frac{16}{\tC_1}+ \frac{1}{2} \right) \rho B^2+ \frac{1}{K_0}\left\|  \sum_{k=1}^{K_0} \bxi^k \right\|,
\end{eqnarray}
where in $\overset{a}\leq$, we use the gradient of the quadratic function $g(\cdot)$ is a linear mapping.

By the Vector-Martingale Concentration Inequality, we have with probability $1 - 2p/3$,  
\begin{eqnarray}\label{probability5}
\frac{1}{K_0} \left\| \sum_{k=1}^{K_0} \bxi^k\right\|\leq 2 \sigma\sqrt{ K_0 \log(6/p)}/K_0\leq \rho B^2.
\end{eqnarray}

Using $\| \bbx - \x^{0}\| \leq B$, we have $\left\| \nabla f ( \bbx)  \right\| \leq \left\| \nabla g ( \bbx)  \right\| + \frac{\rho B^2}{2}\leq 18\rho B^2  $.

In all, we have with probability at least $1-p$,  $\lambda_{\min} \left(\nabla f(\bbx)\right)  \geq  - 17\delta$, and $\left\| \nabla f ( \bbx)  \right\|  \leq 18\rho B^2$.
\end{proof}

\begin{proof}[Proofs of Theorem \ref{theo:main}]	
By union bound, with probability at least $1 - T_1\cdot p$,  if at step $T_0 = T_1 \cdot K_0$, Algorithm  \ref{algo:SGD} has not stopped,  $\x^k$ must have moved out of the ball  at least $T_1$ times, then from Proposition \ref{faster descent}, the function values shall decrease at least 
 $$T_1 \cdot  \frac{B^2}{7\eta K_0} \geq \Delta +  \frac{B^2}{7\eta K_0} >\Delta.  $$
Contradiction with Assumption \ref{assu:main}. Thus with probability at least $1 - T_1\cdot p$,  Algorithm \ref{algo:SGD} shall stop before $T_0 $ steps. Further, fusing with Proposition \ref{local convergence},  we have  with probability at least $1 - (T_1+1)\cdot p$, Algorithm  \ref{algo:SGD}  outputs a second-order stationary point satisfying \eqref{ESSP} in $T_0$ steps.
	
\end{proof}

\section{Proof of Proposition \ref{prop:noise}}\label{sec:proof,prop:noise}
\begin{proof}[Proof of Proposition \ref{prop:noise}]
	\begin{enumerate}[(i)]
		\item
		Recall the multivariate gaussian noise $\tilde\bxi = \sigma/\sqrt{d} * \bchi$ where $\bchi \sim N(0, \Ib_d)$.
		We show that it satisfies \eqref{smallprob1}. Clearly, it satisfies \eqref{bound2}.
		
		Let $\v$ be an arbitrary unit vector, and due to symmetry in below we assume WLOG $\v = \e_1$.
		Recall we have set $\cA$ satisfying the $(q^*,\v)$-narrow property in Definition \ref{defi:qnarrow}.
		Then
		$$
		\left\{\u + q \e_1: \u\in \cA,\
		q \in \left[ q^*, \infty \right)
		\right\}
		\subseteq \cA^c
		.
		$$
		If set $\cA$ contains no points of $\u,\u+q\e_1$ for each $q\ge q^*$, then $\cA[\bullet,\a_{\backslash 1}] := \{a_1: (a_1,\a_{\backslash 1})^\top \in \cA\}$ is a subset of $\RR$ and has Lebesgue measure $\le 1.1q^*$.
		This is because that for any given $\a_{\backslash 1}=(a_2,\dots,a_d)$ there exists an $a_1^*$ such that $(a_1^*,\a_{\backslash 1})^\top\in \cA$ and we pick $a_1^*$ to be the infimum of such.  Then it is easy to conclude that $(a_1^*+q,\a_{\backslash 1})^\top \in \cA^c$ for any $q > 1.1q^*$, and that
		$$
		\cA[\bullet,\a_{\backslash 1}]
		\subseteq
		[a_1^*, a_1^* + 1.1q^*]
		.
		$$
		Therefore we have for any $\cA$ admitting $(q^*,\v)$-narrow property where $q^* = (\sigma/4\sqrt{d})$, that for any given $\bchi_{\backslash 1}$,
		\begin{align*}
		\PP(\sigma/\sqrt{d} * \bchi_1 \in \cA[\bullet,\bchi_{\backslash 1}] \mid \bchi_{\backslash 1}) 
		&\le
		\frac{1}{\sqrt{2\pi}} \int_{(4q^*)^{-1} \cA[\bullet,\bchi_{\backslash 1}]} \exp(-z^2/2) dz
		\\&\le
		\frac{1.1q^*}{4q^*} \cdot \frac{1}{\sqrt{2\pi}}
		<
		\frac14
		,
		\end{align*}
		where $\cA[\bullet,\bchi_{\backslash 1}]$ is of Lebesgue measure $\le 1.1q^*$.
		Taking expectation again gives
		$$
		\PP(\sigma/\sqrt{d} * \bchi \in \cA)
		=
		\EE\left[ \PP(\sigma/\sqrt{d} * \bchi_1 \in \cA[\bullet,\bchi_{\backslash 1}] \mid \bchi_{\backslash 1}) \right]
		\le \frac14
		,
		$$
		and we complete the proof that $\bxi = \sigma/\sqrt{d} * \bchi$ is $\v$-disperse for any $\v$.

		\item
		For example, recall the uniform ball-shaped noise $\tilde\bxi = \sigma * \bxi_b$, where $\bxi_b$ is uniformly sampled from $\cB^d$, the unit ball centered at ${\bf 0}$.
		We prove that \eqref{smallprob1} holds in this case.
		Assume once again that $\v = \e_1$ because of symmetry.
		Using classical results in Multivariate Calculus (or see \citet{jin2017escape}) and $(q^*,\v)$-narrow property property in Definition \ref{defi:qnarrow} of set $\cA^*$ we have
		\beq\label{Aprop_prim}
		\PP(\sigma * \bxi_b \in \cA)
		=
		\frac{Vol_d((\sigma^{-1}\cA) \cap \cB^d)}{Vol_d(\cB^d)}
		\le
		\frac{q^*}{\sigma}\cdot \frac{Vol_{d-1}(\cB^{d-1})}{Vol_d(\cB^d)}
		.
		\eeq
		It is well known that the $d$-dimensional unit ball $\cB^d$ of $\RR^d$ has volume being
		$$
		Vol_d(\cB^d)
		=
		\frac{\pi^{d/2}}{\Gamma\left(\frac{d}{2} + 1\right)}
		,
		$$
		and analogously for $\cB^{d-1}$.
		We have
		\begin{align*}
		\frac{Vol_{d-1}(\cB^{d-1})}{Vol_d(\cB^d)}
		&=
		\frac{\pi^{(d-1)/2}}{\Gamma\left(\frac{d-1}{2} + 1\right)} \cdot
		\frac{\Gamma\left(\frac{d}{2} + 1\right)}{\pi^{d/2}}
		=
		\frac{\Gamma\left(\frac{d+1}{2}+\frac12\right)}{\pi^{1/2} \Gamma\left(\frac{d+1}{2}\right)}
		\le
		\sqrt{\frac{d+1}{2\pi}}
		\le
		\sqrt{d}
		,
		\end{align*}
		where we applied a well-known fact that $\Gamma(x+1/2) \le \Gamma(x) \sqrt{x}$ for all $x > 0$.
		Plugging in the definition $q^* := \sigma / 4\sqrt{d}$ in \eqref{Aprop_prim}, we have proved \eqref{smallprob1} that $\tilde\bxi$ is $\v$-disperse for any $\v$.
			\item
		For stochastic gradients injected by artificial, dispersive noise, we prove that the $\v$-disperse property still holds.
		Let $\tilde{\bm\gamma}$ be some artificial noise that has the $\v$-dispersive property, that is, for an arbitrary set $\cA$ with $(q^*,\v)$-narrow property, where $q^* = \sigma / 4\sqrt{d}$.
		Then as in Definition \ref{defi:qnarrow} one has, by the linearly scalable property after Definition \ref{defi:qnarrow}, that $\PP\left( \tilde{\bm\gamma} \in \cA - \g \right) \le 1/4$ for any fixed vector $\g\in\RR^d$.
		Then we have by injecting such independent noise to the stochastic gradient $\nabla f(\w;\bzeta)$ that
		\begin{align*}
		\PP\left( \nabla f(\w;\bzeta) + \tilde{\bm\gamma} \in \cA \mid \nabla f(\w;\bzeta) \right)
		&=
		\PP\left( \tilde{\bm\gamma} \in \cA - \nabla f(\w;\bzeta)  \,\big|\, \nabla f(\w;\bzeta) \right) 
		\le
		\frac14
		,
		\end{align*}
		where in the last step we used the independence of $\tilde{\bm\gamma} \in \cA$ and $\nabla f(\w;\bzeta)$.
		Taking expectation in the last line gives
		\beq\label{smallprob_pri}
		\PP\left( \nabla f(\w;\bzeta) + \tilde{\bm\gamma} \in \cA  \right)
		\le \frac14
		,
		\eeq
			so \eqref{smallprob1} is satisfied for this noise-injected stochastic gradient $\nabla f(\w;\bzeta) + \tilde{\bm\gamma}$.

	\end{enumerate}

\end{proof}

\end{document}